\def\today{\ifcase \month \or
	January \or February \or March \or April \or
	May \or June \or July \or August \or
	September \or October \or November \or December \fi
	\space\number\day , \number\year}
\begin{document}
	

	\newcommand{\bfi}{\bfseries\itshape}
	
	\newtheorem{theorem}{Theorem}
	\newtheorem{acknowledgment}[theorem]{Acknowledgment}
	\newtheorem{corollary}[theorem]{Corollary}
	\newtheorem{definition}[theorem]{Definition}
	\newtheorem{example}[theorem]{Example}
	\newtheorem{lemma}[theorem]{Lemma}
	\newtheorem{notation}[theorem]{Notation}
	\newtheorem{problem}[theorem]{Problem}
	\newtheorem{proposition}[theorem]{Proposition}
	\newtheorem{question}[theorem]{Question}
	\newtheorem{remark}[theorem]{Remark}
	\newtheorem{setting}[theorem]{Setting}
	
	\numberwithin{theorem}{section}
	\numberwithin{equation}{section}

\renewcommand{\1}{{\bf 1}}
\newcommand{\Ad}{{\rm Ad}}
\newcommand{\Aut}{{\rm Aut}\,}
\newcommand{\ad}{{\rm ad}}
\newcommand{\botimes}{\bar{\otimes}}
\newcommand{\Ci}{{\mathcal C}^\infty}
\newcommand{\Cl}{{\rm Cl}\,}
\newcommand{\card}{{\rm card}}
\newcommand{\clgth}{\text{\rm c-length}\,}
\newcommand{\cortex}{{\rm Cor}}
\newcommand{\Der}{{\rm Der}\,}
\newcommand{\de}{{\rm d}}
\newcommand{\dr}{{\rm dr}\,}
\newcommand{\ee}{{\rm e}}
\newcommand{\End}{{\rm End}\,}
\newcommand{\flgth}{\text{\rm f-length}\,}
\newcommand{\id}{{\rm id}}
\newcommand{\ie}{{\rm i}}
\newcommand{\Index}{{\rm Index}}
\newcommand{\GL}{{\rm GL}}
\newcommand{\Gr}{{\rm Gr}}
\newcommand{\Hom}{{\rm Hom}\,}
\newcommand{\Ind}{{\rm Ind}}
\newcommand{\ind}{{\rm ind}\,}
\newcommand{\Ker}{{\rm Ker}\,}
\newcommand{\lgth}{{\rm length}\,}
\newcommand{\Prim}{{\rm Prim}}
\newcommand{\pr}{{\rm pr}}
\newcommand{\Ran}{{\rm Ran}\,}
\newcommand{\RRa}{{\rm RR}}
\newcommand{\rank}{{\rm rank}\,}
\renewcommand{\Re}{{\rm Re}\,}
\newcommand{\SO}{{\rm SO}\,}
\newcommand{\sa}{{\rm sa}}
\newcommand{\spa}{{\rm span}\,}
\newcommand{\tsr}{{\rm tsr}}
\newcommand{\Tr}{{\rm Tr}\,}

\newcommand{\CC}{{\mathbb C}}
\newcommand{\HH}{{\mathbb H}}
\newcommand{\RR}{{\mathbb R}}
\newcommand{\TT}{{\mathbb T}}

\newcommand{\Ac}{{\mathcal A}}
\newcommand{\Bc}{{\mathcal B}}
\newcommand{\Cc}{{\mathcal C}}
\newcommand{\Ec}{{\mathcal E}}
\newcommand{\Fc}{{\mathcal F}}
\newcommand{\Gc}{{\mathcal G}}
\newcommand{\Hc}{{\mathcal H}}
\newcommand{\Ic}{{\mathcal I}}
\newcommand{\Jc}{{\mathcal J}}
\newcommand{\Kc}{{\mathcal K}}
\newcommand{\Lc}{{\mathcal L}}
\renewcommand{\Mc}{{\mathcal M}}
\newcommand{\Nc}{{\mathcal N}}
\newcommand{\Oc}{{\mathcal O}}
\newcommand{\Pc}{{\mathcal P}}
\newcommand{\Sc}{{\mathcal S}}
\newcommand{\Tc}{{\mathcal T}}
\newcommand{\Uc}{{\mathcal U}}
\newcommand{\Vc}{{\mathcal V}}
\newcommand{\Xc}{{\mathcal X}}
\newcommand{\Yc}{{\mathcal Y}}
\newcommand{\Wc}{{\mathcal W}}
\newcommand{\Zc}{{\mathcal Z}}

\newcommand{\ag}{{\mathfrak a}}
\newcommand{\fg}{{\mathfrak f}}
\renewcommand{\gg}{{\mathfrak g}}
\newcommand{\hg}{{\mathfrak h}}
\newcommand{\kg}{{\mathfrak k}}
\newcommand{\mg}{{\mathfrak m}}
\renewcommand{\ng}{{\mathfrak n}}
\newcommand{\pg}{{\mathfrak p}}
\newcommand{\sg}{{\mathfrak s}}
\newcommand{\zg}{{\mathfrak z}}

\newcommand{\ZZ}{\mathbb Z}
\newcommand{\NN}{\mathbb N}

\title[The isomorphism problem for $C^*$-algebras of Lie groups]{On the isomorphism problem for $C^*$-algebras of nilpotent Lie groups}
\author{Ingrid Belti\c t\u a and Daniel Belti\c t\u a}
\address{Institute of Mathematics ``Simion Stoilow'' 
	of the Romanian Academy, 
	P.O. Box 1-764, Bucharest, Romania}
\email{ingrid.beltita@gmail.com, Ingrid.Beltita@imar.ro}
\email{beltita@gmail.com, Daniel.Beltita@imar.ro}

\thanks{This research was partly  supported by Project MTM2013-42105-P, fondos FEDER, Spain.}
\date{\today}

\begin{abstract} 
We investigate    
to what extent a nilpotent Lie  group is determined by its $C^*$-algebra. 
We prove that, within the class of exponential Lie groups, direct products of Heisenberg groups with abelian Lie groups are uniquely determined even by their unitary dual, while nilpotent Lie groups of dimension $\le 5$ are uniquely determined by the Morita equivalence class of their $C^*$-algebras.
We also find that this last property is shared by the filiform Lie groups and by the $6$-dimensional   free two-step nilpotent Lie group. \\
\textit{2000 MSC:} Primary 22D25; Secondary 22E27, 22E25, 17B30\\
\textit{Keywords:} unitary dual, nilpotent Lie group, $C^*$-algebra, Morita equivalence, $C^*$-rigidity.

\end{abstract}

\maketitle

\section{Introduction}

The main results of the present paper are related to 
the following inverse problem in noncommutative harmonic analysis:  
To what extent a locally compact group is determined by its representation theory?  
If $G$ is a locally compact group, then important information on its representation theory is encoded by the natural topology of its unitary dual $\widehat{G}$, that is, the set of all equivalence classes of unitary irreducible representations of $G$. 
When  $G$ is type $I$, additional information is encoded by the group $C^*$-algebra of~$G$, 
denoted by~$C^*(G)$, whose space of primitive ideals is canonically homeomorphic to the dual space $\widehat{C^*(G)}$, which is further homeomorphic to~$\widehat{G}$. 
The \emph{isomorphism problem} referred to in the title may be stated as follows: If $G_1$ and $G_2$ are locally compacr groups whose $C^*$-algebras are isomorphic, are $G_1$ and $G_2$ necessarily isomorphic as locally compact  groups?

Locally compact groups cannot, in general,  be recovered from their $C^*$-algebras.
For instance, for all compact infinite Lie groups (or even compact infinite metrizable groups), their unitary dual spaces are countably infinite discrete topological spaces, 
hence are homeomorphic to each other. 
As commutative $C^*$-algebras having homeomorphic spaces of primitive ideals are $*$-isomorphic, 
it then follows that, in particular,  
\emph{the $C^*$-algebras of all compact abelian Lie groups of dimension $\ge1$ are mutually $*$-isomorphic}, hence for this class of Lie groups even the dimension of a group cannot be read off from its $C^*$-algebra. 
This phenomenon is not confined to compact groups, as several examples of non-isomorphic exponential Lie groups having isomorphic $C^*$-algebras were pointed out in \cite{LiLu13} and \cite{BB17b}. 

Nevertheless, it is rather easy to see that abelian simply connected Lie groups are uniquely determined by their $C^*$-algebras, within the class of the connected simply connected abelian Lie groups. 
Moreover, as we show below (Lemmma~\ref{abelian})
the unitary dual   distinguishes these abelian  groups within the class of the connected simply connected exponential Lie groups.

A natural question, that we  address in this paper, can be then stated as follows:
If $G_1$ and $G_2$ are 
\textit{nilpotent Lie groups} whose $C^*$-algebras are isomorphic, are $G_1$ and $G_2$ necessarily isomorphic as Lie groups? 
This open problem  was also stated in \cite{LiLu13}.  
Since we want to determine groups by their unitary representation theory, it is more natural to study groups that have Morita equivalent $C^*$-algebras, not necessarily isomorphic. As noted  in \cite[pages 160--161]{Ro94}: 
\begin{quotation}
	``Two $C^*$-algebras are called strongly Morita equivalent if, roughly speaking, their $*$-representation theories are identical.
		Since the purpose of introducing $C^*(G)$ is 
		to understand the unitary representation theory of $G$, which is equivalent
		to the $*$-representation theory of $C^*(G)$, the natural goal in studying unitary
		representations of a group $G$ from Rieffel's point of view is thus to classify $C^*(G)$
	up to Morita equivalence.''
\end{quotation}

It is convenient to make the following definition. 

\begin{definition}
	\normalfont
	An exponential Lie group $G$ is called \emph{stably $C^*$-rigid} if for every other  exponential Lie group $H$ one has 
	$$G\simeq H\iff C^*(G), C^*(H) \text{ are Morita equivalent}.$$
\end{definition}

Thus the problem stated above can be replaced by a more general one: 
Are nilpotent Lie groups stably $C^*$-rigid?
Indeed, in this paper we indicate examples of noncommutative nilpotent Lie groups that are stably $C^*$-rigid. 
To this end we use a range of tools including 
notions of multiplicity of limit points (see \cite{Lu90}, \cite{ArSp96}, \cite{ArSoSp97}, \cite{AKLSS01}, and \cite{AaH12}) and 
of topological dimension (see \cite{BB16b} and \cite{BB17}) that are suitable for the study of noncommutative Lie groups and their $C^*$-algebras. 
We note that $C^*$-rigidity properties of discrete nilpotent groups 
 were recently obtained by completely different methods (see for instance \cite{ER18}).

\subsection*{Main results and structure of this paper}
The paper starts with Section~\ref{prelim}, which contains the needed preliminaries,  and also technical results 
on topological spaces. 
In particular, for a nilpotent Lie group $G$, we introduce the quantity  $\ind{G}$  that relates the 
dimension $k$ such that there is an open dense subset of $\hat G$ that is homeomorphic to an open  subset of $\RR^k$ and 
the maximal dimension of a coadjoint orbit of $G$.  
This quantity will turn to be important in the next sections. 
Also, we obtain some useful results on the maximal closed and Hausdorff subsets of locally quasi-compact 
topological spaces. 

Section~\ref{Sect3} contains  a detailed study
of set of characters of a nilpotent Lie group, as maximal closed and Hausdorff subsets of the unitary spectrum, 
and links the properties of this set with those of the real rank of the group $C^*$-algebra.

The first result on $C^*$-rigidity of the paper proves that, 
within the class of exponential  Lie groups, the groups that are direct product 
of a Heisenberg group with a vector space group are uniquely determined by their spectrum, 
which is  actually a stronger property than stably $C^*$-rigidity.

\begin{theorem}\label{2H}
	Let $G_1$ a nilpotent Lie group with Lie algebra $\gg_1$ and assume that
	$\dim [\gg_1, \gg_1]\le 1$. 
	Then
	for any exponential Lie group $G_2$, 
	$\widehat{G}_1$ is homeomorphic  
	to $\widehat{G}_2$ if and only if $G_1$ is isomorphic to $G_2$.
\end{theorem}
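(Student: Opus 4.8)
The plan is to recover the isomorphism type of $G_1$ from the topology of its dual. The forward implication is trivial, so I would assume $\widehat{G}_1$ is homeomorphic to $\widehat{G}_2$ with $G_2$ exponential. If $[\gg_1,\gg_1]=0$ the group is abelian and I would quote Lemma~\ref{abelian}; so the interesting case is $\dim[\gg_1,\gg_1]=1$. Here I would first record the structure: the commutator line $\mathfrak{z}$ is necessarily central, the bracket induces a nondegenerate alternating form on $\gg_1/\mathfrak{z}$, and diagonalizing it gives $\gg_1\cong\mathfrak{h}_n\oplus\RR^m$ with $\mathfrak{h}_n$ the $(2n+1)$-dimensional Heisenberg algebra and $\RR^m$ central, i.e. $G_1\cong\HH_n\times\RR^m$. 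By Stone--von Neumann and the product formula for duals this yields the explicit model
$$\widehat{G}_1\;\cong\;(\RR^*\times\RR^m)\ \sqcup\ \RR^{2n+m},$$
whose first piece (the representations $\pi_{\lambda,t}$) is an open dense Hausdorff manifold of dimension $m+1$ and whose second piece is the closed set of characters.

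Next I would extract three homeomorphism invariants and compute them for both groups. First, the character set is intrinsic: it is the set of points not Hausdorff-separated from some other point, and by the study of characters in Section~\ref{Sect3} it equals $\widehat{G/[G,G]}\cong\RR^{c}$ with $c=\dim G-\dim[G,G]$ for every exponential $G$; thus $c_2=2n+m$. Second, the dimension $k$ of the open dense manifold part is visibly invariant, so $k_2=m+1$. Third, the maximal coadjoint-orbit dimension is read off from the degeneration of the generic representations onto the non-Hausdorff locus — as $\lambda\to0$ the point $\pi_{\lambda,t}$ accumulates onto the whole $2n$-dimensional slice $\{(\xi,t):\xi\in\RR^{2n}\}$ — which is exactly the limit-point multiplicity measured by $\ind G$ in Section~\ref{prelim}, giving the value $2n$. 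Combining these with the orbit picture $\widehat{G}\cong\gg^*/\mathrm{Ad}^*$, valid for exponential groups, the identity $\dim G=k+(\text{max orbit dim})$ yields $\dim G_2=2n+m+1$ and hence $\dim[\gg_2,\gg_2]=\dim G_2-c_2=1$.

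The hard part will be to exclude the non-nilpotent exponential groups sharing these numbers, such as $\mathrm{Aff}(\RR)\times\RR^{2n+m-2}$, which has the same $c$, $k$ and $\dim$. My plan for this is to use the $T_1$ separation axiom. Since every coadjoint orbit of a nilpotent group is closed, $\widehat{G}_1$ is $T_1$, and $T_1$-ness passes across the homeomorphism to $\widehat{G}_2$. If $\gg_2$ were not nilpotent then, because $\dim[\gg_2,\gg_2]=1$, some $X$ would act on the commutator line by a nonzero real weight, $[X,z]=\alpha z$; the coadjoint orbit of any $\ell$ with $\langle\ell,z\rangle\neq0$ would then have its $z$-component rescaled by $\ee^{\alpha s}$ along the flow of $X$, so its closure would meet but not contain the hyperplane $\langle\,\cdot\,,z\rangle=0$, producing a non-closed orbit and violating $T_1$. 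This contradiction would force $G_2$ to be nilpotent.

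Finally, a nilpotent group with one-dimensional commutator is again of the form $\HH_{n'}\times\RR^{m'}$ by the structure statement of the first paragraph, and matching the recovered invariants — maximal orbit dimension $2n'=2n$ and total dimension $2n'+m'+1=2n+m+1$ — would give $n'=n$ and $m'=m$, so $G_2\cong\HH_n\times\RR^m\cong G_1$, completing the argument.
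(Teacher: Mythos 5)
Your overall architecture (dispose of the abelian case via Lemma~\ref{abelian}, force $G_2$ to be nilpotent from the $T_1$ property, recover the character set as a topological invariant, then match parameters by invariance of domain) is close to the paper's, and the structure theorem $\gg_1\cong\hg_{2n+1}\oplus\RR^m$, the use of $\ind$, and the $T_1$/closed-orbit argument for nilpotency (the paper packages the latter as Lemma~\ref{GCR}) are all sound. But two of your three ``invariants'' rest on claims that are false or unproven, and they are exactly the points where the real work lies.

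First, the character set is \emph{not} ``the set of points not Hausdorff-separated from some other point'' for a general exponential (or even nilpotent) group, and Section~\ref{Sect3} does not assert this. For the filiform group $F_4$, the intermediate stratum $D_2\setminus D_1\simeq\RR^\times$ consists of non-characters each of which is a limit of nets from the dense stratum having at least two limit points (this is precisely hypothesis (b) of Lemma~\ref{L5b}, verified in the proof of Lemma~\ref{L5}), so these are non-separated points lying outside $[\fg_4,\fg_4]^\perp$. Since $G_2$ is a priori unknown, you cannot conclude that the non-separated locus of $\widehat{G}_2$ is its character set, and your computation $c_2=2n+m$, hence $\dim[\gg_2,\gg_2]=\dim G_2-c_2$, collapses. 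The paper's substitute is the machinery of $Q(\widehat{G})$: $[\gg,\gg]^\perp$ is always a maximal closed connected Hausdorff subset (Proposition~\ref{maximal}), and it is the \emph{unique} maximal one when all coadjoint orbits are flat (Proposition~\ref{P3}) — which holds for $G_1=H_{2n+1}\times A_m$ — so Lemma~\ref{L2} forces $\psi([\gg_1,\gg_1]^\perp)=[\gg_2,\gg_2]^\perp$. Second, your claim that the maximal coadjoint-orbit dimension of $G_2$ can be ``read off'' as the dimension of the limit set of the degenerating family is not a theorem anywhere in the paper (nor in general: the proof of Lemma~\ref{30december2017} only guarantees the limit set contains a set of the form $q(\eta+\mg^\perp)$, whose dimension is governed by a polarization, not by the orbit dimension), so $\dim G_2$ and hence $\dim[\gg_2,\gg_2]$ remain undetermined on your route. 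The paper sidesteps this entirely with a connectedness trick you are missing: $\widehat{G}_1\setminus[\gg_1,\gg_1]^\perp\simeq\widehat{A_{k_1}}\times\RR^\times$ is disconnected, whereas if $\dim[\gg_2,\gg_2]\ge 2$ then $\gg_2^*\setminus[\gg_2,\gg_2]^\perp$ is connected and so is its image under the open quotient map $q$, contradicting $\psi(\Gamma_1^{(1)})=\Gamma_1^{(2)}$. That forces $\dim[\gg_2,\gg_2]\le1$ directly, after which invariance of domain applied to $\widehat{A_{k_j}}\times\RR^\times$ finishes the count. A minor additional point: your numerical invariants in the second paragraph are computed with nilpotent-group tools (Lemma~\ref{27december2017-1555} is stated only for nilpotent groups) before nilpotency of $G_2$ is established in the third; this is repairable by reordering, as the paper does by invoking Lemma~\ref{GCR} at the outset.
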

The proof of Theorem~\ref{2H} is given in Section~\ref{proof-th}, and it is based on the characterization of the  set of characters of nilpotent Lie groups
performed in Section~\ref{Sect3}. 
A weaker result of this type for Heisenberg groups was established in \cite{BBL17}, 
which used however some structures that cannot be encoded by the group $C^*$-algebras. 
Using a completely different method we obtain this result on its natural level of generality in Theorem~\ref{2H}.

In Section~\ref{proof-cor} we analyze the nilpotent Lie groups of dimension $\le 5$. 
In general, in this case we need more than only the unitary dual to distinguish a group, namely we need to pinpoint a special open dense subset given by the spectrum of the largest bounded trace ideal in the $C^*$-algebra of the group.
 The result in this case is the following theorem.
\begin{theorem}\label{5D}
	Any nilpotent Lie group of dimension $\le 5$ is stably  $C^*$-rigid.
	\end{theorem}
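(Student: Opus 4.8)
The forward implication needs no argument: if $G\simeq H$ then $C^*(G)\cong C^*(H)$, so these algebras are Morita equivalent. Fix therefore a nilpotent Lie group $G$ with $\dim G\le 5$ and an exponential Lie group $H$ whose $C^*$-algebra is Morita equivalent to $C^*(G)$; the whole content is to manufacture an isomorphism $G\simeq H$. The plan is to read off from the Morita equivalence a short list of invariants and then to run through the (finite) classification of nilpotent Lie algebras of dimension $\le 5$, checking in each case that these invariants already force $H\simeq G$. A Morita equivalence induces, via the Rieffel correspondence of ideals, a homeomorphism $\widehat{H}\cong\widehat{G}$ of unitary duals (equivalently of primitive ideal spaces) that carries the largest bounded-trace ideal of one algebra onto that of the other, bounded trace being preserved by the correspondence. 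Thus I would have at my disposal two pieces of Morita-invariant data: the homeomorphism type of the dual, and the distinguished open dense subset of it that is the spectrum of the largest bounded-trace ideal.

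First I would extract the numerical invariants. From the topology of $\widehat{G}$ — specifically the dimension of its open dense Euclidean stratum, refined by the invariant $\ind G$ of Section~\ref{prelim} that ties this dimension to the maximal coadjoint orbit dimension — one recovers $\dim G$; since $\widehat{H}\cong\widehat{G}$, this gives $\dim H=\dim G\le 5$. Next, the set of characters of $G$ is characterized in Section~\ref{Sect3} as a maximal closed Hausdorff subset of $\widehat{G}$, and is thereby a purely topological, hence Morita-invariant, feature of the dual; so its dimension recovers $\dim(\hg/[\hg,\hg])=\dim(\gg/[\gg,\gg])$ and consequently $\dim[\hg,\hg]=\dim[\gg,\gg]$.

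With these invariants in hand I would split into cases along the classification of nilpotent Lie algebras with $\dim\gg\le 5$. If $\gg$ is abelian, Lemma~\ref{abelian} gives $H\simeq G$ outright. If $\dim[\gg,\gg]\le 1$ — the abelian, Heisenberg, and Heisenberg-times-abelian algebras — then Theorem~\ref{2H} applies word for word (the abelian subcase being also covered by Lemma~\ref{abelian}). The remaining algebras, those with $\dim[\gg,\gg]\ge 2$, are finite in number and of small dimension: the $4$-dimensional filiform $\ng_4$, its sum $\ng_4\oplus\RR$, the $5$-dimensional filiform, and the handful of further $5$-dimensional algebras. For each of these I would compute $\widehat{G}$ explicitly by the orbit method, locate the spectrum of the largest bounded-trace ideal by controlling the upper multiplicities of the limit points of $\widehat{G}$, and verify that the resulting pair (dual, distinguished subset) is realized by no other exponential Lie group of the same dimension; the filiform entries may alternatively be disposed of by the dedicated filiform rigidity argument.

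The main obstacle will be exactly this last case analysis. For the groups with $\dim[\gg,\gg]\ge 2$ the unitary dual by itself does not separate all the candidates — which is precisely why the bounded-trace refinement is introduced — so the crux is to show that the distinguished open dense subset, read together with the non-Hausdorff gluing of the generic stratum onto the character stratum, distinguishes any two non-isomorphic candidates and, crucially, cannot be reproduced by a non-nilpotent exponential Lie group of the same dimension. Pinning down the bounded-trace ideal in turn forces me to read off the upper multiplicity of each limit point from the coadjoint orbit geometry, and it is this multiplicity bookkeeping, carried out group by group, that I expect to be the technical heart of the proof.
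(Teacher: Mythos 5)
Your overall strategy (extract Morita-invariant data, then run through the classification) is the same as the paper's, but there is a genuine gap at the very first step: you assert that the homeomorphism $\widehat{H}\cong\widehat{G}$ yields $\dim H=\dim G$. It does not. What the topology of the dual gives you is $\ind G=\min_\xi\dim\gg(\xi)$ (Lemma~\ref{27december2017-1555}), i.e.\ the dimension of the generic Euclidean stratum, and (when the character set is pinned down) $\dim[\gg,\gg]^\perp=\RRa(C^*(G))$; but $\dim G=\ind G+d_{\max}$ where $d_{\max}$ is the maximal coadjoint orbit dimension, and $d_{\max}$ is not read off from the homeomorphism type of the dual. Without a bound on $\dim H$ your finite case analysis over the algebras of dimension $\le 5$ cannot close, since a priori $H$ could be a higher-dimensional nilpotent group. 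The paper obtains the bound by a three-step argument you do not have: (i) the class~$\Tc$ property (all nontrivial orbits of maximal dimension) is Morita-invariant because it is detected by finiteness of upper multiplicities (Proposition~\ref{P4}); (ii) for class~$\Tc$ algebras one has $\dim\hg\le\ind\hg+\dim[\hg,\hg]^\perp$ (Lemma~\ref{MD2}), which caps $\dim H$ at $6$; (iii) Connes' Thom isomorphism forces $\dim G\equiv\dim H\pmod 2$ (Lemma~\ref{29March2018}), excluding $6$. For the two algebras of dimension $5$ that are \emph{not} of class~$\Tc$ ($\ng_{5,3}$ and $\ng_{5,6}$) none of this applies, and the paper instead uses $\ind G=1$ to pass to central quotients: the largest bounded-trace ideal has quotient $C^*(G/Z)$, the Rieffel correspondence matches these ideals, so $C^*(G_1/Z_1)$ and $C^*(G_2/Z_2)$ are Morita equivalent and one reduces to the $4$-dimensional class~$\Tc$ case (Lemma~\ref{30March2018}). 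Your proposal uses the bounded-trace ideal only as a separating invariant, not as the device for this dimension reduction, and has no substitute for it.

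A second, smaller overclaim: you treat the character set as ``a purely topological feature of the dual'' across the board. The paper only proves that $[\gg,\gg]^\perp$ is \emph{a} maximal element of $Q(\widehat G)$ in general (Proposition~\ref{maximal}); uniqueness (type~$\RRa_1$) is established for dimension $\le 5$ only outside the filiform group $F_5$ (Lemma~\ref{L5}), and $F_5$ has to be handled through the class~$\Tc$/upper-multiplicity route instead. Also, your reduction to nilpotent $H$ should be made explicit at the outset via Lemma~\ref{GCR} (closed points in $\widehat{H}$ force an exponential $H$ to be nilpotent), rather than deferred to the case analysis. The remaining separation of the two ambiguous pairs by ad hoc topological features (Hausdorffness of the complement of the characters; the three-limit-point sequence in $\widehat{F_5}$) matches what the paper does in Proposition~\ref{P7}.
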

Theorem~\ref{5D} is proved in the last subsection of Section~\ref{proof-cor}, 
To prove the theorem we first study a class of more tractable Lie groups of arbitrary dimension, namely the groups whose all nontrivial coadjoint orbits have the same dimension, and prove that this class of is invariant, within the class of nilpotent Lie groups,  under the Morita equivalence of their $C^*$-algebras (Proposition~\ref{P4}). Some further useful properties of these tractable groups are given in the first subsection of Section~\ref{proof-cor}.
Theorem~\ref{2H} is also used in the proof of Theorem~\ref{5D}.

Similar rigidity results are established Section~\ref{other_ex}  for the filiform (threadlike) Lie groups of arbitrary dimension (Theorem~\ref{fili_th})  and for the free two-step Lie group of dimension~$6$
(Theorem~\ref{N6-15}). 
The unitary dual of filiform Lie groups has been used to illustrate the intricacy of the group $C^*$-algebras in \cite{AKLSS01},  \cite{ArSoKaSc99}, \cite{Lu90}, and the references therein.

\section{Preliminaries and technical results}\label{prelim}

\subsection{General notation and terminology} 
We denote the Lie groups by upper case Roman letters and their corresponding Lie algebras by the corresponding lower case Gothic letters. 
By nilpotent Lie group we always mean a connected simply connected nilpotent Lie group. 
By exponential Lie group we mean a Lie group $G$ whose exponential map $\exp_G\colon\gg\to G$ is bijective. 

For an integer $k \ge 1$, we denote by $\ag_k$ the commutative nilpotent Lie algebra of dimension $k$, which is nothing else than a real vector space
of dimension $k$. Then $A_k$ is the corresponding nilpotent Lie group. 
Also, we denote by $H_{2k+1}$  the Heisenberg group of dimension $2k+1$, and by  $\hg_{2k+1}$  its Lie algebra.

For a Lie group $G$ with Lie algebra $\gg$, and Lie bracket 
$[\cdot,\cdot ]$,  we denote by $\gg^*$ the dual of $\gg$ and by $\langle \cdot, \cdot\rangle \colon \gg^* \times \gg \to \RR$ the corresponding duality
pairing. 
For any subset $\sg \subseteq \gg$, we set 
$$ \sg^\perp :=\{\xi \in \gg^* \mid \langle {\xi}, {\sg} \rangle =0\}.$$
We denote its corresponding coadjoint action by 
$\Ad^*_G\colon G\times \gg^*\to \gg^*$. 
The space of all coadjoint orbits is denoted by $\gg^*/G$, seen as a topological space with its natural quotient topology.
The corresponding quotient map is denoted by 
$$q\colon \gg^* \to \gg^*/G, \quad \xi \mapsto \Oc_\xi, $$ 
hence for every $\xi\in \gg^*$  
its corresponding coadjoint orbit is 
$$\Oc_\xi: =\Ad^*_G(G)\xi.$$ 
The coadjoint isotropy subalgebra at $\xi \in \gg^*$ is 
$$\gg(\xi):=\{X\in \gg\mid \langle{\xi}, {[X, \gg]}\rangle=0\}.$$
We recall that for  any locally compact group $G$,  $\widehat{G}$ stands for the set of unitary equivalence classes of unitary irreducible representations of~$G$ with 
the natural topology 
defined in terms of the group $C^*$-algebra $C^*(G)$. 
Then, in the case of exponential Lie groups, we  may tacitly identify 
$$ \gg^*/G \simeq \widehat G \simeq\widehat{ C^*(G)}, $$
via canonical homeomorphism (see  
\cite{CG90},  \cite{FuLu15},  and \cite{Dix77}).

Two separable $C^*$-algebras $\Ac_1$ and $\Ac_2$ are Morita equivalent if and only if they are stably isomorphic, in the sense that there is a  $*$-isomorphism $\Ac_1\otimes\Kc\simeq\Ac_2\otimes\Kc$, where $\Kc$ is the $C^*$-algebra of compact operators on a separable infinite-dimensional complex Hilbert space.   
If two $C^*$-algebras $\Ac_1$ and $\Ac_2$ are Morita equivalent, then there is a homeomorphism $\widehat{\Ac_1} \simeq \widehat{\Ac_2}$ (see \cite{RaWi98}).

We use  the notion of real rank $\RRa(\Ac)$ for a $C^*$-algebra $\Ac$ and results on the real rank for $C^*$-algebras 
of exponential Lie groups. See \cite{BB16b} and the references therein.

Finally, we denote by $\RR$ and $\CC$ the fields of real and complex numbers, respectively. 
We also denote $\RR^\times:=\RR\setminus\{0\}$ and $\TT:=\{z\in\CC\mid \vert z\vert=1\}$, and both these sets are usually regarded as 1-dimensional Lie groups with respect to the group operation given by multiplication.

	\subsection{Special $\RR$-spaces}
	
	The notions introduced in Definitions \ref{R-space} and \ref{solvspecl_space} were suggested by the special topological properties of unitary dual spaces of nilpotent Lie groups established in \cite{BBL17}. 
	
	\begin{definition}\label{R-space}
		\normalfont
		A \emph{special $\RR$-space} is a topological space $X$ endowed with a 
		continuous map $\RR\times X\to X$, $(t,x)\mapsto t\cdot x$, called \emph{structural map},  and with a \emph{distinguished point} $x_0\in X$ 
		satisfying the following conditions: 
		\begin{enumerate}
			\item For every $x\in X$ and $t\in \RR$ one has $0\cdot x= t \cdot x_0=x_0$ and $1\cdot x=x$. 
			\item For all $t,s\in\RR$ and $x\in X$ one has $t\cdot(s\cdot x)=ts\cdot x$. 
			\item\label{R-space_item3}
			For every $x\in X\setminus \{x_0\}$ the map $\psi_x\colon \RR\to X$, $t\mapsto t\cdot x$ is a homeomorphism onto its image. 
		\end{enumerate}
		An \emph{$\RR$-subspace} of the  special $\RR$-space $X$ is any subset $\Gamma\subseteq X$ 
		such that $\RR^\times \cdot \Gamma\subseteq\Gamma$. 
		If this is the case, then $\Gamma\cup\{x_0\}$ is a special $\RR$-space on its own. 
		
		If $Y$ is another special $\RR$-space with its structural map $\RR\times X\to X$, $(t,x)\mapsto t\cdot x$ and its distinguished point $y_0\in Y$, then a map $\psi\colon X\to Y$ is called an \emph{isomorphism of special $\RR$-spaces} if $\psi$ is a homeomorphism and $\psi(t\cdot x)=t\cdot \psi(x)$ for all $t\in\RR$ and $x\in X$.

		In the  notation above, a function $\varphi\colon X\to\RR$ is called \emph{homogeneous} if there exists $r\in[0,\infty)$ 
		such that $\varphi(t\cdot x)=t^r\varphi(x)$ for all $t\in\RR$ and $x\in X$. 
	\end{definition} 
	
	\begin{remark}\label{R-space_rem}
		\normalfont
		If $X$ is a special $\RR$-space, then one has: 
		\begin{enumerate}[(i)]
			\item If $t\in\RR^\times$ and $x\in X\setminus\{x_0\}$, then $t\cdot x=x_0$ if and only if $t=0$. 
			\item If $\psi\colon X\to Y$ is  an isomorphism of special $\RR$-spaces then $\psi(x_0)=y_0$. 
		\end{enumerate}
	\end{remark}
	
	\begin{example}\label{complem}
		\normalfont
		If $X$ is a special $\RR$-space and $\Gamma\subseteq X$ is an $\RR$-subspace of $X$, then $X\setminus\Gamma$ is also an $\RR$-subspace of~$X$.
	\end{example}
	
	\begin{example}
		\label{1dim}
		\normalfont
		If $X$ is a special $\RR$-space and $X$ is homeomorphic to $\RR$, then one can check that for every $x\in X\setminus\{x_0\}$ the map 
		$\psi_x\colon\RR\to X$, $t\mapsto t\cdot x$, is a homeomorphism, and moreover $\psi_x$ is an isomorphism of $\RR$-spaces between $X$ and the $\RR$-space $\RR$ regarded as a 1-dimensional real vector space. 
	\end{example}

	\begin{definition}\label{solvspecl_space}
		\normalfont
		A topological space $X$ is a
		 \emph{$\RR$-space of finite length} 
		if it is second countable, is locally quasi-compact, has the property $T_1$, and 
		there exists an increasing finite family of open subsets,  
		\begin{equation}\label{solvseries}
		\emptyset=V_0\subseteq V_1\subseteq\cdots\subseteq V_n=X, 
		\end{equation}
		such that $V_j\setminus V_{j-1}$ is Hausdorff in its relative topology and  dense in  $X\setminus V_{j-1}$, for $j=1,\dots,n$,  and satisfy 
		the following additional properties: 
		\begin{enumerate}
			\item\label{solvspecl_space_item1} 
			$X$ has the structure of a special $\RR$-space 
			and $\Gamma_j:=V_j\setminus V_{j-1}\subseteq X$ is an $\RR$-subspace for $j=1,\dots,n$. 
			\item\label{solvspecl_space_item2} 
			$\Gamma_n:=X\setminus V_{n-1}$ is isomorphic as a special $\RR$-space to a finite-dim\-ensional vector space, whose origin is the distinguished point $x_0$ of $X$. 
			\item\label{solvspecl_space_item3} 
			For $j=1,\dots,n-1$ the points of $\Gamma_{j+1}$ are closed and separated in $X\setminus V_{j}$.  
			\item\label{solvspecl_space_item4} 
			For $j=1,\dots,n$, $\Gamma_j$ is isomorphic as a special $\RR$-space to a 
			cone $C_j$
			in a finite-dimensional vector space. 
			In addition, $C_1$ is assumed to be a semi-algebraic Zariski open set,  
			and the dimension of the corresponding ambient vector space is called the \emph{index of $X$} and is denoted by $\ind X$. 
			\end{enumerate}
 If $G$ is a nilpotent Lie group with Lie algebra $\gg$,  then its unitary dual $\widehat{G}$ is a $\RR$-space of finite length by \cite[Thm. 4.11]{BBL17}, 
		and we define $\ind G=\ind \gg :=\ind\widehat{G}$.
		\end{definition}
	More details on the above definition could be found in \cite{BB17}.

	\begin{lemma}\label{27december2017-1555}
		If $G$ is a nilpotent Lie group and $r\in\NN$, then the following conditions are equivalent: 
		\begin{enumerate}[{\rm(i)}]
			\item $\ind G=r$. 
			\item There exists an open dense subset $V\subseteq\widehat{G}$ such that $V$ is homeomorphic to an open subset of~$\RR^r$. 
			\item $r=\min\{\dim\gg(\xi)\mid \xi \in \gg^*\}$.
		\end{enumerate}
		If $r=1$, then the above conditions are further equivalent to the following one: 
		\begin{enumerate}[{\rm(i)}]
			\setcounter{enumi}{2}
			\item The center $Z$ of $G$ is 1-dimensional and there exists $\xi\in\gg^*$ whose coadjoint isotropy group is equal to $Z$ and  the mapping $\RR^\times\to\gg^*/G$, $t\mapsto\Oc_{t\xi}$, is a homeomorphism of $\RR^\times$ onto an open dense subset of $\gg^*/G$. 
		\end{enumerate}
	\end{lemma}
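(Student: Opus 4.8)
The plan is to prove that conditions (i)--(iii) all pin down one and the same integer $r$, namely the Euclidean dimension of any open dense piece of $\widehat G\simeq\gg^*/G$, computed topologically as $\ind G$ and infinitesimally as $\min_\xi\dim\gg(\xi)$. The glue between the two descriptions is a soft uniqueness principle: if $V,V'\subseteq\widehat G$ are open and dense and homeomorphic to open subsets of $\RR^r$ and $\RR^s$, then $V\cap V'$ is again open and dense (the intersection of two open dense sets is open and dense), hence nonempty, and it is homeomorphic through either chart to a nonempty open subset of $\RR^r$ and of $\RR^s$; invariance of domain then forces $r=s$. Consequently the integer in (ii), if it exists, is unique, and it suffices to produce, from (i) and from (iii) separately, an open dense Euclidean chart of the advertised dimension.

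For (i), part~(4) of Definition~\ref{solvspecl_space} says that the bottom stratum $\Gamma_1=V_1$ is open and dense in $\widehat G$ and, as a special $\RR$-space, is isomorphic to a semi-algebraic Zariski open cone $C_1\subseteq\RR^{\ind G}$. Since a nonempty Zariski open subset of $\RR^{\ind G}$ is Euclidean open, $V_1$ is homeomorphic to an open subset of $\RR^{\ind G}$; this is (ii) with $r=\ind G$. For (iii), note that $\dim\gg(\xi)=\dim\gg-\rank\langle\xi,[\cdot,\cdot]\rangle$ is minimized, with value $r_0:=\min_\xi\dim\gg(\xi)$, on a $G$-invariant Zariski open, hence dense, subset $U\subseteq\gg^*$ on which every coadjoint orbit has the maximal dimension $\dim\gg-r_0$. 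Its saturated image $q(U)$ is open and dense in $\gg^*/G$, and by the fine layering of the coadjoint action --- the global affine cross-section through the non-jump coordinates of a Jordan--H\"older flag (see \cite{CG90} and the construction underlying \cite[Thm.~4.11]{BBL17}) --- the quotient map identifies $q(U)$ homeomorphically with an open subset of $\RR^{r_0}$. This is (ii) with $r=r_0$. The uniqueness principle now yields $\ind G=r_0$, and each of (i), (ii), (iii) becomes equivalent to the single assertion $r=\ind G=r_0$.

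It remains to treat the case $r=1$. Since central elements pair trivially with all brackets, $\zg\subseteq\gg(\xi)$ for every $\xi$, whence $1\le\dim\zg\le\min_\xi\dim\gg(\xi)=1$ and $\dim Z=1$. As $U$ and $q^{-1}(V_1)$ are open and dense in $\gg^*$, their intersection is nonempty; choose $\xi$ in it, so that $\gg(\xi)=\zg$ (forced by $\xi\in U$ together with $\dim\zg=1$) and $\Oc_\xi\in V_1$. Because the coadjoint isotropy group of a simply connected nilpotent group is connected with Lie algebra $\gg(\xi)$, its isotropy group equals $\exp\zg=Z$. The structural map on $\widehat G$ is induced by the dilations $\xi\mapsto t\xi$ of $\gg^*$, which descend to orbits since the coadjoint action is linear, so $\psi_{\Oc_\xi}(t)=\Oc_{t\xi}$. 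When $\ind G=1$ the cone $C_1$ of part~(4) of Definition~\ref{solvspecl_space} is a Zariski open cone in $\RR$ containing the origin, hence equals $\RR$, so $\Gamma_1=V_1$ is isomorphic as a special $\RR$-space to $\RR^\times$. As $\Gamma_1$ is an $\RR$-subspace, $\Oc_{t\xi}=t\cdot\Oc_\xi\in V_1$ for all $t\in\RR^\times$, and under the isomorphism $V_1\cong\RR^\times$ this corresponds to the full multiplicative orbit of a nonzero scalar; thus $t\mapsto\Oc_{t\xi}$ is a homeomorphism of $\RR^\times$ onto the open dense set $V_1$, which is precisely (iv). Conversely (iv) exhibits an open dense subset homeomorphic to $\RR^\times$, an open subset of $\RR$, which is (ii) for $r=1$, closing the loop.

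The one substantial step is (iii)$\Rightarrow$(ii): passing from the algebraic quantity $\min_\xi\dim\gg(\xi)$ to an honest Euclidean coordinate chart on an open dense part of $\widehat G$. This is exactly where the orbit method is needed, through the existence of a smooth global cross-section for the top-dimensional coadjoint orbits; everything else is soft, relying only on invariance of domain, elementary density arguments, and the special $\RR$-space structure recorded in Definition~\ref{solvspecl_space}.
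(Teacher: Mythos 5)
Your argument is sound, but note that the paper's own ``proof'' is a one-line citation of \cite[Prop. 4.9]{BBL17}, so what you have produced is in effect a reconstruction of that external result rather than a variant of an argument appearing in the paper. Your route --- (a) uniqueness of the Euclidean dimension of an open dense chart, since two open dense subsets of $\widehat{G}$ meet and invariance of domain applies to their intersection; (b) (i)$\Rightarrow$(ii) read off from Definition~\ref{solvspecl_space}\eqref{solvspecl_space_item4}, because $\Gamma_1=V_1$ is open and dense and a nonempty Zariski open set in $\RR^{\ind G}$ is Euclidean open; (c) (iii)$\Rightarrow$(ii) via the Corwin--Greenleaf generic layer and its affine cross-section; (d) the $r=1$ case via the classification of one-dimensional special $\RR$-spaces and the fact that the structural map on $\widehat{G}$ is induced by the dilations of $\gg^*$ --- closes the cycle of equivalences correctly. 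It has the added merit of isolating the only step where the orbit method is genuinely needed, namely (c), and of showing as a by-product that $\ind\widehat{G}$ is independent of the filtration chosen in Definition~\ref{solvspecl_space}; the cost is that (c) still rests on the same external machinery (\cite{CG90}, \cite{BBL17}) that the paper's citation does.

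Two points of precision, neither of which affects validity. In (c) you should pass from the full set $U$ of functionals with maximal orbit dimension to the generic (minimal jump index) layer $\Omega\subseteq U$ of the fine stratification: it is $q(\Omega)$, not necessarily $q(U)$ itself, that the cross-section identifies with a Zariski open subset of an $r_0$-dimensional affine space, and since $\Omega$ is still $G$-invariant, Zariski open and dense, nothing is lost. In (d), the sentence asserting that $C_1$ ``contains the origin, hence equals $\RR$, so $\Gamma_1$ is isomorphic to $\RR^\times$'' conflates $\Gamma_1$ with $\Gamma_1\cup\{x_0\}$; the intended statement is that the special $\RR$-space $\Gamma_1\cup\{x_0\}$ is isomorphic to $\RR$ with distinguished point $0$, whence $\Gamma_1\simeq\RR^\times$, after which Example~\ref{1dim} (or your explicit computation with the orbit of a nonzero scalar) yields that $t\mapsto\Oc_{t\xi}$ is a homeomorphism onto $V_1$.
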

	
	\begin{proof}
		Use \cite[Prop. 4.9]{BBL17}.	
	\end{proof}
	
	\begin{remark}\label{27december2017-1600}
		\normalfont
		If $G_1$ and $G_2$ are nilpotent Lie groups 
		such that $\widehat{G}_1$ is homeomorphic to $\widehat{G}_2$, 
		then $\ind G_1=\ind G_2$, as it follows easily from
		Lemma~\ref{27december2017-1555}.
	\end{remark}

	\subsection{Maximal closed and Hausdorff subsets of a locally quasi-compact topological space}
	The results in this subsection will be used in  Section~\ref{proof-cor}.
	We start with a notation. 
\begin{notation}
	\normalfont
	
	a)	For a locally quasi-compact space $X$ 	we denote by  by $Q(X)$ the set of all closed connected subsets $S\subseteq X$ for which the relative topology of $S$ is Hausdorff, and endow $Q(X)$ with the partial ordering given by inclusion. 
	
	For any net $\{x_j\}_{j\in J}$ in $X$ we also denote by 
	$\liminf\limits_{j\in J}\{x_j\}$ the set of its limit points, 
	that is, the points $x\in X$ having the property that for every neighborhood $V$ of $x$ there exists $j_V\in J$ for which $x_j\in V$ for all $j\in J$ with $j\ge j_V$. 
	
	b) For a nilpotent  Lie group $G$ we have identified
	$\gg^*/G\simeq \widehat G$ and thus we can use the notation $Q(\widehat G): = Q(\gg^*/G)$.

\end{notation}

\begin{lemma}\label{L5a}
	Let $X$ be a locally quasi-compact space with an open subset $D\subseteq X$ satisfying the following conditions:  
	\begin{enumerate}[{\rm(a)}]
		\item $D\subsetneqq\overline{D}$,  and the relative topology of $D$ is locally connected and Hausdorff.  
		\item For every net $\{x_j\}_{j\in J}$ contained in $D$ 
		with $(X\setminus D)\cap\liminf\limits_{j\in J}\{x_j\}\ne\emptyset$, 
		one has $\vert \liminf\limits_{j\in J}\{x_j\}\vert\ge 2$. 
	\end{enumerate}
	Then for every maximal element $S$ of $Q(X)$ one has $S\subseteq X\setminus D$. 
\end{lemma}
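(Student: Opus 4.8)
The plan is to argue by contradiction: suppose $S$ is a maximal element of $Q(X)$ with $S\cap D\ne\emptyset$, and derive a contradiction. Write $U:=S\cap D$. Since $D$ is open in $X$, $U$ is a nonempty open subset of $S$, and since $U\subseteq D$ it inherits from condition~(a) a Hausdorff and locally connected relative topology. The whole argument then splits according to whether $U=S$ (that is, $S\subseteq D$) or $U\subsetneq S$.

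The conceptual core is the case $U\subsetneq S$. Here I would use that $S$ is connected: a nonempty subset of a connected space that is both open and proper cannot also be closed, so $U$ is not closed in $S$ and there is a point $y\in\overline{U}^{\,S}\setminus U$. Since $y\in S$ but $y\notin U=S\cap D$, necessarily $y\in S\setminus D\subseteq X\setminus D$. I then choose a net $\{x_j\}_{j\in J}$ in $U\subseteq D$ converging to $y$ in $S$ (hence in $X$). Now comes the key point: this net lies in $S$, which is closed in $X$ and Hausdorff in its relative topology; therefore all of its limit points in $X$ lie in $S$ and, by Hausdorffness, are unique, so $\liminf_{j\in J}\{x_j\}=\{y\}$ has exactly one element. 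On the other hand $y\in(X\setminus D)\cap\liminf_{j\in J}\{x_j\}$, so condition~(b) forces $|\liminf_{j\in J}\{x_j\}|\ge 2$. This contradiction rules out the case $U\subsetneq S$.

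It remains to exclude the case $S\subseteq D$, and the goal here is to contradict maximality by \emph{enlarging} $S$. First note that $D$ is not closed in $X$ (because $D\subsetneqq\overline{D}$) while $S$ is, so $S\subsetneq D$. Using local connectedness I would pass to the connected component $K$ of $D$ containing $S$, which is open in $X$; the density hypothesis $D\subsetneqq\overline{D}$ shows that $K$ is not closed in $X$, so $\overline{K}^{\,X}$ meets $X\setminus D$ and in particular $K\setminus S\ne\emptyset$. The idea is then to grow $S$ to a strictly larger connected subset of $K\subseteq D$ that is still closed in $X$; since such a set lies inside $D$ it is automatically Hausdorff and connected, so it would be a member of $Q(X)$ properly containing $S$, contradicting maximality. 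Concretely one enlarges $S$ toward the boundary $\overline{D}\setminus D$ through $K$, using local quasi-compactness to select connected quasi-compact neighbourhoods whose relevant closures remain closed in $X$.

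The step I expect to be the main obstacle is exactly this enlargement in the case $S\subseteq D$: keeping the enlarged set \emph{closed in $X$} is delicate because $X$ itself need not be Hausdorff, so quasi-compactness alone does not guarantee closedness, and one must control the new set's behaviour near the non-Hausdorff boundary $\overline{D}\setminus D$ (this is also where one must rule out a component of $D$ being clopen in $X$, which the strict inclusion $D\subsetneqq\overline{D}$ is designed to prevent). By contrast, the case $U\subsetneq S$ is clean, being the direct clash between the uniqueness of limits in the closed Hausdorff set $S$ and the multiplicity forced by hypothesis~(b).
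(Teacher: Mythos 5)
Your first case ($S\cap D\subsetneq S$) is correct and is essentially the argument the paper itself gives: the net in $S\cap D$ accumulating at a point of $S\setminus D$ has all of its limit points inside the closed set $S$, hence at most one limit point because the relative topology of $S$ is Hausdorff, while hypothesis~(b) forces at least two. That clash is exactly how the paper disposes of this case.

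The case $S\subseteq D$ is where your proposal has a genuine gap, and you flag it yourself. There are two problems. First, the intermediate claim that the connected component $K$ of $D$ containing $S$ ``is not closed in $X$'' does not follow from $D\subsetneqq\overline{D}$: that hypothesis only says that $D$ as a whole is not closed, and nothing prevents an individual component of $D$ from being clopen in $X$. Second, and more seriously, the enlargement of $S$ ``toward the boundary $\overline{D}\setminus D$'' is never actually constructed; as you concede, keeping the enlarged set closed in the (possibly non-Hausdorff) space $X$ is precisely the difficulty, and quasi-compactness alone does not deliver it. The paper sidesteps all of this by making the enlargement purely local and staying well inside $D$: since $S$ is closed and $D$ is not, one has $S\subsetneqq D$, so there is a point $s_0\in S\setminus{\rm int}\,S$; as $s_0\in D$ and $D$ is open, locally compact and locally connected in its relative topology, $s_0$ admits a closed connected neighborhood $S_0\subseteq D$. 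Then $S_0\cup S$ is closed, connected and contained in $D$ (hence Hausdorff), so it belongs to $Q(X)$, and it strictly contains $S$ because $S_0$ is a neighborhood of the non-interior point $s_0$. No contact with $\overline{D}\setminus D$ and no analysis of components are needed; you should replace your component/boundary construction by this local one.
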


\begin{proof}
	We may assume $S\ne\emptyset$ and we argue by contradiction: suppose $S\cap D\ne\emptyset$. 
	Then, since $S$ is connected, either $S\cap D$ is not relatively closed in $S$ or $S\subseteq D$. 
	We discuss these cases separately. 
	
	Case 1: $S\cap D$ is not relatively closed in $S$. 
	Then there exists a net $\{x_j\}_{j\in J}$ in $S\cap D$ with 
	$(S\setminus D)\cap\liminf\limits_{j\in J}\{x_j\}\ne\emptyset$. 
	Then the hypothesis implies 
	$\vert \liminf\limits_{j\in J}\{x_j\}\vert\ge 2$. 
	On the other hand, since $S$ is closed, one has $\liminf\limits_{j\in J}\{x_j\}\subseteq S$, 
	hence we obtain a contradiction with the fact that $S$ is Hausdorff with respect to its relative topology. 
	
	Case 2: $S\subseteq D$. 
	Since $S$ is closed and $D\ne\overline{D}$, 
	one then actually has $S\subsetneqq D$. 
	Since $S\ne\emptyset$, there exists $s_0\in S\setminus{\rm int}\, S$. 
	In particular $s_0\in D$ and then, since $D$ is locally compact, 
	$s_0$ has a fundamental system of closed neighborhoods contained 
	in~$D$. 
	Therefore, as $D$ is also locally connected, there exists a closed connected neighborhood $S_0$ of $s_0$ with $S_0\subseteq D$. 
	
	One has $s_0\in S_0\cap S$ hence, since both $S_0$ and $S$ are connected, it follows that $S_0\cup S$ is connected. 
	Moreover, $S_0\cup S\subseteq D$, hence the relative topology of $S_0\cup S$ is Hausdorff. 
	Also, both $S_0$ and $S$ are closed, hence  $S_0\cup S$ is closed, 
	and then $S_0\cup S\in Q(X)$. 
	
	On the other hand, since $s_0\in S\setminus{\rm int}\, S$ and $S_0$ is a neighborhood of $s_0$, 
	one cannot have $S_0\subseteq S$, hence $S\subsetneqq S_0\cup S$, 
	and this is a contradiction with the hypothesis that $S$ is a maximal element of $Q(X)$. 
	This completes the proof. 
\end{proof}

\begin{lemma}\label{L5b}
	Let $X$ be a locally quasi-compact space with a finite family of open subsets 
	$$\emptyset=D_0\subseteq D_1\subseteq\cdots\subseteq D_n= X$$ 
	satisfying the following conditions for $k=1,\dots,n-1$: 
	\begin{enumerate}[{\rm(a)}]
		\item 
		$D_k\setminus D_{k-1}$ is not a closed subset of $X$ and its 
		relative topology is locally connected and Hausdorff.  
		\item For every net $\{x_j\}_{j\in J}$ contained in $D_k\setminus D_{k-1}$ 
		with $(X\setminus D_k)\cap\liminf\limits_{j\in J}\{x_j\}\ne\emptyset$, 
		one has $\vert \liminf\limits_{j\in J}\{x_j\}\vert\ge 2$. 
	\end{enumerate}
	Then for every maximal element $S$ of $Q(X)$ one has $S\subseteq X\setminus D_{n-1}$. 
\end{lemma}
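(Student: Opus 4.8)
The plan is to prove the statement by induction on $n$, using Lemma~\ref{L5a} to strip off the bottom layer $D_1$ and then passing to the closed subspace $X\setminus D_1$, on which the remaining layers form a shorter chain of the same type.

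For $n=1$ the chain is $\emptyset=D_0\subseteq D_1=X$, there are no conditions to verify, and the conclusion $S\subseteq X\setminus D_0=X$ is trivial. For the inductive step I would first observe that hypotheses (a) and (b) for $k=1$ are exactly the hypotheses (a) and (b) of Lemma~\ref{L5a} applied to the open set $D:=D_1$: here $D_1\setminus D_0=D_1$, so "$D_1$ is not closed" reads $D_1\subsetneqq\overline{D_1}$, and the complement $X\setminus D_1$ is the one occurring in Lemma~\ref{L5a}. Hence that lemma already gives $S\subseteq X\setminus D_1$ for every maximal element $S$ of $Q(X)$.

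Next I would pass to $X_1:=X\setminus D_1$, which is closed in $X$ and therefore locally quasi-compact, equipped with the shorter chain $D'_j:=D_{j+1}\setminus D_1$ for $j=0,\dots,n-1$, so that $D'_0=\emptyset$ and $D'_{n-1}=X_1$, while each $D'_j$ is open in $X_1$. The main routine work is checking that this chain of length $n-1$ inherits the hypotheses of the lemma in $X_1$ for $j=1,\dots,n-2$, via the reindexing $k=j+1$: the relative topology of $D'_j\setminus D'_{j-1}=D_{j+1}\setminus D_j$ is unchanged when this set is viewed inside $X_1$ rather than $X$; because $X_1$ is closed in $X$, a subset of $X_1$ is closed in $X_1$ precisely when it is closed in $X$, so condition (a) transfers; and for a net contained in $X_1$ its set of limit points computed in $X_1$ coincides with the one computed in $X$ (again since $X_1$ is closed), while one checks $X_1\setminus D'_j=X\setminus D_{j+1}$, so condition (b) transfers as well. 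By the induction hypothesis, every maximal element of $Q(X_1)$ is then contained in $X_1\setminus D'_{n-2}=X\setminus D_{n-1}$.

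Finally I would reconcile the two notions of maximality. Since $X_1$ is closed in $X$ and $S\subseteq X_1$ by the first step, $S$ is closed in $X_1$ and hence $S\in Q(X_1)$; moreover any $T\in Q(X_1)$ with $S\subseteq T$ is closed in $X$, hence lies in $Q(X)$, so maximality of $S$ in $Q(X)$ forces $S=T$, i.e.\ $S$ is maximal in $Q(X_1)$. The induction hypothesis then yields $S\subseteq X\setminus D_{n-1}$, as desired. I expect the only delicate point to be this bookkeeping: that closedness and the $\liminf$ operation are insensitive to whether they are computed in $X$ or in the closed subspace $X_1$, and that maximality is preserved when restricting to a closed subspace. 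Everything else is a direct translation of the hypotheses across the reindexing.
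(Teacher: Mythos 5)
Your proof is correct and follows essentially the same strategy as the paper's: induction on $n$, reducing to Lemma~\ref{L5a} by restricting to a closed (hence still locally quasi-compact) subspace on which the remaining layers form a shorter admissible chain, with the same bookkeeping that closedness, limit sets of nets, and maximality in $Q(\cdot)$ are unaffected by passing to that closed subspace. The only difference is cosmetic: you peel off the bottom layer $D_1$ first and induct on $X\setminus D_1$, whereas the paper applies the induction hypothesis to the truncated chain ending at $D_{n-2}$ and then uses Lemma~\ref{L5a} to remove the top layer $D_{n-1}\setminus D_{n-2}$ inside $X\setminus D_{n-2}$.
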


\begin{proof}
	We argue by induction. 
	The case $n=1$ is trivial, and the case $n=2$ is exactly Lemma~\ref{L5a}. 
	
	We now assume $n\ge 3$ and that the assertion holds for $n-1$. 
	Using the induction hypothesis for the family 
	$$\emptyset=D_0\subseteq D_1\subseteq\cdots\subseteq D_{n-2}\subseteq X$$ 
	we obtain 
	\begin{equation}\label{L5b_proof_eq1}
	S\subseteq X\setminus D_{n-2}.
	\end{equation}
	We now check that Lemma~\ref{L5a} can be applied to  $$\widetilde{D}:=D_{n-1}\setminus D_{n-2}\subseteq X\setminus D_{n-2}=:\widetilde{X}.$$
	It directly follows by hypothesis that $\widetilde{X}$ is locally quasi-compact, and $\widetilde{D}$ is an open subset of $\widetilde{X}$ whose relative topology is locally connected and Hausdorff.  
	To show that $\widetilde{D}$ is not closed in $\widetilde{X}$, we argue by contradiction: suppose that 
	$\widetilde{D}$ is a closed subset of $\widetilde{X}$.  
	Then, since $\widetilde{X}$ is a closed subset of $X$, it follows that $\widetilde{D}=D_{n-1}\setminus D_{n-2}$ is closed in $X$, 
	which is a contradiction with the hypothesis. 
	
	Now let $\{x_j\}_{j\in J}$ be a net in $\widetilde{D}$ with  
	$(\widetilde{X}\setminus \widetilde{D})\cap\liminf\limits_{j\in J}\{x_j\}\ne\emptyset$. 
	Since 
	$\widetilde{X}\setminus \widetilde{D}=X\setminus D_{n-1}$, 
	it then follows by hypothesis that 
	$\vert \liminf\limits_{j\in J}\{x_j\}\vert\ge 2$.
	
	Furthermore, since $\widetilde{X}$ is a closed subset of $X$, it follows that $Q(\widetilde{X})\subseteq Q(X)$ and $S\in Q(\widetilde{X})$ by \eqref{L5b_proof_eq1}, hence $S$ is a maximal element of $Q(\widetilde{X})$ as well. 
	It then follows by Lemma~\ref{L5a} that $S\subseteq \widetilde{X}\setminus\widetilde{D}=X\setminus D_{n-1}$, 
	and this completes the proof by induction. 
\end{proof}
\subsection{The space of closed subgroups of a locally compact group}

For any locally compact group~$G$  we consider 
$$\Kc(G):=\{K\mid K\text{ closed subgroup}\subseteq G\}.$$ 
It is well known that $\Kc(G)$ is a compact topological space with its Fell topology, for which a basis   consists
of the sets 
\begin{equation}
\label{Fellbasis}
\Uc(C,\Sc):=\{K\in\Kc(G)\mid K\cap C=\emptyset;\ (\forall A\in\Sc)\ K\cap A\ne\emptyset\}
\end{equation}
for all compact sets $C\subseteq G$ and all finite sets $\Sc$ of open subsets of~$G$. 
 
The following simple fact is probably well known. 

\begin{lemma}\label{points}
	Let $G$ be a first countable locally compact group.  
	If $(K_j)_j$ and $K$  are closed subgroups of $G$ with  $\lim\limits_{j\to\infty}K_j=K$ in $\Kc(G)$, 
	then for every finite family $x^{(1)},\dots,x^{(m)}\in K$ 
	there exist 
	a subsequence $\{K_{j_i}\}_{i\ge 1}$ and 
	$x^{(1)}_i,\dots,x^{(m)}_i\in K_{j_i}$ for every $i\ge 1$ with $\lim\limits_{i\to\infty}x^{(s)}_i=x^{(s)}$ in $G$ 
	for $s=1,\dots,m$. 
\end{lemma}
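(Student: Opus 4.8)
The plan is to read the conclusion off directly from the definition of the Fell topology, using first countability only to organize a simultaneous diagonal extraction for the finitely many points $x^{(1)},\dots,x^{(m)}$. Since $G$ is first countable, I would begin by fixing, for each $s\in\{1,\dots,m\}$, a decreasing sequence $V^{(s)}_1\supseteq V^{(s)}_2\supseteq\cdots$ of open neighborhoods of $x^{(s)}$ forming a neighborhood basis at that point. The group structure plays no role here; the statement is really one about the lower semicontinuity inherent in Fell convergence of closed sets.

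The key step is to note that, for each fixed $n\in\NN$, the finite family $\Sc_n:=\{V^{(1)}_n,\dots,V^{(m)}_n\}$ consists of open sets each of which meets $K$, because $x^{(s)}\in K\cap V^{(s)}_n$ for every $s$. Hence $\Uc(\emptyset,\Sc_n)$ is a basic open neighborhood of $K$ in $\Kc(G)$, and the convergence $K_j\to K$ yields an index $N_n$ such that $K_j\in\Uc(\emptyset,\Sc_n)$, i.e. $K_j\cap V^{(s)}_n\ne\emptyset$ for all $s=1,\dots,m$, whenever $j\ge N_n$. I would then choose indices $j_1<j_2<\cdots$ with $j_i\ge N_i$ and, for each $i$ and each $s$, select a point $x^{(s)}_i\in K_{j_i}\cap V^{(s)}_i$.

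Finally, because $\{V^{(s)}_n\}_{n\ge1}$ is a neighborhood basis at $x^{(s)}$ and $x^{(s)}_i\in V^{(s)}_i$ with the neighborhoods shrinking as $i$ grows, the sequence $(x^{(s)}_i)_i$ converges to $x^{(s)}$ for each $s$, which is exactly the claim. There is no serious obstacle in this argument: the only point requiring care is that a single subsequence must serve all $m$ points at once, and this is arranged by bundling the $n$-th neighborhoods of all the points into the one finite family $\Sc_n$, so that entering $\Uc(\emptyset,\Sc_n)$ controls every $x^{(s)}$ simultaneously. First countability is what makes this one-parameter shrinking possible, allowing an honest subsequence rather than a more general net.
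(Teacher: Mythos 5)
Your proof is correct and uses the same mechanism as the paper, namely the basic Fell-topology neighborhoods $\Uc(\emptyset,\Sc)$ together with a countable decreasing neighborhood basis supplied by first countability. The only difference is organizational: the paper proves the case $m=1$ and then handles general $m$ by induction, extracting successive subsequences, whereas you bundle the $n$-th neighborhoods of all $m$ points into a single finite family $\Sc_n$ and obtain one subsequence serving all points at once, which is a slightly cleaner way to run the same argument.
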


\begin{proof} 
	We first assume $m=1$ and denote $x:=x^{(1)}$. 
	Let $A_1\supseteq A_2\supseteq\cdots$ be a countable base of open neighborhoods of $x$ in $G$. 
	For every $i\ge 1$ one has  
	$$\Uc(\emptyset,\{A_i\}):=\{H\in\Kc(G)\mid  H\cap A_i\ne\emptyset\} $$
	which is  an open subset of $\Kc(G)$, 
	cf.~\eqref{Fellbasis}.  
	Since $x\in K\cap A_i$, one has $K\in \Uc(\emptyset,\{A_i\})$, 
	hence there exists $j_i\ge 1$ with $K_j\in \Uc(\emptyset,\{A_i\})$, that is, $K_j\cap A_i\ne\emptyset$, 
	for every $j\ge j_i$, 
	by the hypothesis $\lim\limits_{j\to\infty}K_j=K$ in $\Kc(G)$. 
	We may  assume $j_1<j_2<\cdots$. 
	Selecting any $x_i\in K_{j_i}\cap A_i$ and using the fact that  
	$A_1\supseteq A_2\supseteq\cdots$ is a base of neighborhoods of $x$, 
	one can check that $\lim\limits_{i\to\infty}x_i=x$ in $G$. 
	
	The proof in the general case is done by induction. 
	If $m\ge 2$ and one has a subsequence $\{K_{j_i}\}_{i\ge 1}$ and 
	$x^{(1)}_i,\dots,x^{(m-1)}_i\in K_{j_i}$ for every $i\ge 1$ with $\lim\limits_{i\to\infty}x^{(s)}_i=x^{(s)}$ in $G$ 
	for $s=1,\dots,m-1$, 
	then one defines $x:=x^{(m)}$ and one can select by the above method a suitable subsequence 
	$\{K_{j_{i_r}}\}_{r\ge 1}$ of 
	$\{K_{j_i}\}_{i\ge 1}$ for which there exists $x^{(m)}_r\in K_{j_{i_r}}$ for all $r\ge 1$ with 
	$\lim\limits_{r\to\infty}x^{(m)}_r=x^{(m)}$ in $G$. 
	This completes the induction step and the proof. 
\end{proof}

\begin{remark}
	\normalfont
	The proof of Lemma~\ref{points} involves neither the group structures of $G$ and $K_j$, 
	nor the fact that $G$ is locally compact. 
	Therefore its reasoning leads to a more general result: 
	Let $X$ be a topological space with its coresponding space 
	$\Xc(X)$ of all closed subsets of $X$ endowed with its Fell topology. 
	If $X$ is first countable and $\lim\limits_{j\to\infty}K_j=K$ in $\Xc(X)$, 
	then for every finite family $x^{(1)},\dots,x^{(m)}\in K$ 
	there exist 
	a subsequence $\{K_{j_i}\}_{i\ge 1}$ and 
	$x^{(1)}_i,\dots,x^{(m)}_i\in K_{j_i}$ for every $i\ge 1$ with $\lim\limits_{i\to\infty}x^{(s)}_i=x^{(s)}$ in $X$ 
	for $s=1,\dots,m$. 
\end{remark}

The following lemma uses the weak containment of subgroup representations in the sense of \cite{Fe64}.

\begin{lemma}\label{conv1}
	Let $G$ be any locally compact group which is first countable, 
	and assume that $\lim\limits_{j\to\infty}K_j=K$ in $\Kc(G)$. 
	Also let $\varphi\in\Cc(G)$ whose restriction $\varphi\vert_{K_j}$ is a character of $K_j$ for every $j\ge 1$. 
	Then the restriction $\varphi\vert_K$ is a character of $K$ 
	and the subgroup representation $\langle K,\varphi\vert_K\rangle$ is weakly contained in the family of subgroup representations 
	$\{\langle K_j,\varphi\vert_{K_j}\rangle\}_{j\ge 1}$. 
\end{lemma}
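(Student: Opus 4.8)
The plan is to establish the two assertions in turn, using Lemma~\ref{points} as the bridge between the subgroups $K_j$ and their limit $K$, together with the continuity of $\varphi$. To see that $\varphi\vert_K$ is a character of $K$, fix $x,y\in K$ and apply Lemma~\ref{points} to the finite family $\{x,y\}\subseteq K$; this produces a subsequence $\{K_{j_i}\}_{i\ge1}$ and elements $x_i,y_i\in K_{j_i}$ with $x_i\to x$ and $y_i\to y$ in $G$. The decisive feature is that $x_i$ and $y_i$ lie in the \emph{same} subgroup $K_{j_i}$, so that the hypothesis that $\varphi\vert_{K_{j_i}}$ is a character gives $\varphi(x_iy_i)=\varphi(x_i)\varphi(y_i)$ and $\vert\varphi(x_i)\vert=1$. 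Letting $i\to\infty$ and using the continuity of $\varphi$ and of the multiplication of $G$ (so that $x_iy_i\to xy$) yields $\varphi(xy)=\varphi(x)\varphi(y)$ and $\vert\varphi(x)\vert=1$. As $\varphi$ is continuous and $\varphi(e)=1$, this shows that $\varphi\vert_K\colon K\to\TT$ is a character.

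For the weak containment I would unwind Fell's definition \cite{Fe64} in the present one-dimensional situation. A subgroup representation given by a character is its own normalized function of positive type, so the matrix coefficients attached to $\langle K,\varphi\vert_K\rangle$ and to $\langle K_j,\varphi\vert_{K_j}\rangle$ are simply the restrictions $\varphi\vert_K$ and $\varphi\vert_{K_j}$ of the single continuous function $\varphi$. Consequently the relation $\langle K,\varphi\vert_K\rangle\prec\{\langle K_j,\varphi\vert_{K_j}\rangle\}_{j\ge1}$ reduces to the approximation statement that for every finite set $x^{(1)},\dots,x^{(m)}\in K$ and every $\varepsilon>0$ there exist an index $j$ and elements $y^{(1)},\dots,y^{(m)}\in K_j$ with each $y^{(s)}$ close to $x^{(s)}$ in $G$ and $\vert\varphi(x^{(s)})-\varphi(y^{(s)})\vert<\varepsilon$. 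This is exactly what Lemma~\ref{points} delivers: it supplies a subsequence $\{K_{j_i}\}$ and points $x^{(s)}_i\in K_{j_i}$ with $x^{(s)}_i\to x^{(s)}$ for each $s$, whereupon the continuity of $\varphi$ forces $\varphi(x^{(s)}_i)\to\varphi(x^{(s)})$, so that any sufficiently large index $i$ supplies the required $y^{(s)}=x^{(s)}_i$.

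The step I expect to require the most care is matching the chosen formulation of weak containment of subgroup representations to this point-by-point approximation, since the functions of positive type involved live on the varying domains $K_j$ rather than on one fixed group. The resolution is precisely Lemma~\ref{points}, which realizes finitely many prescribed points of $K$ as limits of points lying in a single common $K_{j_i}$; the one-dimensionality of the representations then removes any need to approximate by sums of matrix coefficients, and the continuity of $\varphi$ transfers the approximation from the points to the values of the characters.
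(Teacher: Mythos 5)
Your verification that $\varphi\vert_K$ is a character is exactly the paper's argument: apply Lemma~\ref{points} to the pair $x,y$, pass to a subsequence so that the approximants $x_i,y_i$ lie in the \emph{same} subgroup $K_{j_i}$, and let the continuity of $\varphi$ and of the multiplication of $G$ do the rest; the paper leaves the final verification as "straightforward" and you have supplied it correctly. For the weak containment the paper simply invokes \cite[Lemma 3.2 and Thm. 3.1']{Fe64}, whereas you attempt to unwind the definition. Your unwinding is in the right spirit --- the only real inputs are $\lim_j K_j=K$ in $\Kc(G)$ and the fact that all the functions of positive type in sight are restrictions of the single continuous function $\varphi$ --- but note that Fell's criterion is not literally the one-sided, finite-point approximation you state: it asks for uniform approximation on compact subsets of $G$ in a two-sided graph sense (for each point of $K\cap C$ a nearby point of $K_j$ with close $\varphi$-value, \emph{and conversely} for points of $K_j\cap C$). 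That stronger statement still holds here, because Fell-convergence $K_j\to K$ gives the two-sided matching of points near any compact set and the local uniform continuity of $\varphi$ then controls the values; so no actual gap results, but the reduction you flag as the delicate step is precisely where one should quote Fell's Lemma~3.2 and Theorem~3.1$'$ accurately rather than substitute a pointwise paraphrase.
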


\begin{proof}
	To check that $\varphi\vert_K$ is a character of $K$ we must prove that 
	$\varphi(K)\subseteq\TT$ and $\varphi\vert_K\colon K\to\TT$ 
	is a group morphism, since $\varphi\vert_K$ is continuous by the hypothesis $\varphi\in\Cc(G)$. 
	To this end let $x,y\in K$ arbitrary. 
	It follows by Lemma~\ref{points} that, after replacing $\{K_j\}_{j\ge 1}$ by a suitable subsequence, 
	we may assume that there exist $x_j,y_j\in K_j$ for every $j\ge 1$ 
	with $\lim\limits_{j\to\infty}x_j=x$ and $\lim\limits_{j\to\infty}y_j=y$. 
	Since $\varphi\in\Cc(G)$, it then follows that 
	$\varphi(x)=\lim\limits_{j\to\infty}\varphi(x_j)$ and $\varphi(y)=\lim\limits_{j\to\infty}\varphi(y_j)$. 
	Now, using the hypothesis that $\varphi\vert_{K_j}$ is a character of $K_j$ for every $j\ge 1$, 
	it is straightforward to check that $\varphi\vert_K$ is a character of~$K$. 
	
	The weak containment assertion follows by \cite[Lemma 3.2 and Thm. 3.1']{Fe64}.
\end{proof}

\section{Topological characterization of the set of characters}
\label{Sect3}

Let  $G$ be a nilpotent Lie group. 
Then the set of characters $[\gg,\gg]^\perp$, regarded as a subset of  $\gg^*/G$,
is closed,  thus  $[\gg,\gg]^\perp\in Q(\widehat G)$. 

	For every even integer $d\ge 0$ we denote
$$(\gg^*/G)_d:=\{\Oc\in\gg^*/G\mid \dim\Oc= d\}.$$
In particular $(\gg^*/G)_0=[\gg,\gg]^\perp$. 	

\begin{lemma}\label{quot}
	Let $G$ be a connected simply connected nilpotent Lie group with its corresponding Lie algebra $\gg$. 
	If $q\colon\gg^*\to\gg^*/G$, $\xi\mapsto\Oc_{\xi}$, 
	is the quotient map onto the set of coadjoint orbits, 
	then the following assertions hold: 
	\begin{enumerate}[{\rm(i)}]
		\item 
		The complement $\widehat{G }\setminus [\gg,\gg]^\perp$ of the set of characters is either empty (if $G$ is commutative) 
		or dense in $\widehat{G}$ (if $G$ is noncommutative). 
		\item For any dense open set $D\subseteq\gg^*/G$, the set $q^{-1}(D)\subseteq\gg^*$ is also open and dense.  
	\end{enumerate}
\end{lemma}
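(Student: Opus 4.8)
The plan is to reduce everything to one elementary fact: the quotient map $q\colon\gg^*\to\gg^*/G$ is open. This holds because $q$ is the orbit map of the continuous coadjoint action $\Ad^*_G$, so for any open $U\subseteq\gg^*$ the saturation $q^{-1}(q(U))=\bigcup_{g\in G}\Ad^*_G(g)U$ is a union of open sets, whence $q(U)$ is open in the quotient topology. I would record this at the outset, since both assertions follow from it by routine point-set topology.

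For (i), I would first dispose of the commutative case: if $G$ is commutative then $[\gg,\gg]=0$, so $[\gg,\gg]^\perp=\gg^*$ and the character set is all of $\widehat G$, making the complement empty. Assume now $G$ is noncommutative, so $[\gg,\gg]\ne 0$ and $[\gg,\gg]^\perp$ is a proper linear subspace of $\gg^*$; being a proper subspace it is closed with empty interior, so $U_0:=\gg^*\setminus[\gg,\gg]^\perp$ is open and dense in $\gg^*$. The step needing a little care is the identification $\widehat G\setminus[\gg,\gg]^\perp=q(U_0)$. For this I would note that $[\gg,\gg]$ is a characteristic ideal, so its annihilator $[\gg,\gg]^\perp$ is $\Ad^*_G$-invariant, and that the action is trivial there because $\xi\in[\gg,\gg]^\perp$ forces $\gg(\xi)=\gg$, i.e.\ $\Oc_\xi=\{\xi\}$. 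Hence each coadjoint orbit is either a singleton contained in $[\gg,\gg]^\perp$ or disjoint from $[\gg,\gg]^\perp$, which gives exactly $q^{-1}([\gg,\gg]^\perp)=[\gg,\gg]^\perp$ and therefore $\widehat G\setminus[\gg,\gg]^\perp=q(U_0)$. Density of $q(U_0)$ then follows from continuity and surjectivity of $q$ alone: for any nonempty open $W\subseteq\gg^*/G$ the set $q^{-1}(W)$ is open and nonempty, hence meets the dense set $U_0$, so $W$ meets $q(U_0)$.

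For (ii), openness of $q^{-1}(D)$ is immediate from continuity of $q$. For density I would invoke the openness of $q$ established above: given any nonempty open $V\subseteq\gg^*$, the image $q(V)$ is open and nonempty, so it meets the dense set $D$; pulling a point of $q(V)\cap D$ back into $V$ shows $V\cap q^{-1}(D)\ne\emptyset$, and since $V$ was arbitrary, $q^{-1}(D)$ is dense.

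I do not expect a genuine obstacle here: the content is entirely point-set topological once $q$ is known to be open, and the only place requiring thought is the identification in (i), which rests on the $\Ad^*_G$-invariance of $[\gg,\gg]^\perp$ and the triviality of the action on it. The standing hypotheses that $G$ is connected, simply connected and nilpotent enter only through the already-established identifications $\gg^*/G\simeq\widehat G$ and $(\gg^*/G)_0=[\gg,\gg]^\perp$ recalled before the lemma.
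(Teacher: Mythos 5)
Your proposal is correct and follows the same route as the paper, whose proof consists precisely of the observation that $q$ is an open continuous map (deferring details to a cited lemma); you have simply written out those routine point-set details, including the needed identification $q^{-1}([\gg,\gg]^\perp)=[\gg,\gg]^\perp$.
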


\begin{proof}
	The stated properties follow in a 
straightforward manner by the fact that $q$ is an open continuous map. See \cite[Lemma~4.5]{BB17} for more detail. 
\end{proof}

The following result is a version of \cite[Lemma 6.8(2)]{BBL17} for coadjoint orbits that are not necessarily flat.

\begin{lemma}\label{30december2017}
	Let $G$ be a 
	nilpotent Lie group.  
For any $S \in Q(\widehat G)$, the set $S\cap [\gg, \gg]^\perp$ is both closed and open in $S$.
	\end{lemma}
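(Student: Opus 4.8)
The plan is to split the statement into its two halves and dispatch them separately. The closed half is immediate: the set of characters $[\gg,\gg]^\perp$ is a closed subset of $\widehat G\simeq\gg^*/G$, so its trace $S\cap[\gg,\gg]^\perp$ on $S$ is closed in the relative topology of $S$. Hence all the real work lies in the open half, namely in showing that $S\setminus[\gg,\gg]^\perp$ is closed in $S$. Unwinding this, I need to rule out the following scenario: a net $\{\Oc_{\xi_j}\}$ of \emph{positive-dimensional} coadjoint orbits contained in $S$ converges, inside the Hausdorff space $S$, to a character $\Oc_{\xi_0}\in S$; recall that $\xi_0\in[\gg,\gg]^\perp$ means exactly $\gg(\xi_0)=\gg$, i.e.\ $\Oc_{\xi_0}=\{\xi_0\}$.

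To produce a contradiction I would first move down to the level of representatives in $\gg^*$. Since $q\colon\gg^*\to\gg^*/G$ is open and continuous (Lemma~\ref{quot}), the standard lifting property of open maps lets me pass to a subnet and choose $\eta_j\in\Oc_{\xi_j}$ with $\eta_j\to\xi_0$ in $\gg^*$. Now fix a norm $\|\cdot\|$ on $\gg^*$ and a radius $\varepsilon>0$. Eventually $\|\eta_j-\xi_0\|<\varepsilon$, whereas each orbit $\Oc_{\xi_j}=\Oc_{\eta_j}$ is connected, being the continuous image of the connected group $G$, and is unbounded in $\gg^*$ because it has positive dimension. A connected set that meets both the open $\varepsilon$-ball about $\xi_0$ and the complement of the closed $\varepsilon$-ball must meet the sphere $\Sigma_\varepsilon:=\{\zeta\in\gg^*\mid\|\zeta-\xi_0\|=\varepsilon\}$, so I may select $\zeta_j\in\Oc_{\eta_j}\cap\Sigma_\varepsilon$.

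The compactness of $\Sigma_\varepsilon$ then forces a further subnet along which $\zeta_j\to\zeta_\infty\in\Sigma_\varepsilon$, and in particular $\|\zeta_\infty-\xi_0\|=\varepsilon>0$, so $\zeta_\infty\neq\xi_0$ and therefore $\Oc_{\zeta_\infty}\neq\{\xi_0\}=\Oc_{\xi_0}$. But $q(\zeta_j)=\Oc_{\eta_j}=\Oc_{\xi_j}$, so continuity of $q$ gives $\Oc_{\xi_j}\to\Oc_{\zeta_\infty}$ along this subnet, while as a subnet of the original net it still converges to $\Oc_{\xi_0}$. Since $S$ is closed, $\Oc_{\zeta_\infty}\in S$, and we have exhibited a net in the Hausdorff space $S$ with two distinct limits $\Oc_{\xi_0}\neq\Oc_{\zeta_\infty}$, which is absurd. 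This forces the limit of any such net to be a positive-dimensional orbit, i.e.\ $S\setminus[\gg,\gg]^\perp$ is closed in $S$, as wanted.

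I expect the only genuinely geometric input, and hence the main point to secure, to be the unboundedness of a positive-dimensional coadjoint orbit of a nilpotent Lie group. This I would extract from the coadjoint flow: if $\dim\Oc_\xi>0$ then, since $\gg(\xi)\neq\gg$, there are $X,Y_0\in\gg$ with $\langle\xi,[X,Y_0]\rangle\neq0$, and the function $t\mapsto\langle\Ad^*_G(\exp_G(tX))\xi,Y_0\rangle=\langle\xi,\ee^{-t\,\ad X}Y_0\rangle$ is a polynomial in $t$ whose coefficient of $t$ equals $-\langle\xi,[X,Y_0]\rangle\neq0$ (the series terminates because $\ad X$ is nilpotent), hence is nonconstant and unbounded; consequently the orbit is unbounded in $\gg^*$. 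This is precisely the step at which flatness of the orbits plays no role, so the connectedness-and-unboundedness mechanism handles coadjoint orbits that are not necessarily flat, as the statement requires; the only remaining routine verification is the lifting of a convergent net of orbits to a convergent net of representatives through the open map $q$.
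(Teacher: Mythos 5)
Your proof is correct, but it follows a genuinely different route from the paper's. The paper handles the open half representation-theoretically: after lifting to a sequence $\xi_j\to\eta$ in $\gg^*$, it chooses polarizations $\pg_j$ at $\xi_j$ converging in the Grassmannian to a subalgebra $\mg$ subordinated to $\eta$, passes to the corresponding subgroups converging in the Fell topology of $\Kc(G)$, and invokes weak containment (Lemma~\ref{conv1} together with Fell's theorem) and the Corwin--Greenleaf--Gr\'elaud description of $\Ind_M^G(\ee^{\ie\eta}\vert_M)$ to conclude that the limit set of $\{\Oc_{\xi_j}\}$ contains all of $q(\eta+\mg^\perp)$, a connected set with more than one point. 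You instead extract a second limit point by purely geometric means: a positive-dimensional coadjoint orbit of a nilpotent group is connected and unbounded (your polynomial computation $t\mapsto\langle\xi,\ee^{-t\,\ad X}Y_0\rangle$ is the right, self-contained justification), so each orbit in the net must cross the sphere $\Sigma_\varepsilon$ about $\xi_0$, and compactness of that sphere plus continuity of the open quotient map $q$ produces a limit orbit $\Oc_{\zeta_\infty}\ne\{\xi_0\}=\Oc_{\xi_0}$ inside the closed set $S$, contradicting Hausdorffness. Both arguments hinge on the same final contradiction and on the same lifting step through the open map $q$ (which the paper also uses, implicitly, to produce its sequence in $\gg^*$). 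What the paper's machinery buys is a stronger structural conclusion --- the limit set contains the image of a whole affine subspace, in the spirit of their earlier result for flat orbits --- whereas your argument is more elementary and avoids polarizations, induced representations, and the Fell topology on subgroups entirely; it would be a legitimate simplification of this particular lemma.
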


\begin{proof} 
	It suffices to prove 
	the set $S\cap [\gg, \gg]^\perp $ is simultaneously 
	closed and open with respect to the relative topology of~$S$. 
	The set $[\gg, \gg]^\perp$ is a closed subset of $\gg^*/G$ therefore $S\cap [\gg, \gg]^\perp$ is a relatively closed subset of~$S$. 
	Thus it remains to prove that $S\setminus [\gg, \gg]^\perp$ is a relatively closed subset of~$S$. 
	
	Assume that $S\setminus [\gg, \gg]^\perp$ is not a relatively closed subset of~$S$, hence 
	 there exist
	a sequence $\{\xi_j\}_{j\in\NN}$ in $\gg^*$ and $\eta\in\gg^*$ with $\lim\limits_{j\in\NN}\xi_j=\eta$ and $\Oc_{\xi_j}\in S\setminus [\gg, \gg]^\perp$ for every $j\in\NN$ while 
	$\Oc_\eta =\{\eta\} \in S\cap [\gg, \gg]^\perp$. 
	
	Selecting a suitable subsequence, we may assume that
	there exists an integer  $d>0$ with $\dim\Oc_{\xi_j}=d$ for every 
	$j\in \NN$. 
	Selecting again a suitable subsequence, using that the Grassmann manifold $\Gr(\gg)$ is compact, we may assume that for every $j\in \NN$ there exists a polarization $\pg_j\subseteq\gg$ at $\xi_j\in\gg^*$ such that the limit $\mg:=\lim\limits \pg_j$ exists in~$\Gr(\gg)$.
	The subspace $\mg$ is a subalgebra of $\gg$ subordinated to $\eta$.
	If we denote by $P_j$ and $M$ the corresponding closed subgroups of $G$ with Lie algebras $\pg_j$ and $\mg$ respectively, 
	we have that $\lim P_j =M$ in $\Kc(G)$. (See \cite{BB18}.) 
	It follows by Lemma~\ref{conv1} and \cite[Thm. 4.3]{Fe64}
	that the set $L$ of limit points of the sequence $\{\Oc_{\xi_j}\}_{j\in \NN}$ contains the coadjoint orbits of all irreducible representations of $G$ that occur in the disintegration of the representation 
	$\mathop{\rm Ind}_M^G(\ee^{\ie\eta}\vert_M)$. 
	By \cite[Thm.~2 and p.~556]{CGG87} we thus get that
	$L\supseteq L_0:=q(\eta+\mg^\perp) $  and $\Oc_\eta \in L_0$.
	 Since $\dim\Oc_\eta=0$, the affine subspace $\eta+\mg^\perp$  is not contained in the coadjoint orbit of $\eta$,
	 hence $L_0$ is a connected set that contains more than one point. 
	 Thus $\eta$ is not an isolated point in $L$. 
	 On the other hand, since $S$ is a closed subset of $\widehat{G}$, it follows that $L\subseteq S$.
	We thus obtained a contradiction with the assumption that the relative topology of $S$ is Hausdorff, and this completes the proof.  
\end{proof}

	\begin{proposition}\label{maximal}
		For every  
		nilpotent Lie group~$G$,   
		the set of characters $[\gg,\gg]^\perp$ is a maximal element of $Q(\widehat G)$. 
	\end{proposition}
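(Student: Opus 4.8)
The plan is to exploit Lemma~\ref{30december2017} directly; the proposition turns out to be essentially a corollary of it. We already know from the discussion at the beginning of Section~\ref{Sect3} that $[\gg,\gg]^\perp\in Q(\widehat{G})$, so it only remains to establish maximality. To this end I would fix an arbitrary $S\in Q(\widehat{G})$ with $[\gg,\gg]^\perp\subseteq S$ and aim to prove $S=[\gg,\gg]^\perp$.

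First I would apply Lemma~\ref{30december2017} to $S$, which asserts that $S\cap[\gg,\gg]^\perp$ is simultaneously closed and open in the relative topology of $S$. Since by assumption $[\gg,\gg]^\perp\subseteq S$, this intersection equals $[\gg,\gg]^\perp$, so $[\gg,\gg]^\perp$ is a clopen subset of $S$.

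Next I would invoke connectedness. By the definition of $Q(\widehat{G})$ the set $S$ is connected, and the only clopen subsets of a connected space are the empty set and the whole space. Since the trivial representation, that is, the coadjoint orbit $\{0\}$, always lies in $[\gg,\gg]^\perp$, this subset is nonempty; hence connectedness forces $[\gg,\gg]^\perp=S$. This yields the maximality.

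The entire substance of the argument is carried by Lemma~\ref{30december2017}: once its openness-and-closedness statement is available there is no genuine obstacle, and the only point requiring care is the elementary observation that $[\gg,\gg]^\perp\ne\emptyset$, so that connectedness forces the clopen set to be all of $S$ rather than empty.
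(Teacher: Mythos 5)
Your argument is correct and coincides with the paper's own proof: both apply Lemma~\ref{30december2017} to a hypothetical $S\in Q(\widehat{G})$ containing $[\gg,\gg]^\perp$ to conclude that $[\gg,\gg]^\perp$ is clopen in $S$, and then use the connectedness of $S$ together with $[\gg,\gg]^\perp\ne\emptyset$ to force $S=[\gg,\gg]^\perp$. Nothing is missing.
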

	
	\begin{proof}
		If $S\in Q(\widehat G)$ and $[\gg,\gg]^\perp\subseteq S$, 
		then $[\gg,\gg]^\perp$ is a closed-open subset of $S$ by Lemma~\ref{30december2017}, since $[\gg,\gg]^\perp=(\gg^*/G)_d$ for $d=0$. 
		Using the fact that $S$ is connected and $[\gg,\gg]^\perp\ne\emptyset$ we then obtain $[\gg,\gg]^\perp=S$. 
		Hence $[\gg,\gg]^\perp$ is a maximal element of $Q(\widehat G)$. 
	\end{proof}
	
	\begin{remark}\label{order}
		\normalfont
		The set $Q(\widehat G)$ is not inductively ordered, as could be seen for instance  in the case when  $G$ is a Heisenberg group. 
		Thus not every element of $Q(\widehat G)$ is necessarily included in a maximal element.
		\end{remark}

	For the nilpotent Lie groups for which all coadjoint orbits are flat,  
	the result of Proposition~\ref{maximal} can be improved 
	by showing that $[\gg,\gg]^\perp$ is the unique maximal element of $Q(\widehat G)$; see Proposition~\ref{P3}. 
	That uniqueness property of $[\gg,\gg]^\perp$ turns out to be shared by some other groups (see Lemma~\ref{L5}). 
	It is then convenient to introduce the following terminology   
	that is motivated by Lemma~\ref{L2} below and by the formula for the real rank of group $C^*$-algebras 
	\begin{equation}\label{RR_eq}
	\RRa (C^*(G))=\dim[\gg,\gg]^\perp
	\end{equation}
	which holds for all exponential solvable Lie groups $G$, as shown in  \cite[Thm.~3.5]{BB16b}. 
(See also \cite{BB17}.)

\begin{definition}\label{D1}
	\normalfont
	Let $G$ be a nilpotent Lie group. 
	\begin{enumerate}[{\rm(a)}]
		\item\label{D1_item1} One says that $G$ is \emph{type~$\RRa$} if for every other nilpotent Lie group $H$ for which $C^*(G)$ and $C^*(H)$ are Morita equivalent, one has $\dim[\gg,\gg]^\perp=\dim[\hg,\hg]^\perp$. 
		\item\label{D1_item2} One says that $G$ is \emph{type~$\RRa_1$} if $[\gg,\gg]^\perp$ is the unique maximal element of $Q(\widehat G)$. 
	\end{enumerate}
\end{definition}

\begin{lemma}\label{L2}
	Let $G_1$ be nilpotent Lie group and assume that 
	$G_1$ is type~$\RRa_1$. 
	If~$G_2$ is another nilpotent Lie group and one has a homeomorphism
	$\psi\colon\widehat{G}_1\to\widehat{G}_2$,  
	then $\psi([\gg_1,\gg_1]^\perp)=[\gg_2,\gg_2]^\perp$ 
	and $\dim[\gg_1,\gg_1]^\perp=\dim[\gg_2,\gg_2]^\perp$.
	In particular,  if a nilpotent Lie group $G$ is type~$\RRa_1$, then it is type~$\RRa$.
\end{lemma}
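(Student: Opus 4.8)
The plan is to exploit the fact that the poset $Q(\widehat G)$ is a purely topological invariant of the space $\widehat G$, so that it transports along the homeomorphism $\psi$, and then to pin down the image of the set of characters by combining Proposition~\ref{maximal} with the type~$\RRa_1$ hypothesis on $G_1$.

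First I would observe that $\psi\colon\widehat{G}_1\to\widehat{G}_2$ induces an order isomorphism $Q(\widehat{G}_1)\to Q(\widehat{G}_2)$, $S\mapsto\psi(S)$, with inverse $T\mapsto\psi^{-1}(T)$. Indeed, $\psi$ carries closed sets to closed sets and connected sets to connected sets, the relative topology of $\psi(S)$ is homeomorphic (via $\psi\vert_S$) to that of $S$ and hence Hausdorff exactly when the latter is, and $\psi$ preserves inclusions in both directions. Consequently $\psi$ maps the maximal elements of $Q(\widehat{G}_1)$ bijectively onto those of $Q(\widehat{G}_2)$, and in particular the property of possessing a \emph{unique} maximal element is preserved.

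Since $G_1$ is type~$\RRa_1$, the set $[\gg_1,\gg_1]^\perp$ is the unique maximal element of $Q(\widehat{G}_1)$, so $\psi([\gg_1,\gg_1]^\perp)$ is the unique maximal element of $Q(\widehat{G}_2)$. On the other hand, Proposition~\ref{maximal} guarantees that $[\gg_2,\gg_2]^\perp$ is a maximal element of $Q(\widehat{G}_2)$. By the uniqueness just established these two maximal elements must coincide, which gives $\psi([\gg_1,\gg_1]^\perp)=[\gg_2,\gg_2]^\perp$. I expect this identification to be the crux of the argument: without the type~$\RRa_1$ hypothesis the poset $Q(\widehat{G}_2)$ may have several maximal elements (cf.\ Remark~\ref{order}), so that the image of $[\gg_1,\gg_1]^\perp$ would merely be \emph{some} maximal element and need not be the character set of $G_2$ at all.

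It then remains to read off the equality of dimensions. The restriction $\psi\vert_{[\gg_1,\gg_1]^\perp}$ is a homeomorphism of $[\gg_1,\gg_1]^\perp$ onto $[\gg_2,\gg_2]^\perp$. For each $i$, the set of characters $[\gg_i,\gg_i]^\perp$, regarded inside $\gg_i^*/G_i$, is exactly the set of one-point coadjoint orbits, that is, the fixed-point set of the coadjoint action, so the quotient map restricts to a homeomorphism of the linear subspace $[\gg_i,\gg_i]^\perp\subseteq\gg_i^*$ onto it; hence $[\gg_i,\gg_i]^\perp$ is homeomorphic to $\RR^{d_i}$ with $d_i:=\dim[\gg_i,\gg_i]^\perp$. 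By invariance of dimension it follows that $d_1=d_2$, as required. Finally, for the concluding assertion, if $G$ is type~$\RRa_1$ and $H$ is a nilpotent Lie group with $C^*(G)$ Morita equivalent to $C^*(H)$, then Morita equivalence yields a homeomorphism $\widehat{G}\simeq\widehat{H}$; applying what was just proved gives $\dim[\gg,\gg]^\perp=\dim[\hg,\hg]^\perp$, which is precisely the statement that $G$ is type~$\RRa$.
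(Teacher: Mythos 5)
Your proposal is correct and follows essentially the same route as the paper's own proof: transport the poset $Q(\widehat{G}_1)$ along $\psi$, use uniqueness of the maximal element (type~$\RRa_1$) together with Proposition~\ref{maximal} to identify $\psi([\gg_1,\gg_1]^\perp)$ with $[\gg_2,\gg_2]^\perp$, conclude equality of dimensions by invariance of domain, and derive the type~$\RRa$ claim from the homeomorphism of duals induced by Morita equivalence. The extra details you supply (why $S\mapsto\psi(S)$ is an order isomorphism, why the character set is homeomorphic to a vector space) are correct and only make explicit what the paper leaves implicit.
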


\begin{proof}
	The mapping 
	$Q(G_1)\to Q(G_2)$, $S\mapsto \psi(S)$, 
	is an isomorphism of partially ordered sets, 
	since $\psi\colon\widehat{G}_1\to\widehat{G}_2$ is a homeomorphism.
	Hence $S\in Q(G_1)$ is a maximal element if and only if 
	$\psi(S)$ is a maximal element of $Q(G_2)$. 
	
	Since $G_1$ is type~$\RRa_1$, 
	it follows
	that the set $Q(G_1)$ has 
	exactly one maximal element, namely $[\gg_1,\gg_1]^\perp$. 
	Therefore the set $Q(G_2)$ in turn has exactly one maximal element, namely $\psi([\gg_1,\gg_1]^\perp)$. 
	On the other hand, we know from Proposition~\ref{maximal} that $[\gg_2,\gg_2]^\perp$ is a maximal element of $Q(G_2)$, 
	hence $\psi([\gg_1,\gg_1]^\perp)=[\gg_2,\gg_2]^\perp$. 
	Along with the fact that $\psi$ is a homeomorphism, 
	this further implies that $\psi\vert_{[\gg_1,\gg_1]^\perp}\colon [\gg_1,\gg_1]^\perp\to[\gg_2,\gg_2]^\perp$ is a homeomorphism. 
	Moreover,  $[\gg_1,\gg_1]^\perp$ and $[\gg_2,\gg_2]^\perp$ are vector spaces, hence it  follows by Brouwer's theorem on invariance of domain that $\dim[\gg_1,\gg_1]^\perp=\dim[\gg_2,\gg_2]^\perp$.

	For the second assertion, if $H$ is a nilpotent Lie group for which $C^*(G)$ is Morita equivalent to $C^*(H)$, then $\widehat{G}$ and $\widehat{H}$ are homeomorphic by \cite[Cor. 3.33]{RaWi98}, 
	hence $\dim[\gg,\gg]^\perp=\dim[\hg,\hg]^\perp$ by Lemma~\ref{L2}. 
	This concludes the proof.
\end{proof}

\begin{proposition}\label{P3}
	If $G$ is a nilpotent Lie group for which all its coadjoint orbits are flat, then $G$ is type~$\RRa_1$. 
\end{proposition}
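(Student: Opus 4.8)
The plan is to prove that $[\gg,\gg]^\perp$ is not merely \emph{a} maximal element of $Q(\widehat G)$ but the only one (that is, that $G$ is type~$\RRa_1$). Since $[\gg,\gg]^\perp$ is closed, Hausdorff and connected (being a linear subspace of $\gg^*/G$) and is maximal by Proposition~\ref{maximal}, it will suffice to show that every maximal element $S\in Q(\widehat G)$ satisfies $S\subseteq[\gg,\gg]^\perp$; maximality of $S$ then forces $S=[\gg,\gg]^\perp$.

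The key step, and the only place where flatness is really used, will be to show that each $S\in Q(\widehat G)$ consists of coadjoint orbits of a single dimension. I would set $d:=\min\{\dim\Oc\mid\Oc\in S\}$ and $S_{>d}:=\{\Oc\in S\mid\dim\Oc>d\}$. As the orbit dimension is lower semicontinuous, $S_{>d}$ is open in $S$, so it is enough to check that it is also closed in $S$; connectedness of $S$ then yields $S=S\setminus S_{>d}$. If $S_{>d}$ failed to be closed in $S$, then, lifting through the open continuous map $q$ exactly as in the proof of Lemma~\ref{30december2017}, I would obtain a net $\xi_j\to\eta$ in $\gg^*$ with $\Oc_{\xi_j}\in S_{>d}$ and $\Oc_\eta\in S$ of dimension $d$. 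After passing to a subnet I may assume that $\dim\Oc_{\xi_j}=e>d$ is constant and that $\gg(\xi_j)\to\hg$ in $\Gr(\gg)$, so $\dim\hg=\dim\gg-e$ and $\dim\hg^\perp=e$. Flatness gives $\Oc_{\xi_j}=\xi_j+\gg(\xi_j)^\perp$, so for every $w\in\hg^\perp$ I can pick $w_j\in\gg(\xi_j)^\perp$ with $w_j\to w$, whence $\xi_j+w_j\in\Oc_{\xi_j}$ converges to $\eta+w$; by continuity of $q$ the whole connected set $q(\eta+\hg^\perp)$ consists of limit points of the subnet. Because $\Oc_\eta=\eta+\gg(\eta)^\perp$ is flat and $\dim\gg(\eta)=\dim\gg-d>\dim\gg-e=\dim\hg$, one has $\gg(\eta)\not\subseteq\hg$, hence $\hg^\perp\not\subseteq\gg(\eta)^\perp$ and $\eta+\hg^\perp\not\subseteq\Oc_\eta$; thus $q(\eta+\hg^\perp)$ contains more than one point. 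Since the subnet lies in the closed set $S$, these limit points all lie in $S$, contradicting the Hausdorffness of $S$. Therefore $S$ has constant orbit dimension $d$.

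It then remains to exclude $d>0$ for the maximal $S$. The case $d=0$ is exactly $S\subseteq[\gg,\gg]^\perp$, which finishes the proof; so suppose $d>0$. Then $S$ is contained in the stratum $\Gamma_d:=(\gg^*/G)_d$, which is Hausdorff in its relative topology by the finite-length $\RR$-space structure of $\widehat G$, and $S$ contains no character. I would fix $\Oc_\xi\in S$ and use the structural ray $\psi_\xi\colon\RR\to\gg^*/G$, $t\mapsto t\cdot\Oc_\xi=\Oc_{t\xi}$, of the special $\RR$-space $\widehat G$; for $t\ne0$ the orbit $\Oc_{t\xi}$ again has dimension $d$ (since $\gg(t\xi)=\gg(\xi)$), whereas $\psi_\xi(0)=x_0$ is the trivial character. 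The set $I_\xi:=\{t>0\mid\Oc_{t\xi}\in S\}$ is closed in $(0,\infty)$ and contains $1$, so $a_\xi:=\inf I_\xi\ge0$ is well defined. If $a_\xi>0$, then $\Oc_{a_\xi\xi}\in S$ and the compact connected arc $J:=\psi_\xi([a_\xi/2,a_\xi])\subseteq\Gamma_d$ meets $S$ but is not contained in it, so $S\cup J$ would be a closed, connected, Hausdorff subset of $\Gamma_d$ strictly larger than $S$, contradicting maximality. Hence $a_\xi=0$, and there are $t_k\to0^+$ with $\Oc_{t_k\xi}\in S$; since $t_k\xi\in\Oc_{t_k\xi}\in S$ and $t_k\xi\to0$, continuity of $q$ and closedness of $S$ give $x_0=q(0)\in S$. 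This contradicts $d>0$, completing the argument.

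The main obstacle is the dimension-drop estimate in the second paragraph: I must guarantee that a net of higher-dimensional orbits inside $S$ approaching a lower-dimensional orbit produces at least two limit points. Flatness is precisely what makes this transparent, since it realizes each orbit as the affine subspace $\xi+\gg(\xi)^\perp$ and identifies the limiting configuration with the affine subspace $\eta+\hg^\perp$ of dimension $e>d=\dim\Oc_\eta$. The technical care will lie in passing to a subnet along which the isotropy subalgebras converge in $\Gr(\gg)$, and in lifting the convergence of orbits to convergence of representatives through the open map $q$, as is already done in Lemma~\ref{30december2017}; by contrast, the scaling argument that rules out $d>0$ uses only the special $\RR$-space structure and the Hausdorffness of the stratum $\Gamma_d$.
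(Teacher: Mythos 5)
Your proof is correct and follows the same skeleton as the paper's: show that every maximal $S\in Q(\widehat G)$ lies in a single dimension stratum, then use the scaling ray $t\mapsto\Oc_{t\xi}$ together with maximality to force the infimum of the ray's intersection with $S$ down to $0$, so that the trivial character lands in $S$ and the stratum must be the zero-dimensional one. The second half of your argument is essentially verbatim the paper's (the paper uses the arc $\gamma([t_0/2,1])$ anchored at $\gamma(1)\in S$ where you use $\psi_\xi([a_\xi/2,a_\xi])$ anchored at $\Oc_{a_\xi\xi}\in S$; both work). The one genuine difference is in the first half: the paper simply cites \cite[Lemma 6.8(2)]{BBL17} for the fact that a connected closed Hausdorff subset of $\widehat G$ has constant orbit dimension when all orbits are flat, whereas you reprove it from scratch via lower semicontinuity of the orbit dimension, Grassmannian convergence of the isotropy algebras, and the observation that $e>d$ forces $\eta+\hg^\perp\not\subseteq\Oc_\eta$, producing two limit points inside $S$. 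That makes your write-up more self-contained (and is the same mechanism the paper uses in Lemma~\ref{30december2017} for the non-flat case), at the cost of redoing the lifting-of-nets argument. One small imprecision: you justify the Hausdorffness of the stratum $(\gg^*/G)_d$ by appeal to ``the finite-length $\RR$-space structure of $\widehat G$,'' but the strata $\Gamma_j$ of that structure need not coincide with the dimension strata; the correct reference here, and the one the paper uses, is \cite[Lemma 6.8(1)]{BBL17} (separation of same-dimensional flat orbits), so you should cite that instead.
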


\begin{proof} 
		It follows by Proposition~\ref{maximal} that $[\gg,\gg]^\perp$ is a maximal element of $Q(\widehat G)$. 
		Let $S$ be an arbitrary maximal element of $Q(\widehat G)$.  
		We prove that $S\subseteq[\gg,\gg]^\perp$ and then,  the maximality of $S$ implies that  $S=[\gg,\gg]^\perp$. 
		
		Since $S$ is connected, it follows by 
		\cite[Lemma 6.8(2)]{BBL17} 
		that there exists an even integer $d\ge 0$ such that $S$ is contained in the set $(\gg^*/G)_d$ of all $d$-dimensional coadjoint orbits. 
		We may assume $S\ne\emptyset$ and then we may select $\xi\in\gg^*$ with $\Oc_\xi\in S$. 
		Let us define the continuous path $\gamma\colon\RR\to \gg^*/G$, 
		$\gamma(t):=\Oc_{t\xi}$. 
		One has $\gamma(\RR^\times)\subseteq (\gg^*/G)_d$. 
		
		Since $S$ is closed, it follows that 
		$A:=\{t\in(0,\infty)\mid \gamma(t)\in S\}$ is a closed subset of $(0,\infty)$. 
		For $t_0:=\inf A$ one has $t_0\le 1$ since $\gamma(1)=\Oc_\xi\in S$. 
		
		If $t_0>0$ then we define $S_0:=\gamma([t_0/2,1])$, 
		which is a compact subset of  $(\gg^*/G)_d$,  
		since the relative topology of $(\gg^*/G)_d$ is Hausdorff by \cite[Lemma 6.8(1)]{BBL17}. 
		In particular $S_0$ is a closed subset of $\gg^*/G$, 
		and it is clear that $S_0$ is connected. 
		Moreover, $\gamma(1)\in S_0\cap S$, hence  
		$S_0\cup S$ is a closed connected subset of $\gg^*/G$.  
		Since $S_0\cup S\subseteq (\gg^*/G)_d$, 
		the relative topology of $S_0\cup S$ is Hausdorff,  
		and thus $S_0\cup S\in Q(S)$. 
		On the other hand, by the definition of $t_0$ one has $S_0\not\subseteq S$, 
		hence $S\not\subseteq S_0\cup S$, 
		which is a contradiction with the hypothesis that $S$ is a maximal element of $Q(\widehat G)$. 
		
		Consequently $t_0=0$, and it follows that  there exists a sequence $\{t_n\}_{n\in\NN}$ in $(0,\infty)$ with $\lim\limits_{n\in\NN}t_n=0$ and $\Oc_{t_n\xi}=\gamma(t_n)\in S$ for every $n\in\NN$. 
		Since $\lim\limits_{n\in\NN}t_n\xi=0$, 
		the coadjoint orbit $\Oc_0=\{0\}$ is a limit point of
		the sequence $\{\Oc_{t_n\xi}\}_{n\in\NN}$.
		The set $S$ is a closed subset of 
		$\gg^*/G$,
		and thus $\Oc_0\in S$. 
		We noted above that $S\subseteq(\gg^*/G)_d$, 
		hence $d=0$, that is, $S\subseteq[\gg,\gg]^\perp$.
		This concludes the proof.
\end{proof}

	It follows by \eqref{RR_eq} and Definition~\ref{D1}\eqref{D1_item1} that real rank of $C^*$-algebras of nilpotent Lie groups of type~$\RRa$ is preserved by Morita equivalence. 
	As it is well known, this not the case for arbitrary $C^*$-algebras (see for instance \cite[Lemma~3.1 and Rem.~2.2]{BB16b}).

	\section{Heisenberg groups are uniquely determined by their spectrum}\label{proof-th}

	The present section is devoted to the
proof of Theorem~\ref{2H}. 
	\begin{lemma}\label{GCR}
		Let $G$ be an exponential solvable Lie group.
		Then the following assertions are equivalent:
		\begin{enumerate}[{\rm (i)}]
			\item $G$ is a liminary group.
			\item Every coadjoint orbit of $G$ is a closed subset of $\gg^*$.
			\item $G$ is a nilpotent Lie group.
			\end{enumerate}
		\end{lemma}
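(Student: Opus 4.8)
The plan is to prove the cycle of implications (iii) $\Rightarrow$ (i) $\Rightarrow$ (ii) $\Rightarrow$ (iii), treating the first two as essentially classical and concentrating the real work on the last. For (iii) $\Rightarrow$ (i): a connected simply connected nilpotent Lie group is liminary, which is the classical theorem of Kirillov and Dixmier, and I would simply cite it (e.g. \cite{CG90}, \cite{Dix77}). This already links nilpotency to the representation-theoretic side, so that the remaining task is to tie liminarity to the geometry of coadjoint orbits and then back to nilpotency.

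For (i) $\Rightarrow$ (ii): if $C^*(G)$ is liminary, then for every irreducible representation $\pi$ on $\Hc_\pi$ one has $\pi(C^*(G))=\Kc(\Hc_\pi)$, so $C^*(G)/\Ker\pi\simeq\Kc(\Hc_\pi)$ is simple and $\Ker\pi$ is a maximal, hence closed, point of $\Prim(C^*(G))$; thus $\widehat{G}$ is a $T_1$ space. Under the Kirillov--Bernat homeomorphism $\gg^*/G\simeq\widehat{G}$ that the preliminaries grant for exponential Lie groups, $T_1$-ness means that each point $\Oc_\xi$ of $\gg^*/G$ is closed; since the quotient map $q$ is continuous and $q^{-1}(\Oc_\xi)=\Oc_\xi$, each coadjoint orbit is closed in $\gg^*$, which is exactly (ii).

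The heart of the matter is (ii) $\Rightarrow$ (iii), which I would prove by contraposition: assuming $G$ is not nilpotent, I produce a non-closed coadjoint orbit. By Engel's theorem the non-nilpotency of $\gg$ yields some $X\in\gg$ for which $\ad X$ has a nonzero eigenvalue $\lambda\in\CC$; the exponential hypothesis forbids nonzero purely imaginary eigenvalues of any $\ad Z$ (Dixmier's criterion), so $\Re\lambda\ne0$. The one-parameter coadjoint flow $t\mapsto\Ad^*(\exp tX)$ is linear with infinitesimal generator $\pm(\ad X)^*$ (the sign depending only on the convention for $\Ad^*$), whose eigenvalues are $\mp\lambda$; after replacing $X$ by $-X$ if necessary we may assume this generator has an eigenvalue of strictly negative real part. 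Let $W^-\subseteq\gg^*$ be the sum of the real generalized eigenspaces of this generator for eigenvalues of negative real part; it is a nonzero subspace, invariant under the flow, on which $\Ad^*(\exp tX)\to0$ as $t\to+\infty$. For any $\xi_0\in W^-\setminus\{0\}$ the orbit $\Oc_{\xi_0}$ contains the curve $t\mapsto\Ad^*(\exp tX)\xi_0$, which tends to $0$; since $\{0\}$ is a coadjoint orbit distinct from $\Oc_{\xi_0}$, we get $0\in\overline{\Oc_{\xi_0}}\setminus\Oc_{\xi_0}$, so $\Oc_{\xi_0}$ is not closed, contradicting (ii). Together with (iii) $\Rightarrow$ (ii) (nilpotency makes all $\ad X$ nilpotent, hence $\Ad^*$ unipotent and all orbits closed, as in \cite{CG90}), this also confirms (ii) $\iff$ (iii) directly.

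The main obstacle is precisely this last construction: from mere non-nilpotency one must locate a direction in $\gg^*$ that is genuinely contracted by a one-parameter coadjoint subgroup. Two ingredients make it work: the exponential condition, which upgrades ``nonzero eigenvalue'' to ``nonzero real part'' and thereby rules out the elliptic behaviour that keeps orbits closed; and the passage to the coadjoint side, so that a nonreal-part eigenvalue of $\ad X$ becomes a contracting direction of the flow. Some care is needed to handle complex eigenvalues and Jordan blocks, but the standard estimate $\|\ee^{tN}\xi_0\|\le C(1+t^k)\ee^{-\delta t}\|\xi_0\|$ on $W^-$ makes the convergence to $0$ automatic, so no delicate analysis is required once the eigenspace $W^-$ has been isolated.
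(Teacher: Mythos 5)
Your proof is correct, but it follows a genuinely different route from the paper's. The paper does not run a cycle of implications: it first notes that an exponential group is type~I (hence postliminary), then invokes the Auslander--Moore theorem to convert assertion (i) into the Lie-algebraic ``type~R'' condition (all eigenvalues of every $\ad_{\gg}x$ purely imaginary), and combines this with Dixmier's exponential criterion (no nonzero purely imaginary eigenvalues) to conclude that (i) is equivalent to all eigenvalues vanishing, i.e.\ to (iii); the equivalence of (i) and (ii) is then outsourced entirely to \cite[Thm. 5.3.31]{FuLu15} and \cite[Ex.~9.5.3]{Dix77}. You instead prove (iii)$\Rightarrow$(i)$\Rightarrow$(ii)$\Rightarrow$(iii), citing only the classical Kirillov--Dixmier theorem for the first arrow, using the standard fact that liminarity forces primitive ideals to be maximal (hence a $T_1$ dual, hence closed orbits via the Kirillov--Bernat homeomorphism) for the second, and doing the real work in the third by locating, via Engel's theorem plus the same exponential eigenvalue criterion, a one-parameter coadjoint flow with a nontrivial contracting subspace $W^-$, so that any nonzero $\xi_0\in W^-$ has $0\in\overline{\Oc_{\xi_0}}\setminus\Oc_{\xi_0}$. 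Both arguments hinge on the identical key input --- exponentiality upgrades ``nonzero eigenvalue'' to ``eigenvalue with nonzero real part'' --- but yours replaces the Auslander--Moore machinery with an elementary dynamical construction, which makes the proof more self-contained at the price of being somewhat longer; the paper's version is shorter but rests on heavier cited results.
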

	
	\begin{proof}
		The group $G$ is  exponential, hence 
		it is a connected, simply connected, solvable Lie group of type~I 
		(see \cite[Sect. 0, Rem. 3]{AuKo71}).
		Then  Glimm's characterization of separable $C^*$-algebras of type~I (see \cite[\S 9.1]{Dix77})
		implies that $G$  is postliminary.
		By \cite[Ch. V, Thm. 1--2]{AuMo66} we get that the first assertion is equivalent with the fact that $G$ is of  type R, that is, for every $x\in\gg$, all the eigenvalues of the
		operator $\ad_{\gg} x\colon\gg_{\CC}\to\gg_{\CC}$
		are purely imaginary.
		
		On the other hand since $G$ is an
		exponential Lie group,
		it follows that for every $x\in\gg$, the operator 
		$\ad_G x\colon\gg_{\CC}\to\gg_{\CC}$
		has no nonzero purely imaginary eigenvalues (see \cite[Prop. 5.2.13, Thm. 5.2.16]{FuLu15}),
		hence the first and third assertion are equivalent.
		Finally, the equivalence between the first and second assertion follows 
		from \cite[Thm. 5.3.31]{FuLu15} and \cite[Ex.~9.5.3]{Dix77}.
		\end{proof}
	
	The next lemma treats the case of abelian Lie groups. 
	
	\begin{lemma}\label{abelian}
		Let $G_1$ be an abelian Lie group. 
		If $G_2$ is an exponential Lie group with $\widehat{G_1}$ homeomorphic to  $\widehat{G_2}$, then the Lie groups $G_1$ and $G_2$ are isomorphic.
		\end{lemma}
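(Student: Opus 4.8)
The plan is to turn the topological hypothesis into an algebraic conclusion in two stages: first force $G_2$ to be \emph{abelian}, and then match dimensions by invariance of domain. Since $G_1$ is a connected simply connected abelian Lie group, it is isomorphic to a vector group $\RR^n$, and its unitary dual is the Pontryagin dual, so $\widehat{G_1}\cong\RR^n$; in particular $\widehat{G_1}$ is a connected Hausdorff space. By the assumed homeomorphism, $\widehat{G_2}$ is therefore connected and Hausdorff as well.

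The first key step is to upgrade the exponential group $G_2$ to a nilpotent one. Since $\widehat{G_2}$ is Hausdorff it is $T_1$, so every point of the spectrum of $C^*(G_2)$ is closed; by Dixmier's characterization of liminary $C^*$-algebras through the $T_1$ separation of their spectrum (see \cite{Dix77}), this means $C^*(G_2)$, and hence $G_2$, is liminary. At this point Lemma~\ref{GCR} applies directly and yields that $G_2$ is a nilpotent Lie group. This is precisely the place where I use that $G_2$ is exponential, so that Lemma~\ref{GCR} is available.

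With $G_2$ now nilpotent, I would invoke Proposition~\ref{maximal}, which says that the set of characters $[\gg_2,\gg_2]^\perp$ is a maximal element of $Q(\widehat{G_2})$. The decisive observation is that the whole space $\widehat{G_2}$ itself belongs to $Q(\widehat{G_2})$: it is closed in itself, it is connected since $\widehat{G_2}\cong\RR^n$, and its relative topology is Hausdorff by hypothesis. As $[\gg_2,\gg_2]^\perp\subseteq\widehat{G_2}$ with both sets in $Q(\widehat{G_2})$, the maximality of $[\gg_2,\gg_2]^\perp$ forces $[\gg_2,\gg_2]^\perp=\widehat{G_2}$. Consequently every coadjoint orbit of $G_2$ is a single point, that is, $[\gg_2,\gg_2]=\{0\}$ and $G_2$ is abelian, so $G_2\cong\RR^m$ for some $m$.

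It remains only to identify $m$ with $n$. Since $G_2$ is abelian we again have $\widehat{G_2}\cong\RR^m$, and the hypothesis gives a homeomorphism $\RR^m\cong\RR^n$; Brouwer's theorem on invariance of domain then yields $m=n$, whence $G_2\cong\RR^n\cong G_1$, as desired. I expect the only genuinely delicate point to be the passage from a Hausdorff dual to nilpotency of $G_2$, i.e.\ recognizing that a $T_1$ spectrum already forces liminarity and thereby makes Lemma~\ref{GCR} applicable; once $G_2$ is known to be nilpotent, the remainder is a short formal argument resting on Proposition~\ref{maximal} together with the trivial remark that $\widehat{G_2}$ lies in $Q(\widehat{G_2})$.
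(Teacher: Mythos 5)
Your proof is correct, but it takes a genuinely different route from the paper's. The paper settles the matter in one stroke by quoting Baggett and Sund \cite[Thm.~1]{BS81}: a connected group with Hausdorff unitary dual is a compact extension of a vector group, and an exponential group has no nontrivial compact subgroups, so $G_2$ is already a vector group; invariance of domain then finishes exactly as in your last step. You instead stay inside the paper's own machinery: Hausdorff dual gives a $T_1$ spectrum, hence (since $C^*(G_2)$ is postliminary) a liminary $C^*$-algebra, hence a nilpotent $G_2$ by Lemma~\ref{GCR}; then the observation that $\widehat{G_2}$ itself lies in $Q(\widehat{G_2})$, played against the maximality of $[\gg_2,\gg_2]^\perp$ from Proposition~\ref{maximal}, forces every irreducible representation to be a character, so $G_2$ is abelian. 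Both steps are sound: the postliminarity needed for the Dixmier criterion (closed points characterize liminarity only within the postliminary class) is available either from Glimm's theorem, since $\widehat{G_2}$ is in particular $T_0$ and $C^*(G_2)$ is separable, or from exponentiality as in the proof of Lemma~\ref{GCR}; and Proposition~\ref{maximal} is established in Section~\ref{Sect3}, before this lemma, so there is no circularity. As for what each approach buys: the paper's argument is shorter but leans on the external structure theorem of \cite{BS81}, whereas yours is self-contained relative to the paper's toolkit at the cost of invoking Proposition~\ref{maximal}, whose proof rests on the nontrivial convergence analysis of Lemma~\ref{30december2017} --- rather heavier artillery than is strictly needed to see that a noncommutative nilpotent Lie group has a non-Hausdorff dual.
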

	
	\begin{proof}
	Since 	 $\widehat{G_1}$ homeomorphic to  $\widehat{G_2}$, it follows that 
	$\widehat{G_2}$ must be Hausdorff.
	Therefore, by \cite[Thm.~1]{BS81}, the group $G_2$ must be a compact extension of vector group. Since $G_2$ is assumed to be exponential, it cannot have non-trivial compact subgroups, hence it is a vector group. 
	
	Thus, the duals of $G_1$ and $G_2$ are vector spaces of dimensions $\dim G_1$ and $\dim G_2$, respectively. 
	By Brouwer's theorem on the invariance of domain we then obtain that
	$\dim G_1=\dim G_2$, hence $G_1$ and $G_2$ are isomorphic. 
	\end{proof}

	\begin{proof}[Proof of Theorem~\ref{2H}]
		We may assume by Lemma~\ref{abelian} that $G_1$ is not abelian, 
		therefore $\dim[\gg_1,\gg_1]=1$. 
		
		Assume that $\psi\colon\widehat{G}_1\to\widehat{G}_2$ is a homeomorphism.  
		Since $G_1$ is a nilpotent Lie group, 
		the singleton subsets of $\widehat{G}_1$ are closed, 
		hence the singleton subsets of $\widehat{G}_2$ are also closed. 
		The group $G_2$ is an exponential Lie group, thus  
		Lemma~\ref{GCR} implies now that $G_2$ is a nilpotent Lie group. 
		
		Since $\dim[\gg_1, \gg_1]=1$ it follows that there exist some integers 
		$d_1\ge 1$ and $k_1\ge 0$ with $G_1=A_{k_1}\times H_{2d_1+1}$.
		Then all the coadjoint orbits of $G_1$ are flat and we then obtain by Proposition~\ref{P3} and Lemma~\ref{L2} 
		that 
		$\psi([\gg_1,\gg_1]^\perp)=[\gg_2,\gg_2]^\perp$.  
		In particular, for 
		$\Gamma^{(j)}_1:=\widehat{G}_j\setminus[\gg_j,\gg_j]^\perp$ 
		we also obtain $\psi(\Gamma^{(1)}_1)=\Gamma^{(2)}_1$. 
		Moreover, 
		$\widehat{G_1}=\widehat{A_{k_1}}\times\widehat{H_{2d_1+1}}$, 
		and $\Gamma^{(1)}_1=\widehat{A_{k_1}}\times\RR^\times$ (up to a homeomorphism), 
		which is a disconnected topological space. 
		Therefore the set $\Gamma^{(2)}_1$ must be disconnected as well. 
		
		Assume now that  $\dim[\gg_2,\gg_2]\ge 2$. 
		Then $\gg_2^*\setminus[\gg_2,\gg_2]^\perp$ is connected, 
		hence its image through the quotient map $q\colon\gg_2^*\to\gg_2^*/G_2$ is connected. 
		That is, the set $\Gamma^{(2)}_1$ is connected, 
		which is a contradiction. 
		Consequently $\dim[\gg_2,\gg_2]\le 1$, and then there exist some integers 
		$k_2,d_2\ge 0$ with $G_2=A_{k_2}\times H_{2d_2+1}$ (up to a group isomorphism). 
		One actually has $d_2\ge 1$ since $\Gamma^{(2)}_1=\psi(\Gamma^{(1)}_1)\ne\emptyset$. 
		One then has $\Gamma^{(2)}_1=\widehat{A_{k_2}}\times\RR^\times$ and  this set is homeomorphic to $\widehat{A_{k_1}}\times\RR^\times$. 
		Thus we obtain $k_1=k_2$ by the theorem on invariance of domain. 
		Since $\dim[\gg_j,\gg_j]^\perp=2d_j+k_k$, we then also obtain $d_1=d_2$, and this completes the proof. 
	\end{proof}

	\section{Application to nilpotent Lie groups of dimension $\le5$}\label{proof-cor}
	
This section is devoted to the proof of Theorem~\ref{5D}.

	\subsection{Nilpotent Lie groups of whose all nontrivial coadjoint orbits have the same dimension}

	In order to prepare for the proof of Theorem~\ref{5D}, we study here a class of more tractable nilpotent Lie groups, namely those 
	whose all nontrivial coadjoint orbits have the same, maximal, dimension. 
	We make first a definition.
	
	\begin{definition}\label{def_Tc}
		\normalfont
		A  nilpotent Lie algebra $\gg$ is called \emph{of class~$\Tc$} if 
		for all $\xi\in\gg^*\setminus[\gg,\gg]^\perp$ one has $\dim\gg(\xi)=\ind\gg=\min\{\dim \gg(\eta) \mid \eta \in \gg^*\}$. 
		(See also Lemma~\ref{27december2017-1555}.)
		If this is the case, then the nilpotent Lie group~$G$ is called \emph{of class  $\Tc$}. 
	\end{definition}

	\begin{lemma}\label{8feb2018}
		Let $\gg$ be a finite-dimensional real Lie algebra and let $\Jc(\gg)$ be the set of its ideals, that is, 
		$$\Jc(\gg):=\{\hg\in\Gr(\gg)\mid [\gg,\hg]\subseteq\hg\}.$$
		Then $\Jc(\gg)$ is a closed subset of $\Gr(\gg)$. 
	\end{lemma}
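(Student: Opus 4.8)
The plan is to show that $\Jc(\gg)$ is closed by checking that it is stable under passing to limits, after reducing to a fixed dimension. First I would fix an inner product on $\gg$ and use the standard identification of each Grassmannian $\Gr_k(\gg)$ with the set of rank-$k$ orthogonal projections on $\gg$, under which convergence of subspaces in $\Gr_k(\gg)$ is equivalent to (operator-norm, equivalently entrywise) convergence of the associated projections. Since $\Gr(\gg)=\bigsqcup_{k=0}^{\dim\gg}\Gr_k(\gg)$ is a finite disjoint union of clopen pieces (the dimension of a subspace is locally constant), it suffices to prove that $\Jc(\gg)\cap\Gr_k(\gg)$ is closed in $\Gr_k(\gg)$ for each $k$; in particular any convergent sequence of ideals may be assumed to consist of subspaces all of the same dimension, with limit of that same dimension.

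Next I would record the two elementary consequences of projection convergence that drive the argument. Suppose $\hg_j\to\hg$ in $\Gr_k(\gg)$, with corresponding orthogonal projections $P_j\to P$. Then: (a) for every $Y\in\hg$ the vectors $Y_j:=P_j Y$ lie in $\hg_j$ and satisfy $Y_j\to PY=Y$; and (b) if $z_j\in\hg_j$ for all $j$ and $z_j\to z$, then $z=\lim_j P_j z_j=Pz\in\hg$, because $P_j z_j-Pz=P_j(z_j-z)+(P_j-P)z\to 0$ while $P_j z_j=z_j\to z$.

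With these in hand the ideal condition passes to the limit directly. Take $\hg_j\in\Jc(\gg)$ with $\hg_j\to\hg$ in $\Gr_k(\gg)$, and fix arbitrary $X\in\gg$ and $Y\in\hg$. Choose $Y_j\in\hg_j$ with $Y_j\to Y$ as in (a). Since each $\hg_j$ is an ideal, $[X,Y_j]\in[\gg,\hg_j]\subseteq\hg_j$, and by continuity of the Lie bracket $[X,Y_j]\to[X,Y]$; hence (b) gives $[X,Y]\in\hg$. As $X\in\gg$ and $Y\in\hg$ were arbitrary, $[\gg,\hg]\subseteq\hg$, i.e.\ $\hg\in\Jc(\gg)$. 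Thus $\Jc(\gg)\cap\Gr_k(\gg)$ is closed for every $k$, and therefore $\Jc(\gg)$ is closed in $\Gr(\gg)$.

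The argument is essentially routine, and I do not expect a genuine obstacle; the only point requiring a little care is the translation between convergence in the Grassmannian and norm-convergence of the projections, which underlies facts (a) and (b). An equivalent packaging, which avoids even stating (a)--(b) separately, is to note that for each fixed $X\in\gg$ the set of subspaces $\hg$ with $[X,\hg]\subseteq\hg$ is closed in $\Gr_k(\gg)$ by exactly the above limiting argument, and then to write $\Jc(\gg)\cap\Gr_k(\gg)$ as the finite intersection over a basis of $\gg$ of such closed sets, using $[\gg,\hg]\subseteq\hg\iff[X,\hg]\subseteq\hg$ for all $X$ in a basis.
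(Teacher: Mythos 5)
Your argument is correct and is essentially the same as the paper's proof: both take a convergent sequence of ideals, approximate an element of the limit subspace by elements of the terms, and pass the bracket condition to the limit using continuity of $[\cdot,\cdot]$. You merely make explicit (via projections and the decomposition of $\Gr(\gg)$ into fixed-dimension pieces) the standard facts about Grassmannian convergence that the paper takes for granted.
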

	
	\begin{proof}
		We prove that for every sequence $\{\hg_n\}_{n\in\NN}$  in $\Jc(\gg)$ for which there exists $\hg=\lim\limits_{n\in\NN}\hg_n$ in $\Gr(\gg)$, 
		one has $\hg\in\Jc(\gg)$. 
		To this end let $x\in\gg$ and $y\in\hg$ arbitrary. 
		Then for every $n\in\NN$ there exists $y_n\in\hg_n$ with 
		$y=\lim\limits_{n\in\NN}y_n$, hence 
		$[x,y]=\lim\limits_{n\in\NN}[x,y_n]$. 
		For every $n\in\NN$ one has $\hg_n\in\Jc(\gg)$, hence $[x,y_n]\in\hg_n$, and then $[x,y]\in\lim\limits_{n\in\NN}\hg_n=\hg$. 
		Thus $[\gg,\hg]\subseteq\hg$, and we are done. 
	\end{proof}

	\begin{lemma}\label{9feb2018}
		Let $\gg$ be a nilpotent Lie algebra. 
		Assume that there exist two subsets $A_0,A\subseteq\gg^*$ and an even natural number $d\in\NN$ satisfying the following conditions: 
		\begin{enumerate}[{\rm(a)}]
			\item\label{9feb2018_itema} For every $\xi\in A$ one has $\dim\Oc_\xi=d$. 
			\item\label{9feb2018_itemb} One has $A\subseteq\overline{\bigcup\limits_{\xi\in A_0}\Oc_\xi}$. 
			\item\label{9feb2018_itemc} For every $\xi\in A_0$ one has $\Oc_\xi=\xi+\gg(\xi)^\perp$. 
		\end{enumerate}
		Then the following assertions hold: 
		\begin{enumerate}[{\rm(i)}]
			\item\label{9feb2018_item1} For every $\xi\in A$ one has $\Oc_\xi=\xi+\gg(\xi)^\perp$. 
			\item\label{9feb2018_item2} If $A=\gg^*\setminus[\gg,\gg]^\perp$, then the Lie algebra $\gg$ is 2-step nilpotent. 
		\end{enumerate}
	\end{lemma}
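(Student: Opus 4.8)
Throughout I use the standard identification $T_\xi\Oc_\xi=\gg(\xi)^\perp$ inside $\gg^*$, so that $\dim\Oc_\xi=\dim\gg-\dim\gg(\xi)$ and, for connected $G$, the orbit is flat (that is, $\Oc_\xi=\xi+\gg(\xi)^\perp$) precisely when the affine subspace $\xi+\gg(\xi)^\perp$ is $\Ad^*_G$-invariant, equivalently when $\gg(\xi)^\perp$ is an invariant subspace, equivalently when $\gg(\xi)$ is an ideal of $\gg$. It is convenient to record, via $\gg(\xi)=\{X\in\gg\mid[X,\gg]\subseteq\Ker\xi\}$ and a double-annihilator computation, that \eqref{9feb2018_itemc} holding for every $\xi$ is the same as requiring $[X,\gg]$ to be an ideal of $\gg$ for every $X$; this is the algebraically convenient form of flatness.

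For \eqref{9feb2018_item1}, I fix $\xi\in A$ and, using \eqref{9feb2018_itemb}, choose $\zeta_n\to\xi$ with $\zeta_n\in\Oc_{\eta_n}$, $\eta_n\in A_0$. By \eqref{9feb2018_itemc} each $\Oc_{\zeta_n}=\Oc_{\eta_n}$ is flat, so $\gg(\zeta_n)$ is an ideal; after passing to a subsequence I may assume $\dim\gg(\zeta_n)$ is constant and $\gg(\zeta_n)\to\kg$ in $\Gr(\gg)$, and then $\kg$ is an ideal by Lemma~\ref{8feb2018}. Flatness gives $\Ad^*_G(g)\zeta_n-\zeta_n\in\gg(\zeta_n)^\perp$ for every $g$, and since $\gg(\zeta_n)^\perp\to\kg^\perp$ at constant dimension, passing to the limit yields $\Ad^*_G(g)\xi-\xi\in\kg^\perp$, that is $\Oc_\xi\subseteq\xi+\kg^\perp$; in particular $\gg(\xi)^\perp=T_\xi\Oc_\xi\subseteq\kg^\perp$, so $\kg\subseteq\gg(\xi)$. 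To upgrade to equality I compare dimensions: upper semicontinuity of $\xi\mapsto\dim\gg(\xi)$ gives $\dim\kg\le\dim\gg(\xi)$, while lower semicontinuity of the orbit dimension gives $\dim\Oc_{\zeta_n}\ge\dim\Oc_\xi=d$ by \eqref{9feb2018_itema}; since $d$ is the top orbit dimension (every orbit met by $A$ has dimension $d$) also $\dim\Oc_{\zeta_n}\le d$, whence $\dim\gg(\zeta_n)=\dim\gg-d=\dim\gg(\xi)$. Thus $\kg=\gg(\xi)$, and $\Oc_\xi$, being a closed $d$-dimensional orbit inside the $d$-dimensional affine space $\xi+\gg(\xi)^\perp$, must fill it, so $\Oc_\xi=\xi+\gg(\xi)^\perp$ is flat.

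For \eqref{9feb2018_item2} I first apply \eqref{9feb2018_item1}: with $A=\gg^*\setminus[\gg,\gg]^\perp$ every non-central orbit is flat, and the orbits in $[\gg,\gg]^\perp$ are points, hence trivially flat, so $\gg(\xi)$ is an ideal for every $\xi\in\gg^*$, equivalently $[X,\gg]$ is an ideal for every $X$. I emphasize that this alone does \emph{not} force $\gg$ to be $2$-step (there are $3$-step nilpotent Lie algebras all of whose coadjoint orbits are flat), so the uniform dimension in \eqref{9feb2018_itema} must genuinely be used. Arguing by contradiction, suppose the nilpotency class is $\ge3$. The ideal property descends to $\gg/\zg$, and one checks that equidimensionality descends as well, because the coadjoint orbits through $\zg^\perp=(\gg/\zg)^*$ are unchanged on passing to the quotient; so by induction on $\dim\gg$ the quotient $\gg/\zg$ is $2$-step, which reduces matters to class exactly $3$, where $[\gg,\gg]$ is abelian and $[\gg,[\gg,\gg]]\subseteq\zg$.

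In that case I write $B_\xi(X,Y)=\langle\xi,[X,Y]\rangle$, whose rank equals $d$ for $\xi\notin[\gg,\gg]^\perp$ and $0$ otherwise. Decomposing $B_\xi$ along $\gg\supseteq[\gg,\gg]\supseteq[\gg,[\gg,\gg]]$, a functional $\xi$ vanishing on $[\gg,[\gg,\gg]]$ makes $B_\xi$ arise entirely from the two-step bracket $\gg/[\gg,\gg]\times\gg/[\gg,\gg]\to[\gg,\gg]/[\gg,[\gg,\gg]]$, of rank $d$; but a functional pairing nontrivially with $[\gg,[\gg,\gg]]$ introduces an off-diagonal block coupling $\gg/[\gg,\gg]$ with $[\gg,\gg]/[\gg,[\gg,\gg]]$, and the elementary estimate $\rank B_\xi\ge 2\,\rank(\text{coupling block})$ forces the rank above $d$ unless that coupling vanishes, that is unless $[\gg,[\gg,\gg]]=0$. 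The main obstacle is precisely proving that the coupling cannot survive: this needs the above rank bound to be combined with the Jacobi relations linking the two- and three-step structure constants and with the flatness already established. Once this is settled one obtains $[\gg,[\gg,\gg]]=0$, contradicting class $\ge3$, and hence $\gg$ is $2$-step nilpotent.
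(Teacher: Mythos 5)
Your part~(i) follows the paper's strategy in substance --- both arguments rest on the fact that a coadjoint orbit is flat iff $\gg(\xi)$ is an ideal, and on Lemma~\ref{8feb2018} to pass ideals to the limit in $\Gr(\gg)$ --- but your dimension-matching step is not justified by the stated hypotheses. You claim $\dim\Oc_{\zeta_n}\le d$ ``since $d$ is the top orbit dimension'', yet nothing in (a)--(c) says that $d$ is maximal or that the orbits through $A_0$ have dimension $\le d$: hypothesis (a) only controls orbits through points of $A$, while your $\zeta_n$ lie in orbits through $A_0$. Without $\dim\gg(\zeta_n)=\dim\gg(\xi)$ your limit $\kg$ is merely an ideal properly contained in $\gg(\xi)$, and the final ``a closed $d$-dimensional orbit must fill the $d$-dimensional affine space'' step collapses. (The paper instead invokes the continuity of $\xi\mapsto\gg(\xi)$ from \cite[Lemma 5.3]{BB18} on $A$ and concludes flatness directly from $\gg(\xi)\in\Jc(\gg)$ via \cite[Thm. 3.2.3]{CG90}, with no dimension count.)

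The serious gap is in part~(ii). Your reduction by induction on $\gg/\zg$ to nilpotency class exactly $3$ is sound, and your observation that flatness alone does not suffice is correct ($\ng_{5,3}$ is $3$-step with every coadjoint orbit flat, but its orbits have dimensions $2$ and $4$), so equidimensionality must indeed carry the argument. But the decisive step --- showing that a nonzero coupling block forces $\rank B_\xi>d$ --- is not proved; you yourself write that ``the main obstacle is precisely proving that the coupling cannot survive'' and that this still needs to be combined with the Jacobi relations. The inequality $\rank B_\xi\ge 2\,\rank(\text{coupling block})$ does not by itself push the rank above $d$, since the diagonal block of $B_\xi$ on $\gg/[\gg,\gg]$ can degenerate as the coupling is switched on, and ranks of skew forms do not add across such decompositions. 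So as written the proof of (ii) is incomplete. The paper avoids this linear algebra entirely: because the orbits in $\gg^*\setminus[\gg,\gg]^\perp$ are flat and all of the same (hence maximal) dimension, they are separated points of $\gg^*/G$ by \cite[Thm.~2.2]{ArSoKaSc99}, which gives $\cortex(\gg^*)\subseteq[\gg,\gg]^\perp$; combined with the general fact $(\ad^*_{\gg}x)\xi\in\cortex(\gg^*)$ from \cite[Thm. III-3.3]{Bk95}, this yields $\langle\xi,[x,[y,z]]\rangle=0$ for all $\xi,x,y,z$ in one stroke. If you want to keep your elementary route, the missing rank argument is exactly what you would have to supply.
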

	
	\begin{proof}
		\eqref{9feb2018_item1} 
		It follows by \eqref{9feb2018_itema} along with \cite[Lemma 5.3]{BB18} that the mapping 
		$$\psi\colon A\to\Gr(\gg), \quad \xi\mapsto\gg(\xi),$$ 
		is continuous. 
		Now let us denote $B_0:=\bigcup\limits_{\xi\in A_0}\Oc_\xi\subseteq\gg^*$. 
		By \cite[Thm. 3.2.3]{CG90}, the hypothesis \eqref{9feb2018_itemc} is equivalent to $\psi(A_0)\subseteq\Jc(\gg)$. 
		This is easily seen to be further equivalent to $\psi(B_0)\subseteq\Jc(\gg)$, which implies, by Lemma~\ref{8feb2018}, 
		$\overline{\psi(B_0)}\subseteq\Jc(\gg)$. 
		Therefore, using hypothesis~\eqref{9feb2018_itemb} and the continuity of $\psi$, one obtains 
		$$\psi(A)\subseteq\psi(\overline{B_0})\subseteq \overline{\psi(B_0)}\subseteq\Jc(\gg).$$ 
		The assertion then follows by \cite[Thm. 3.2.3]{CG90} again.

		\eqref{9feb2018_item2}
		Using the notion of cortex $\cortex(\gg^*)$ with respect to the coadjoint action of $G$ defined as in \cite[Ch. III]{Bk95} and \cite{Bk97}, we first prove that 
		\begin{equation}\label{9feb2018_proof_eq1}
		\cortex(\gg^*)\subseteq[\gg,\gg]^\perp. 
		\end{equation}
		Indeed, the set $\gg^*\setminus [\gg, \gg]^\perp$ consists of flat coadjoint orbits of maximal dimension of $G$, hence  by \cite[Thm.~2.2]{ArSoKaSc99}, 
		they are separated points in $\gg^*/G$. Thus \eqref{9feb2018_proof_eq1}
		follows.

		It follows by \cite[Thm. III-3.3]{Bk95} along with \eqref{9feb2018_proof_eq1} that 
		for all $\xi\in\gg^*$ and $x\in\gg$ one has $(\ad^*_{\gg}x)\xi\in\cortex(\gg^*)\subseteq[\gg,\gg]^\perp$. 
		Therefore for all $\xi\in\gg^*$ and $x,y,z\in\gg$ 
		one has $\langle\xi,[x,[y,z]]\rangle=0$, 
		which directly implies that the Lie algebra $\gg$ is 2-step nilpotent. 
	\end{proof}
	
	
	\begin{proposition}\label{MD3}
		If $G$ is a nilpotent Lie group of class~$\Tc$, and $Z$ is the center
		of $G$, then the following assertions hold.
		\begin{enumerate}[{\rm(i)}]
			\item\label{MD3_item1}
			One has $\ind G=1$ if and only if $G$ is a Heisenberg group.
			\item\label{MD3_item2}
			If $\ind G=2$, then either $G$ is 2-step nilpotent, or one has
			$\dim\zg=1$, $\dim [\gg, \gg]=2$, and there exists $\xi\in\gg^*$ with
			$\gg(\xi)=[\gg, \gg]$.
		\end{enumerate}
	\end{proposition}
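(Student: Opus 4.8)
The plan is to handle both parts through a single inequality, namely $\dim[\gg,\gg]\le\ind G$, which I extract by testing the class~$\Tc$ hypothesis against one well-chosen functional; the structure of $\gg$ then follows by reading off the value $\ind G\in\{1,2\}$. Throughout I assume $\gg$ noncommutative, so that $[\gg,\gg]\ne 0$. Since $\gg$ is nilpotent its lower central series strictly decreases, so $[\gg,[\gg,\gg]]\subsetneq[\gg,\gg]$ and therefore $[\gg,[\gg,\gg]]^\perp\supsetneq[\gg,\gg]^\perp$. First I would pick $\xi\in[\gg,[\gg,\gg]]^\perp\setminus[\gg,\gg]^\perp$. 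For any $X\in[\gg,\gg]$ one has $[X,\gg]\subseteq[\gg,[\gg,\gg]]\subseteq\ker\xi$, hence $X\in\gg(\xi)$; thus $[\gg,\gg]\subseteq\gg(\xi)$. As $\xi\notin[\gg,\gg]^\perp$, the definition of class~$\Tc$ forces $\dim\gg(\xi)=\ind G$, giving the bound
\[
\dim[\gg,\gg]\le\ind G.
\]

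For part~(i), the implication from Heisenberg to $\ind G=1$ is the routine computation that $\hg_{2d+1}$ has $1$-dimensional center equal to its commutator and generic isotropy equal to that center, so that $\ind G=\min_\eta\dim\gg(\eta)=1$ by Lemma~\ref{27december2017-1555}. For the converse I would feed $\ind G=1$ into the bound to get $\dim[\gg,\gg]=1$, say $[\gg,\gg]=\RR W$; since each $\ad X$ is nilpotent, the line $\RR W=[\gg,[\gg,\gg]]\subseteq[\gg,\gg]$ carries no nonzero eigenvalue, forcing $[\gg,W]=0$, i.e.\ $[\gg,[\gg,\gg]]=0$, so $\gg$ is $2$-step nilpotent and $[\gg,\gg]\subseteq\zg$. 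Combined with $\zg\subseteq\gg(\xi)=[\gg,\gg]$ (all of dimension~$1$), this gives $\zg=[\gg,\gg]$, and the nondegenerate alternating form induced on $\gg/\zg$ identifies $\gg$ with $\hg_{2d+1}$.

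For part~(ii) I would feed $\ind G=2$ into the bound to get $\dim[\gg,\gg]\in\{1,2\}$ and then split into cases. If $\dim[\gg,\gg]=1$, the same nilpotency argument yields $[\gg,[\gg,\gg]]=0$, so $\gg$ is $2$-step. If $\dim[\gg,\gg]=2$, then $[\gg,\gg]\subseteq\gg(\xi)$ together with $\dim\gg(\xi)=2$ forces $\gg(\xi)=[\gg,\gg]$; in particular $\zg\subseteq[\gg,\gg]$, so $\dim\zg\le 2$. A final split on $\dim\zg$ finishes: if $\dim\zg=2$ then $\zg=[\gg,\gg]$ is central and $\gg$ is $2$-step, whereas if $\dim\zg=1$ I have produced precisely the data of the second alternative, namely $\dim\zg=1$, $\dim[\gg,\gg]=2$, and a functional $\xi$ with $\gg(\xi)=[\gg,\gg]$.

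The conceptual core, the displayed bound, is immediate once one thinks of probing class~$\Tc$ with functionals annihilating $[\gg,[\gg,\gg]]$, so I expect the delicate points to lie elsewhere. The first is the nilpotency step promoting a one-dimensional $[\gg,\gg]$ to $[\gg,[\gg,\gg]]=0$, where I must use that $\ad X$ has no nonzero eigenvalue. The second is the classical recognition of a $2$-step nilpotent algebra whose center equals its $1$-dimensional commutator as a Heisenberg algebra, via nondegeneracy of the induced symplectic form. I would also keep an eye on the degenerate boundary: the commutative case must be set aside separately, as it falls under the $2$-step alternative in part~(ii) and, in part~(i), corresponds to $\RR$ under the convention $H_1=\RR$. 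Each ``$2$-step'' conclusion above could instead be derived from Lemma~\ref{9feb2018}, but the route through $[\gg,[\gg,\gg]]^\perp$ seems the most economical.
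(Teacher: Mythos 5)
Your proof is correct. The inequality $\dim[\gg,\gg]\le\ind G$ that you extract from a functional $\xi\in[\gg,[\gg,\gg]]^\perp\setminus[\gg,\gg]^\perp$ is exactly the content of the paper's Lemma~\ref{MD1} combined with Lemma~\ref{MD2}, and your part~(ii) is essentially the paper's argument reorganized as a direct case split on $\dim[\gg,\gg]$ and $\dim\zg$ rather than a proof by contradiction; if anything your version is tidier, since you get $\zg\subseteq[\gg,\gg]$ directly from $\zg\subseteq\gg(\xi)=[\gg,\gg]$ where the paper asserts it with the justification left implicit. Part~(i) is where you genuinely diverge: the paper shows that $\ind G=1$ forces $2$-step nilpotency by proving that the generic orbits $\Oc_{t\xi_0}=t\xi_0+\zg^\perp$ are flat and dense in $\gg^*$ and then invoking Lemma~\ref{9feb2018}\eqref{9feb2018_item2}, which rests on the cortex and the separatedness of maximal-dimensional flat orbits; you instead note that $\dim[\gg,\gg]\le\ind G=1$ together with the strict decrease of the lower central series gives $[\gg,[\gg,\gg]]=0$ at once, and then recognize the Heisenberg algebra from the nondegenerate alternating form on $\gg/\zg$. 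Your route is purely algebraic and shorter; the paper's route reuses a topological lemma that it needs anyway and that applies to commutators of any dimension. Two small points: the phrase ``the line $\RR W=[\gg,[\gg,\gg]]$'' is garbled and should read $[\gg,[\gg,\gg]]=[\gg,\RR W]\subseteq\RR W$ --- though the strict-decrease observation you already made renders the eigenvalue argument unnecessary --- and your caveat about the abelian boundary of~(i) is apt, since the paper's own proof also tacitly relies on the convention $H_1=\RR$ there.
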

	
	\begin{proof}
		\eqref{MD3_item1}
		By Lemma~\ref{27december2017-1555} one obtains $\dim\zg=1$ and there
		exists $\xi_0\in\gg^*$ with $\gg_2(\xi_0)=\zg$ and for which
		$\Gamma:=\{\Oc_{t\xi_0}\mid t\in\RR^\times\}$ is an open dense subset
		of $\gg^*/G$.
		Then, defining $q\colon\gg^*\to \gg^*/G$, $\xi\mapsto\Oc_\xi$,
		it follows by Lemma~\ref{quot} that $q^{-1}(\Gamma)$ is dense in
		$\gg^*$, that is, $\bigcup\limits_{t\in\RR^\times}\Oc_{t\xi_0}$ is
		dense in~$\gg^*$.
		Moreover, since $\gg(t\xi_0)=\zg$ for all $t\in\RR^\times$,
		it follows by \cite[Thm. 3.2.3]{CG90} that $\Oc_{t\xi_0}=t\xi_0+\zg^\perp$.
		Hence we may use Lemma~\ref{9feb2018}\eqref{9feb2018_item2} with
		$A_0=\{t\xi_0\mid t\in\RR^\times\}$ to obtain that the Lie algebra
		$\gg$ is 2-step nilpotent.
		Since we have noted above that the center of $\gg$ is 1-dimensional,
		it then follows that $\gg_2$ is a Heisenberg algebra.
		\\
		\eqref{MD3_item2}
		Let us assume that $\ind\gg=2$ and $\gg$ is not 2-step nilpotent, that
		is, $[\gg, \gg]\not\subseteq\zg$.
		In particular $[\gg, \gg]\ne\{0\}$, and then there exists
		$\xi\in\gg^*\setminus([\gg, \gg]^\perp)$ with $[\gg, \gg]\subseteq\gg(\xi)$, by
		Lemma~\ref{MD1}.
		Since $\xi\not\in[\gg, \gg]^\perp$ and $\gg$ is of class~$\Tc$,
		one has $\dim\gg(\xi)=\ind\gg=2$.
		One also has $\zg\subseteq\gg(\xi)$ and $[\gg, \gg]\not\subseteq\zg$,
		hence necessarily $\dim\zg=1$.
		Then $\zg\subseteq[\gg, \gg]$.
		If $\zg=[\gg, \gg]$, then $\gg$ is a Heisenberg algebra, which is a
		contradiction with the assumption that $\gg$ is not 2-step nilpotent.
		Consequently $\zg\subsetneqq[\gg, \gg] \subseteq\gg(\xi)$.
		Since $\dim\gg(\xi)=2$, the assertion follows.
	\end{proof}

	\begin{example}\label{Ex5.5}
		\normalfont
		There are many examples of 2-step nilpotent Lie algebras satisfying the hypotheses of Lemma~\ref{9feb2018}\eqref{9feb2018_item2}, 
		completely different from the Heisenberg algebras. 
		Here is a list of examples that illustrate this assertion.
		\begin{enumerate}
			\item\label{Ex5.5_item1} For any finite-dimensional real vector space $\Vc$ and any  $D\in\End(\Vc)$ with $D^2=0$, $D\ne 0$,  define the Lie algebra  $\gg=\gg_D:=\Vc\rtimes_{\alpha_D}\RR$, using notation from \cite[Sect. 2]{BB18}. 
			That is, $\gg_D=\Vc\dotplus\RR$ as a vector space, and the Lie bracket of~$\gg_D$ is given by $[(v_1,t_1),(v_2,t_2)]=(t_1Dv_2-t_2Dv_1,0)$ for all $v_1,v_2\in\Vc$ and $t_1,t_2\in\RR$. 
			
			Then the Lie algebra $\gg$ is 2-step nilpotent since $D^2=0$, and $\dim\Oc_\xi=2$ for all $\xi\in\gg^*\setminus[\gg,\gg]^\perp$. 
			(See for instance \cite[Thm. (ii)]{ACL95}.) 
			This includes for instance 
			the Lie algebra defined by a basis $X_1,X_2,X_3,X_4,X_5$ satisfying  
			$[X_5,X_4]=X_2$, $[X_5,X_3]=X_1$, 
			studied in \cite[N5N2]{Pe88}. 
			
			\item\label{Ex5.5_item2} Let $\gg$ be the so-called free 2-step nilpotent Lie algebra with 3 generators, defined by a basis $X_1,X_2,X_3,X_4,X_5,X_6$ satisfying  
			$[X_6,X_5]=X_3$, $[X_6,X_4]=X_1$, $[X_5,X_4]=X_2$.   
			This satisfies $\dim\Oc_\xi=2$ for all $\xi\in\gg^*\setminus[\gg,\gg]^\perp$, and was studied in \cite[Thm. (iv)]{ACL95} and  \cite[N6N15]{Pe88}. 
			
			\item\label{Ex5.5_item3} Let $\gg$ be the 2-step nilpotent Lie algebra with a basis $X_1,X_2,X_3,X_4,X_5,X_6$ satisfying  
			$[X_6,X_5]=X_2$, $[X_6,X_3]=X_1$, $[X_5,X_4]=X_1$, $[X_4,X_3]=X_2$.   
			Then one has $\dim\Oc_\xi=4$ for all $\xi\in\gg^*\setminus[\gg,\gg]^\perp$. 
			(See \cite[N6N17]{Pe88}.)
		\end{enumerate}
		
	\end{example}
	
	\begin{example}
		\normalfont
		Here we give some counterexamples related to Lemma~\ref{9feb2018}\eqref{9feb2018_item2}. 
		\begin{enumerate} 
			\item If $\gg$ is a 2-step nilpotent Lie algebra, then 
			there may exist no number $d\in\NN$ with $\dim\Oc_\xi=d$ for all $\xi\in\gg^*\setminus[\gg,\gg]^\perp$. 
			For instance, 
			the 8-dimensional Lie algebra $\gg=\hg_3\times\hg_5$ (where $\hg_{2k+1}$ denotes the $(2k+1)$-dimensional Heisenberg algebra for any $k\in\NN$) is 2-step nilpotent and yet it has both 2-dimensional and 4-dimensional coadjoint orbits.
			See also Lemmas~\ref{MD-indecomp} and \ref{MD-red}. 
			\item If $\gg$ be a nilpotent Lie algebra for which there exists $d\in\NN$ with $\dim\Oc_\xi=d$ for all $\xi\in\gg^*\setminus[\gg,\gg]^\perp$,  
			then it does not necessarily follow that $\gg$ is 2-step nilpotent. 
			This can be proved by several examples: 
			\begin{itemize}
				\item If $\Vc$ is a finite-dimensional real vector space, 
				$D\in\End(\Vc)$ is a nilpotent operator with $D^2\ne0$, 
				and we define  
				 $\gg=\gg_D:=\Vc\rtimes_{\alpha_D}\RR$ 
				as in Example~\ref{Ex5.5}\eqref{Ex5.5_item1} above, 
				then $\gg$ is not 2-step nilpotent and yet  $\dim\Oc_\xi=2$ for all $\xi\in\gg^*\setminus[\gg,\gg]^\perp$. 
				(See for instance \cite[Thm. (ii)]{ACL95}, which includes \cite[N6N18]{Pe88}.)
				\item If $\gg$ is the Lie algebra defined by a basis $X_1,X_2,X_3,X_4,X_5$ satisfying  
				$[X_5,X_4]=X_3$, $[X_5,X_3]=X_2$, $[X_4,X_3]=X_1$, 
				then $\gg$ is 3-step nilpotent and yet  $\dim\Oc_\xi=2$ for all $\xi\in\gg^*\setminus[\gg,\gg]^\perp$.  (See \cite[Thm. (v)]{ACL95} or \cite[N5N4]{Pe88}.)
			\end{itemize} 
		\end{enumerate}
	\end{example}

	\begin{lemma}\label{MD1}
		Let $\gg$ be a Lie algebra with $[\gg, \gg]\ne\{0\}$. Then there exists $\xi\in\gg^*\setminus([\gg, \gg]^\perp)$ with $[\gg, \gg]\subseteq\gg(\xi)$, 
		and $\gg(\xi)$ is an ideal of $\gg$. 
	\end{lemma}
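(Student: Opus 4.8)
The plan is to translate both requirements on $\xi$ into conditions on the lower central series and then produce $\xi$ by elementary linear algebra. First I would observe that the condition $[\gg,\gg]\subseteq\gg(\xi)$ is equivalent to $\xi$ vanishing on $[\gg,[\gg,\gg]]$: indeed, saying that every $Z\in[\gg,\gg]$ lies in $\gg(\xi)$ means precisely $\langle\xi,[[\gg,\gg],\gg]\rangle=0$. On the other hand $\xi\notin[\gg,\gg]^\perp$ just says $\xi|_{[\gg,\gg]}\neq 0$. Hence the whole problem reduces to producing a functional that vanishes on $[\gg,[\gg,\gg]]$ but not on $[\gg,\gg]$, and then checking that this particular $\xi$ has $\gg(\xi)$ an ideal.

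Such a functional exists precisely when $[\gg,[\gg,\gg]]\subsetneq[\gg,\gg]$, and this is the point where the nilpotency of $\gg$ (which is in force throughout the paper, the Lie algebras being those of nilpotent Lie groups) enters. I would recall that for a nilpotent Lie algebra the terms of the lower central series strictly decrease until they reach $\{0\}$: if one had $[\gg,[\gg,\gg]]=[\gg,\gg]$, then all higher terms would coincide with $[\gg,\gg]$, forcing $[\gg,\gg]=\{0\}$, contrary to the hypothesis $[\gg,\gg]\neq\{0\}$. Thus $[\gg,[\gg,\gg]]\subsetneq[\gg,\gg]$, the quotient $[\gg,\gg]/[\gg,[\gg,\gg]]$ is nonzero, and I may choose $\xi\in\gg^*$ vanishing on $[\gg,[\gg,\gg]]$ and nonzero on that quotient. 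By the reformulation above, this $\xi$ already satisfies $\xi\notin[\gg,\gg]^\perp$ and $[\gg,\gg]\subseteq\gg(\xi)$.

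The remaining and slightly more delicate step is to verify that $\gg(\xi)$ is an ideal for this specific $\xi$, i.e. $[\gg,\gg(\xi)]\subseteq\gg(\xi)$. Given $X\in\gg(\xi)$ and $Y\in\gg$, I must show $\langle\xi,[[Y,X],W]\rangle=0$ for all $W\in\gg$. The device is the derivation form of the Jacobi identity, $[[Y,X],W]=[Y,[X,W]]-[X,[Y,W]]$. The second term is killed by $\xi$ because $X\in\gg(\xi)$ and $[Y,W]\in\gg$, so $\langle\xi,[X,[Y,W]]\rangle=0$ directly from the definition of $\gg(\xi)$. For the first term, $[X,W]\in[\gg,\gg]$, whence $[Y,[X,W]]\in[\gg,[\gg,\gg]]$, on which $\xi$ vanishes by construction. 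Adding the two contributions gives $\langle\xi,[[Y,X],W]\rangle=0$, so $[Y,X]\in\gg(\xi)$, proving that $\gg(\xi)$ is an ideal. I expect this Jacobi computation to be the main obstacle, as it is the only place where the specific choice of $\xi$ (its vanishing on $[\gg,[\gg,\gg]]$) is genuinely needed beyond mere existence; everything else is linear algebra together with the standard structure of the lower central series.
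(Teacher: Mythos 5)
Your proof is correct and, for the existence of $\xi$, is exactly the paper's argument: nilpotency gives $[\gg,[\gg,\gg]]\subsetneqq[\gg,\gg]$, and one picks $\xi$ killing the smaller space but not the larger one. The one place you diverge is the last step, and there you do more work than necessary: you predict the Jacobi computation will be ``the main obstacle,'' but in fact no computation is needed. Once $[\gg,\gg]\subseteq\gg(\xi)$ is established, $\gg(\xi)$ is an ideal for the trivial reason that \emph{any} subspace containing the derived algebra is an ideal ($[\gg,\gg(\xi)]\subseteq[\gg,\gg]\subseteq\gg(\xi)$); this is all the paper says, and it uses nothing about the particular $\xi$ beyond the containment already proved. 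Your Jacobi argument is valid, but it obscures the fact that the ideal property is automatic rather than a consequence of the special choice of $\xi$.
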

	
	\begin{proof}
		Denote $\gg^1=[\gg, \gg]$.
		Since $\gg$ is nilpotent, one has $[\gg,\gg^1]\subsetneqq\gg^1$. 
		Then there exists $\xi\in\gg^*$ with $[\gg,\gg^1]\subseteq\Ker\xi$ and $\gg^1\not\subseteq\Ker\xi$. 
		That is, $\gg^1\subseteq\gg(\xi)$ and $\xi\not\in(\gg^1)^\perp$. 
		
		Since $\gg^1\subseteq\gg(\xi)$, it follows that $\gg(\xi)$ is an ideal of  $\gg$, 
		and this completes the proof. 
	\end{proof}

	\begin{lemma}\label{MD2}
		If the nilpotent Lie algebra $\gg$ is of class $\Tc$ then 
		$$\dim\gg\le\ind\gg+\dim [\gg, \gg]^\perp.$$
	\end{lemma}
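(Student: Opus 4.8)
The plan is to reduce the stated inequality to the simpler claim that $\dim[\gg,\gg]\le\ind\gg$, and then to prove this reduced form by exhibiting a single well-chosen linear functional.

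First I would record the elementary dimension count: since $[\gg,\gg]^\perp$ is by definition the annihilator of $[\gg,\gg]$ inside $\gg^*$, one has $\dim[\gg,\gg]^\perp=\dim\gg-\dim[\gg,\gg]$. Substituting this into the target inequality $\dim\gg\le\ind\gg+\dim[\gg,\gg]^\perp$ shows, after cancelling $\dim\gg$ from both sides, that it is equivalent to $\dim[\gg,\gg]\le\ind\gg$. So the whole problem collapses to bounding the dimension of the derived algebra by the index.

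To prove this reduced inequality I would argue as follows. If $\gg$ is abelian, then $[\gg,\gg]=\{0\}$ and the inequality is trivial (and the class~$\Tc$ hypothesis is vacuous). So assume $[\gg,\gg]\ne\{0\}$, and apply Lemma~\ref{MD1} to obtain some $\xi\in\gg^*\setminus[\gg,\gg]^\perp$ with $[\gg,\gg]\subseteq\gg(\xi)$. Because $\xi\notin[\gg,\gg]^\perp$ and $\gg$ is of class~$\Tc$, the defining property in Definition~\ref{def_Tc} forces $\dim\gg(\xi)=\ind\gg$. Combining this with the inclusion $[\gg,\gg]\subseteq\gg(\xi)$ gives $\dim[\gg,\gg]\le\dim\gg(\xi)=\ind\gg$, which is precisely what was needed.

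The one point requiring attention is that Lemma~\ref{MD1} must deliver a functional lying \emph{strictly outside} $[\gg,\gg]^\perp$; this is exactly what the lemma guarantees, and it is indispensable, since the class~$\Tc$ hypothesis only constrains $\dim\gg(\xi)$ for $\xi\in\gg^*\setminus[\gg,\gg]^\perp$. Beyond this bookkeeping there is no real obstacle: the substantive content is carried entirely by Lemma~\ref{MD1} (which produces a functional whose coadjoint stabiliser engulfs the whole derived algebra) and by the uniform bound on stabiliser dimensions built into the very definition of class~$\Tc$.
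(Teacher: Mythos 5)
Your proof is correct and follows essentially the same route as the paper: reduce the inequality to $\dim[\gg,\gg]\le\ind\gg$ via the identity $\dim[\gg,\gg]^\perp=\dim\gg-\dim[\gg,\gg]$, then invoke Lemma~\ref{MD1} to produce $\xi\in\gg^*\setminus[\gg,\gg]^\perp$ with $[\gg,\gg]\subseteq\gg(\xi)$ and use the class~$\Tc$ hypothesis to conclude $\dim\gg(\xi)=\ind\gg$. No issues.
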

	
	\begin{proof}
		One has $\dim[\gg, \gg]^\perp=\dim\gg-\dim[\gg, \gg]$.
		Hence the assertion is equivalent to 
		\begin{equation}\label{MD2_proof_eq1}
		\dim[\gg, \gg]\le\ind\gg.
		\end{equation}
		To prove this inequality, we may assume $[\gg, \gg]\ne\{0\}$. 
		Then, by Lemma~\ref{MD1}, there exists 
		$\xi\in\gg^*\setminus([\gg, \gg]^\perp)$ with $[\gg, \gg]\subseteq\gg(\xi)$, 
		hence $\dim[\gg, \gg]\le\dim\gg(\xi)$. 
		Since $\xi\not\in[\gg, \gg]^\perp$ and $\gg$ is of class $\Tc$, 
		one has $\dim\gg(\xi)=\ind\gg$, 
		and thus \eqref{MD2_proof_eq1} follows.   
	\end{proof}
	
	\begin{proposition}\label{P4}
		Let $G_1$ and $G_2$ be nilpotent Lie groups such that  $C^*(G_1)$ and $C^*(G_2)$ are Morita equivalent. 
		Then $G_1$ is of class $\Tc$ if and only if $G_2$ is of class $\Tc$, and if this is the case then 
		for every Rieffel homeomorphism 
		$\psi\colon\widehat{G}_1\to\widehat{G}_2$  
		one has $\psi([\gg_1,\gg_1]^\perp)=[\gg_2,\gg_2]^\perp$ 
		and $\dim[\gg_1,\gg_1]^\perp=\dim[\gg_2,\gg_2]^\perp$. 
		
		In particular, if $G$ is a nilpotent Lie group of class~$\Tc$, then $G$ is type~$\RRa$. 
		\end{proposition}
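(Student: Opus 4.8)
The plan is to reduce almost the entire statement to the single fact that a nilpotent Lie group of class~$\Tc$ is type~$\RRa_1$, and then to promote this to the \emph{invariance} of the property of being class~$\Tc$ by means of a Hausdorffness criterion on the complement of the set of characters.

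First I would prove that if $G$ is of class~$\Tc$ then $[\gg,\gg]^\perp$ is the unique maximal element of $Q(\widehat G)$. Writing $d:=\dim\gg-\ind\gg$ for the common dimension of the nontrivial coadjoint orbits, one has $D:=\widehat G\setminus[\gg,\gg]^\perp=(\gg^*/G)_d$. This set is open (its complement is closed) and dense by Lemma~\ref{quot}(i) (the commutative case being trivial, as then $\widehat G=[\gg,\gg]^\perp$); its relative topology is Hausdorff by \cite[Lemma 6.8(1)]{BBL17} and locally connected, since as the top stratum $\Gamma_1$ of the $\RR$-space $\widehat G$ it is isomorphic to a semi-algebraic Zariski open cone. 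To apply Lemma~\ref{L5a} it then remains to check hypothesis~(b): every net in $D$ with a limit point $\Oc_\eta\in\widehat G\setminus D=[\gg,\gg]^\perp$ has at least two limit points. This is precisely the computation in the proof of Lemma~\ref{30december2017}: after passing to a subsequence and to a convergent family of polarizations $\pg_j\to\mg$, the limit-point set contains $q(\eta+\mg^\perp)$, which is connected with more than one point because $\dim\Oc_\eta=0$. Lemma~\ref{L5a} then forces every maximal $S\in Q(\widehat G)$ to satisfy $S\subseteq[\gg,\gg]^\perp$, and since $[\gg,\gg]^\perp$ is itself maximal by Proposition~\ref{maximal}, it is the unique maximal element; that is, $G$ is type~$\RRa_1$.

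With this in hand the easy assertions are immediate. A Morita equivalence yields a homeomorphism $\psi\colon\widehat G_1\to\widehat G_2$; if $G_1$ is of class~$\Tc$ then it is type~$\RRa_1$, so Lemma~\ref{L2} gives $\psi([\gg_1,\gg_1]^\perp)=[\gg_2,\gg_2]^\perp$ and $\dim[\gg_1,\gg_1]^\perp=\dim[\gg_2,\gg_2]^\perp$, while the final ``in particular'' clause is just the implication type~$\RRa_1\Rightarrow{}$type~$\RRa$ already contained in Lemma~\ref{L2}. For the symmetry of the class~$\Tc$ property I would establish the topological criterion that a nilpotent Lie group $H$ is of class~$\Tc$ if and only if $\widehat H\setminus[\hg,\hg]^\perp$ is Hausdorff, the forward direction being again \cite[Lemma 6.8(1)]{BBL17}. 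Granting the criterion, the argument closes as follows: type~$\RRa_1$ of $G_1$ lets $\psi$ restrict to a homeomorphism $\widehat G_1\setminus[\gg_1,\gg_1]^\perp\to\widehat G_2\setminus[\gg_2,\gg_2]^\perp$; if $G_1$ is of class~$\Tc$ the source is Hausdorff, hence so is the target, and the criterion forces $G_2$ to be of class~$\Tc$. The reverse implication is symmetric, once one notes that $\psi$ carries the unique maximal element of $Q(\widehat G_1)$ to the unique maximal element of $Q(\widehat G_2)$, which must be $[\gg_2,\gg_2]^\perp$ by Proposition~\ref{maximal}, so that $G_2$ is type~$\RRa_1$ as well.

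The hard part is the nontrivial direction of the criterion: if $H$ is not of class~$\Tc$, then $\widehat H\setminus[\hg,\hg]^\perp$ is not Hausdorff. The plan is to let $d'$ be the \emph{second largest} orbit dimension, so $0<d'<d_{\max}$, and to choose $\eta$ with $\dim\Oc_\eta=d'$; since the maximal-dimension locus is open and dense, there is a sequence $\xi_j\to\eta$ with $\dim\Oc_{\xi_j}=d_{\max}$. Running the polarization-limit computation of Lemma~\ref{30december2017} (through \cite{CGG87} and \cite{Fe64}) produces a limiting subalgebra $\mg$ with $\dim\mg^\perp=d_{\max}/2$ whose associated set $q(\eta+\mg^\perp)$ lies in the limit-point set of $\{\Oc_{\xi_j}\}$; as $\mg$ is too small to be a polarization at $\eta$, this set has more than one point. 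The delicate point, which I expect to be the main obstacle, is to guarantee that at least two of these limit points are \emph{noncentral}, rather than characters: this is exactly why $d'$ is taken to be the second largest orbit dimension, for then lower semicontinuity of $\xi\mapsto\dim\Oc_\xi$ shows that the orbits meeting a neighborhood of $\eta$ in $\eta+\mg^\perp$ all have dimension $\ge d'>0$, hence lie in $\widehat H\setminus[\hg,\hg]^\perp$. This produces two distinct noncentral limit points of a net in $\widehat H\setminus[\hg,\hg]^\perp$, contradicting its Hausdorffness and completing the proof.
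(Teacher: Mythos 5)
Your argument hinges on the claimed equivalence ``$H$ is of class~$\Tc$ if and only if $\widehat H\setminus[\hg,\hg]^\perp$ is Hausdorff'', and the forward direction of that equivalence is false. The $4$-dimensional filiform group $N_4=F_4$ is of class~$\Tc$ (all its nontrivial coadjoint orbits are $2$-dimensional), yet the relative topology of $\widehat{N_4}\setminus[\ng_4,\ng_4]^\perp$ is \emph{not} Hausdorff --- this is exactly the fact invoked (from Dixmier) in the proofs of Proposition~\ref{P7} and Theorem~\ref{N6-15}. Citing \cite[Lemma 6.8(1)]{BBL17} does not rescue this: that lemma concerns \emph{flat} orbits of a fixed dimension (it is flatness, via \cite[Thm.~2.2]{ArSoKaSc99}, that yields separation), and class~$\Tc$ does not imply flatness ($\ng_4$ and $\ng_{5,4}$ are class~$\Tc$ with non-flat orbits). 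The same error undermines your first step: hypothesis~(a) of Lemma~\ref{L5a} (Hausdorffness of $D$) fails for $D=\widehat{G}\setminus[\gg,\gg]^\perp$ when $G=F_4$ or $F_5$, so you have not shown that class~$\Tc$ implies type~$\RRa_1$; note that Lemma~\ref{L5} deliberately excludes $F_5$ (which is class~$\Tc$), precisely because type~$\RRa_1$ is not established for it, and for $F_4$ the type~$\RRa_1$ property is obtained only by the finer three-step stratification of Lemma~\ref{L5b}, not by one application of Lemma~\ref{L5a}. Relatedly, $\widehat G\setminus[\gg,\gg]^\perp$ is in general not the single stratum $\Gamma_1$ of the $\RR$-space structure (constancy of orbit dimension does not collapse the stratification), so local connectedness does not follow as you claim either.

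The paper's proof takes a different route that avoids any Hausdorffness statement about the complement of the characters: for a class-$\Tc$ group, $D_1=\widehat{G}_1\setminus[\gg_1,\gg_1]^\perp$ is the set of orbits of maximal dimension, which by \cite[Cor.~2.9]{ArSoSp97} is exactly the set of points of $\widehat{C^*(G_1)}$ of finite upper multiplicity; finite upper multiplicity is preserved by Rieffel homeomorphisms (\cite[Cor.~5.4]{ArSoSp97}, \cite[Cor.~13(2)]{aHRaWi07}), so $\psi(D_1)$ is the set of maximal-dimensional orbits of $G_2$, whence $[\gg_2,\gg_2]^\perp\subseteq\widehat{G_2}\setminus\psi(D_1)=\psi([\gg_1,\gg_1]^\perp)\in Q(\widehat{G_2})$, and maximality of $[\gg_2,\gg_2]^\perp$ (Proposition~\ref{maximal}) forces equality; invariance of domain then gives the dimension statement. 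To salvage your approach you would need a Morita-invariant characterization of $D_1$; upper multiplicity, not Hausdorffness, is the one that works.
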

	
	\begin{proof}
		Assume that $G_1$ is of class $\Tc$, and let $\psi\colon\widehat{G}_1\to\widehat{G}_2$ be a Rieffel homeomorphism, which exists by \cite[Cor. 3.33]{RaWi98}. 
		
		Since $G_1$ is of class $\Tc$, the open subset
		$$D_1:=\widehat{G}_1\setminus[\gg_1,\gg_1]^\perp$$ 
		of its unitary dual corresponds (via Kirillov's correspondence) to the coadjoint orbits of $G_1$ having maximal dimension. 
		It then follows by \cite[Cor. 2.9]{ArSoSp97} that $D_1$ is exactly the set of all points in $\widehat{G}_1=\widehat{C^*(G_1)}$ that have finite upper multiplicities. 
		By \cite[Cor. 5.4]{ArSoSp97} or \cite[Cor. 13(2)]{aHRaWi07}, the set $\psi(D_1)$ 
		consists of the points of $\widehat{C^*(G_2)}$ that have finite upper multiplicities. 
		Then, by \cite[Cor. 2.9]{ArSoSp97} again, $\psi(D_1)$ corresponds to the set of coadjoint orbits of $G_2$ having maximal dimension.
		
		In particular, one has 
		\begin{equation}\label{MD4bis_proof_eq1}
		[\gg_2,\gg_2]^\perp\subseteq\widehat{G_2}\setminus\psi(D_1).
		\end{equation}
		Since $\psi$ is a bijection, one has $\widehat{G_2}\setminus\psi(D_1)	
		=\psi(\widehat{G_1}\setminus D_1)
		=\psi([\gg_1,\gg_1]^\perp)$. 
		Then, using $[\gg_1,\gg_1]^\perp\in Q(G_1)$, one obtains 
		$\widehat{G_2}\setminus\psi(D_1)\in Q(G_2)$. 
		On the other hand, $[\gg_2,\gg_2]^\perp$ is a maximal element of $Q(G_2)$ by Proposition~\ref{maximal} hence, by~\eqref{MD4bis_proof_eq1}, one obtains 
		$$[\gg_2,\gg_2]^\perp=\widehat{G_2}\setminus\psi(D_1).$$ 
		We have proved above that  $\psi(D_1)$ corresponds to the set of coadjoint orbits of $G_2$ having maximal dimension, 
		hence it follows that $G_2$ is class~$\Tc$ and 
		$\psi([\gg_1,\gg_1]^\perp)=[\gg_2,\gg_2]^\perp$. 
		This equality also implies $\dim[\gg_1,\gg_1]^\perp=\dim[\gg_2,\gg_2]^\perp$ 
		by Brouwer's theorem on invariance of domain, and we are done. 
	\end{proof}
	
	

	\subsection{A weaker version of Theorem~\ref{5D}}
	
	In this subsection we prove, among other things, that two nilpotent Lie groups of dimension $\le 5$ are isomorphic if and only if their 
	unitary dual spaces are homeomorphic
	(Proposition~\ref{P7}).  
	This will help us to establish in Proposition~\ref{6D-MD} 
	a partial version of Theorem~\ref{5D}.
	
	We first recall the classification of nilpotent Lie algebras of dimension $\le 5$ over~$\RR$,  
	and to this end we need to introduce some notation. 
	Unless otherwise mentioned, $X_1,\dots,X_n$ is a basis of a nilpotent Lie algebra of dimension $n\le 5$, and we give only the brackets $[X_j,X_k]$ that are different from zero. 
	
	We consider the following nilpotent Lie algebras:
	\begin{enumerate}[(a)]
		\item {\it Case $n=3$}: 
		\begin{itemize}
			\item $\ng_3$: $[X_3,X_2]=X_1$
		\end{itemize}
		\item {\it Case $n=4$}: 
		\begin{itemize}
			\item $\ng_4$: $[X_4,X_3]=X_2$, $[X_4,X_2]=X_1$
		\end{itemize}
		\item {\it Case $n=5$}: 
		\begin{itemize}
			\item $\ng_{5,1}$: $[X_5,X_4]=X_1$, $[X_3,X_2]=X_1$
			\item $\ng_{5,2}$: $[X_5,X_4]=X_2$, $[X_5,X_3]=X_1$
			\item $\ng_{5,3}$: $[X_5,X_4]=X_2$, $[X_5,X_2]=X_1$, $[X_4,X_3]=X_1$
			\item $\ng_{5,4}$: $[X_5,X_4]=X_3$, $[X_5,X_3]=X_2$, $[X_4,X_3]=X_1$
			\item $\ng_{5,5}$: $[X_5,X_4]=X_3$, $[X_5,X_3]=X_2$, $[X_5,X_2]=X_1$
			\item $\ng_{5,6}$: $[X_5,X_4]=X_3$, $[X_5,X_3]=X_2$, $[X_5,X_2]=X_1$, $[X_4,X_3]=X_1$
		\end{itemize}
	\end{enumerate}
	
With the above notation we can state the classification of nilpotent Lie algebras of dimension $\le 5$.
	\begin{proposition}\label{5D_classif}
		Every nilpotent Lie algebra of dimension $\le 5$ over $\RR$ is isomorphic to exactly one of the following Lie algebras: 
		\begin{itemize}
			\item Dimension 1: $\ag_1$
			\item Dimension 2: $\ag_2$  
			\item Dimension 3: $\ag_3$, $\ng_3$
			\item Dimension 4: $\ag_4$, $\ag_1\times\ng_3$, $\ng_4$
			\item Dimension 5: $\ag_5$, $\ag_2\times\ng_3$, $\ag_1\times\ng_4$, $\ng_{5,1}$, $\ng_{5,2}$, $\ng_{5,3}$, $\ng_{5,4}$, $\ng_{5,5}$, $\ng_{5,6}$
		\end{itemize}
	\end{proposition}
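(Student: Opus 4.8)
The plan is to prove the two halves of the statement separately: completeness (every nilpotent Lie algebra of dimension $\le 5$ is isomorphic to one on the list) and non-redundancy (the listed algebras are pairwise non-isomorphic). First I would reduce to indecomposable algebras. If $\zg\not\subseteq[\gg,\gg]$, then any line $\RR z\subseteq\zg$ transverse to $[\gg,\gg]$ is a central ideal with complement any hyperplane $\hg\supseteq[\gg,\gg]$ missing $z$ (such an $\hg$ is automatically an ideal), so $\gg=\hg\oplus\RR z$ with $\RR z\cong\ag_1$. Peeling off such abelian summands, one reaches a factor with $\zg\subseteq[\gg,\gg]$. Since the smallest nonabelian nilpotent Lie algebra is $\ng_3$ of dimension $3$, a product of two nonabelian factors would have dimension $\ge 6$; hence in our range the nonabelian indecomposable algebras are exactly those with $\zg\subseteq[\gg,\gg]$, the decomposable nonabelian ones are $\ag_2\times\ng_3$ and $\ag_1\times\ng_4$, and the purely abelian ones are the $\ag_n$. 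It thus remains to classify the indecomposable algebras in each dimension $\le 5$.

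For non-redundancy I would read off isomorphism invariants from the lower central series $\gg^0=\gg\supseteq\gg^1=[\gg,\gg]\supseteq\gg^2=[\gg,\gg^1]\supseteq\cdots$: the nilpotency class (least $c$ with $\gg^c=0$), the dimension sequence $(\dim\gg^i)_i$, and $\dim\zg$. A direct computation gives, for $\ng_{5,1},\dots,\ng_{5,6}$, the sequences $(5,1)$, $(5,2)$, $(5,2,1)$, $(5,3,2)$, $(5,3,2,1)$, $(5,3,2,1)$; this separates all six except the two filiform algebras $\ng_{5,5},\ng_{5,6}$, which share $(5,3,2,1)$ and $\dim\zg=1$. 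These two I would separate by a finer invariant, namely the existence of an abelian ideal of codimension one --- present for $\ng_{5,5}$, absent for $\ng_{5,6}$ --- equivalently the value of $\min\{\operatorname{rank}(\ad x)\mid x\notin[\gg,\gg]\}$, which equals $1$ for $\ng_{5,5}$ and $2$ for $\ng_{5,6}$. The lower-dimensional entries are separated in the same way.

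For completeness I would argue by nilpotency class. Class $2$ means $[\gg,\gg]\subseteq\zg$, and indecomposability forces $\zg=[\gg,\gg]=:\mg$; the algebra is then encoded by a surjective alternating map $\omega\colon\textstyle\bigwedge^{2}(\gg/\mg)\to\mg$ with trivial radical (otherwise an abelian factor splits off). For $\dim\mg=1$ this is a single nondegenerate alternating form, forcing $\dim(\gg/\mg)$ even and giving, in dimension $5$, the Heisenberg algebra $\ng_{5,1}$; for $\dim\mg=2$, $\dim(\gg/\mg)=3$, the identification $\textstyle\bigwedge^{2}\RR^3\cong\RR^3$ shows all admissible pencils of alternating forms are equivalent, yielding the single normal form $\ng_{5,2}$; larger $\dim\mg$ is impossible in dimension $5$. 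For class $3$ (so $\gg^2\ne 0=\gg^3$), counting $\dim(\gg/\gg^1)$, $\dim(\gg^1/\gg^2)$, $\dim\gg^2$ subject to the obvious bracket constraints leaves only the sequences $(5,2,1)$ and $(5,3,2)$, and an adapted basis normalizes the structure constants to $\ng_{5,3}$ and $\ng_{5,4}$. Class $4$ is the filiform case $\dim(\gg/[\gg,\gg])=2$, where the classical normalization of the filiform bracket in dimension $5$ produces exactly $\ng_{5,5}$ and $\ng_{5,6}$. The cases $\dim\le 4$ are handled by the same analysis and give the remaining entries.

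The main obstacle is the normalization inside the completeness argument: once a basis adapted to the lower central series is fixed, one must show the residual free structure constants can always be brought by an allowable change of basis to exactly one listed form, with nothing listed twice and nothing omitted. Concretely this reduces to a short list of classical normal-form problems --- nondegenerate alternating forms, pencils of skew forms on $\RR^3$, and the Jordan/Kronecker-type normalization of the filiform bracket --- together with the bookkeeping that the resulting forms are genuinely distinct, which is precisely what the invariants of the non-redundancy step certify. Since this classification is classical, in the paper one may alternatively simply cite the known list of nilpotent Lie algebras of dimension $\le 5$ over $\RR$ used in the orbit computations above.
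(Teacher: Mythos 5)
The paper does not prove this statement at all: its entire proof is the citation ``See \cite[Prop.~1]{Dix58}'', i.e.\ it simply invokes Dixmier's classical classification of nilpotent Lie algebras of dimension $\le 5$. Your proposal instead outlines a self-contained proof, so the two take genuinely different routes. Your skeleton is sound: the reduction to indecomposables via splitting off a central line complementary to $[\gg,\gg]$ is correct, and the observation that in dimension $\le 5$ a nonabelian algebra with $\zg\subseteq[\gg,\gg]$ is automatically indecomposable (two nonabelian factors would need dimension $\ge 6$) is exactly the right bookkeeping. Your lower-central-series invariants check out against the structure constants listed in the paper ($\ng_{5,1},\dots,\ng_{5,6}$ give $(5,1)$, $(5,2)$, $(5,2,1)$, $(5,3,2)$, $(5,3,2,1)$, $(5,3,2,1)$), and the codimension-one abelian ideal (equivalently $\min\{\rank\ad x\mid x\notin[\gg,\gg]\}$) does separate $\ng_{5,5}$ from $\ng_{5,6}$. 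What your route buys is independence from the literature and explicit invariants that are reused elsewhere in the paper (real rank, index); what the citation buys is brevity and the avoidance of the normal-form verifications you yourself flag as the main obstacle. Those verifications are real work that your sketch defers: for instance, in the class-$3$ analysis you should also explicitly rule out the sequence $(5,3,1)$ (it dies because $\dim(\gg^1/\gg^2)\le\dim\bigl(\textstyle\bigwedge^2(\gg/\gg^1)\bigr)$), and showing that each admissible sequence admits exactly one normal form requires the pencil and filiform normalizations to be carried out, not merely named. Since you close by conceding that one may simply cite the known list, your proposal is in the end consistent with what the paper actually does, and nothing in it is wrong --- it is just not yet a complete proof without the deferred normal-form computations.
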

	
	\begin{proof}
		See \cite[Prop. 1]{Dix58}.
	\end{proof}

	

\begin{lemma}\label{L5}
	If $G$ is a nilpotent Lie group with $\dim G\le 5$ and $G$ is not isomorphic to the filiform group $F_5$, then $G$ is type~$\RRa_1$. 
\end{lemma}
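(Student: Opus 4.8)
The plan is to go through the classification of nilpotent Lie algebras of dimension $\le 5$ recorded in Proposition~\ref{5D_classif} and to split the finite list of groups into those all of whose coadjoint orbits are flat and those with some non-flat orbit. If all orbits are flat, then $G$ is type~$\RRa_1$ directly by Proposition~\ref{P3}. This disposes of the abelian algebras $\ag_1,\dots,\ag_5$ and of the $2$-step nilpotent ones $\ng_3$, $\ag_1\times\ng_3$, $\ag_2\times\ng_3$, $\ng_{5,1}$, $\ng_{5,2}$; here flatness is automatic because for a $2$-step algebra $[\gg,\gg]\subseteq\zg\subseteq\gg(\xi)$, so $\gg(\xi)$ is an ideal and $\Oc_\xi=\xi+\gg(\xi)^\perp$ for every $\xi$ by \cite[Thm.~3.2.3]{CG90}. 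The groups left to treat are exactly the non-flat ones other than $F_5=\ng_{5,5}$, namely $\ng_4$, $\ag_1\times\ng_4$, $\ng_{5,3}$, $\ng_{5,4}$ and $\ng_{5,6}$.

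For these I would not use Proposition~\ref{P3} (whose proof rests on \cite[Lemma~6.8]{BBL17}, available only for flat orbits) but rather Lemma~\ref{L5b}. Stratify $\widehat G\simeq\gg^*/G$ by coadjoint orbit dimension: if $d_1>\cdots>d_n=0$ are the orbit dimensions that occur, put $D_k:=\bigcup_{d\ge d_k}(\gg^*/G)_d$. Orbit dimension is lower semicontinuous, so each $D_k$ is open, the family $\emptyset=D_0\subseteq\cdots\subseteq D_n=\widehat G$ is increasing, and $\widehat G\setminus D_{n-1}=(\gg^*/G)_0=[\gg,\gg]^\perp$. Once the hypotheses of Lemma~\ref{L5b} are verified, every maximal $S\in Q(\widehat G)$ lies in $[\gg,\gg]^\perp$; since $[\gg,\gg]^\perp$ is itself a maximal element of $Q(\widehat G)$ by Proposition~\ref{maximal}, this forces $S=[\gg,\gg]^\perp$, which is precisely type~$\RRa_1$. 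A short computation of the forms $B_\xi(X,Y)=\langle\xi,[X,Y]\rangle$ shows that the orbit dimensions are $\{0,2\}$ for $\ng_4$, $\ag_1\times\ng_4$, $\ng_{5,4}$ and $\{0,2,4\}$ for $\ng_{5,3}$, $\ng_{5,6}$, so $n\in\{2,3\}$ and the filtration is very short.

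It then remains to verify hypotheses (a) and (b) of Lemma~\ref{L5b} for $k=1,\dots,n-1$. Condition~(b) for a net of orbits degenerating onto a character is exactly the mechanism of Lemma~\ref{30december2017}: choosing polarizations $\pg_j$ at $\xi_j$ with limit $\mg$, the set of limit points contains the connected set $L_0=q(\eta+\mg^\perp)$, which has more than one point because $\dim\Oc_\eta=0<\dim(\eta+\mg^\perp)$. For the three-stratum algebras the corresponding statement for a net of $4$-dimensional orbits degenerating onto a $2$-dimensional orbit $\Oc_\eta$ follows by the same polarization/induced-representation argument, once one checks that $\eta+\mg^\perp$ is not contained in $\Oc_\eta$. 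Condition~(a) requires each positive-dimensional stratum $(\gg^*/G)_{d_k}$ to be Hausdorff, locally connected and non-closed; non-closedness is immediate (strata of lower dimension lie in its closure), and local connectedness follows from the manifold structure of a Hausdorff fixed-dimension orbit space.

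The crux, and the only genuinely non-formal point, is the Hausdorffness of the positive-dimensional strata for the five algebras above; this must be established by producing enough continuous coadjoint invariants to separate the orbits of each fixed dimension, and it is exactly this property that separates these groups from $F_5$. Indeed $F_5=\ng_{5,5}$ is of class~$\Tc$ with orbit dimensions $\{0,2\}$ just like $\ng_4$ and $\ng_{5,4}$, and condition~(b) holds for it by Lemma~\ref{30december2017}; the obstruction is that its generic stratum $(\gg^*/G)_2$ fails to be Hausdorff — the classical ``variable multiplicity'' phenomenon of the threadlike group analysed in \cite{Lu90}, \cite{ArSoKaSc99} and \cite{AKLSS01}. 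When the generic stratum is non-Hausdorff the scaling argument underlying Lemma~\ref{L5b} (adjoining the compact path $t\mapsto\Oc_{t\xi}$, $t\in[t_0/2,1]$, to a candidate maximal set) breaks down, and a maximal connected closed Hausdorff subset may live entirely among the generic orbits, away from the characters; this is why $F_5$ must be excluded here and is treated separately in Theorem~\ref{fili_th}.
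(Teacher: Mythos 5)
Your overall architecture is the same as the paper's: Proposition~\ref{P3} for the groups with flat coadjoint orbits, and Lemma~\ref{L5b} combined with Proposition~\ref{maximal} for the rest. But your execution has a genuine gap exactly where you yourself locate ``the crux'': you never establish that the positive-dimensional strata are Hausdorff (nor condition~(b) of Lemma~\ref{L5b} for nets of $4$-dimensional orbits degenerating onto a $2$-dimensional one, which you reduce to the unverified claim that $\eta+\mg^\perp\not\subseteq\Oc_\eta$ --- not automatic here, since both sets are $2$-dimensional). The paper does not attempt to prove these facts abstractly; it imports the explicit layer-by-layer descriptions of $\widehat{F_4}$, $\widehat{N_{5,4}}$ and $\widehat{N_{5,6}}$ from Dixmier, Fell and Ludwig--Regeiba, where the homeomorphism type of each layer ($\RR\times\RR^\times$, $(0,\infty)\times\TT$, $\RR^\times$, \dots) and the limit behaviour of sequences are already computed. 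Moreover, your specific choice of filtration --- stratification by orbit dimension --- is not the one the paper uses and is likely not usable for $\ng_{5,6}$: there the paper's filtration has \emph{four} layers, with the $2$-dimensional orbits split into two separate strata $\RR\times\RR^\times$ and $\RR^\times$, precisely because the full set of $2$-dimensional orbits need not form a single Hausdorff layer. So condition~(a) of Lemma~\ref{L5b} can fail for your $D_k$'s even though it holds for a finer filtration; Lemma~\ref{L5b} does not require the layers to be the orbit-dimension strata, and you should not tie yourself to them.

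Two smaller points. First, your case split is slightly off: $\ng_{5,3}$ has \emph{all} coadjoint orbits flat (its generic stabilizer is the center and the stabilizers of the $2$-dimensional orbits are ideals), so it is disposed of by Proposition~\ref{P3} and need not be run through Lemma~\ref{L5b}; the residual list is only $\ng_4$, $\ag_1\times\ng_4$, $\ng_{5,4}$, $\ng_{5,6}$. Flatness of all orbits is not equivalent to being $2$-step. Second, your closing discussion of why $F_5$ must be excluded (non-Hausdorffness of its generic stratum) is a correct and useful piece of intuition that the paper leaves implicit, but it does not substitute for the missing verifications above.
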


\begin{proof}
	If all the coadjoint orbits of $G$ are flat, then $G$ is type~$\RRa_1$ by Proposition~\ref{P3}. 
	If not all the coadjoint orbits of $G$ are flat then, as a direct consequence of the classification given in Proposition~\ref{5D_classif}, $\gg$ is isomorphic to one of the Lie algebras $\ng_4=\fg_4$, $\ag_1\times\ng_4=\ag_1\times\fg_4$, $\ng_{5,4}$, $\ng_{5,6}$. 
	In any of these cases, we will prove that for any maximal element $S$ of $Q(\widehat G)$ one has $S\subseteq[\gg,\gg]^\perp$ and then, 
	by the maximality property of $S$ it follows that $S=[\gg,\gg]^\perp$. 
	
	We discuss the above four cases separately: 
	
	Case 1: $\gg=\ng_4=\fg_4$. 
	By \cite[Prop. 2]{Dix60} or \cite[Th. 5.4]{Fe62} we obtain open subsets 
	\begin{equation}\label{L5_proof_eq1}
	\emptyset=D_0\subset D_1\subset D_2\subset D_3=\widehat{F_4}
	\end{equation}
	where $D_1=D_1\setminus D_0\simeq \RR\times\RR^\times$ is dense in $\widehat{F_4}$, 
	$D_2\setminus D_1\simeq\RR^\times$, and $D_3\setminus D_2=[\fg_4,\fg_4]^\perp\simeq\RR^2$. 
	In addition, $D_1$ and $D_2\setminus D_1$ are not closed in $\widehat{F_4}$ and all the hypotheses of Lemma~\ref{L5b} are satisfied with $n=3$ and $X=\widehat{F_4}$. 
	It then follows by that lemma that for every maximal element $S$ of $Q(\widehat G)$ one has $S\subseteq \widehat{F_4}\setminus D_2=[\fg_4,\fg_4]^\perp$, and we are done. 
	
	Case 2: $\gg=\ag_1\times\ng_4=\ag_1\times\fg_4$. 
	Using the notation of Case~1, we obtain 
	$$\emptyset=D_0\subset \ag_1^*\times D_1\subset \ag_1^*\times D_2\subset \ag_1^*\times D_3=\widehat{G}$$
	and this family of open subsets of $\widehat{G}$ satisfies all the hypotheses of Lemma~\ref{L5b} since so does the family \eqref{L5_proof_eq1} from Case~1. 
	By that lemma we then obtain again that for any maximal element $S$ of $Q(\widehat G)$ one has $S\subseteq[\gg,\gg]^\perp$. 
	
	Case 3: $\gg=\ng_{5,4}$. 
	By \cite[Th. 5.6]{Fe62} or \cite[Th. 8.2]{LuRe15} we obtain the open subsets 
	$$\emptyset=D_0\subset D_1\subset D_2\subset D_3=\widehat{N_{5,4}}$$
	where $D_1=D_1\setminus D_0\simeq(0,\infty)\times\TT$ is dense in $\widehat{N_{5,4}}$, $D_2\setminus D_1\simeq\RR^\times$, 
	and $D_3\setminus D_2=[\ng_{5,4},\ng_{5,4}]^\perp\simeq\RR^2$. 
	In addition, $D_1$ and $D_2\setminus D_1$ are not closed in  $\widehat{N_{5,4}}$ and all the hypotheses of Lemma~\ref{L5b} are satisfied with $n=3$ and $X=\widehat{N_{5,4}}$. 
	By that lemma we then obtain again that for any maximal element $S$ of $Q(\widehat G)$ one has $S\subseteq[\gg,\gg]^\perp$. 
	
	Case 4: $\gg=\ng_{5,6}$. 
	By \cite[Prop. 7]{Dix60} we obtain the open subsets 
	$$\emptyset=D_0\subset D_1\subset D_2\subset D_3\subset D_4=\widehat{N_{5,6}}$$
	where $D_1=D_1\setminus D_0\simeq\RR^\times$ is dense in $\widehat{N_{5,6}}$, $D_2\setminus D_1\simeq\RR\times\RR^\times$, 
	$D_3\setminus D_2\simeq \RR^\times$, 
	and $D_4\setminus D_3=[\ng_{5,6},\ng_{5,6}]^\perp\simeq\RR^2$. 
	In addition, $D_1$, $D_2\setminus D_1$, and $D_3\setminus D_2$  are not closed in  $\widehat{N_{5,6}}$ and all the hypotheses of Lemma~\ref{L5b} are satisfied with $n=3$ and $X=\widehat{N_{5,4}}$. 
	By that lemma we then obtain again that for any maximal element $S$ of $Q(\widehat G)$ one has $S\subseteq[\gg,\gg]^\perp$, and this completes the proof.
\end{proof}

\begin{proposition}\label{P6}
	If $G$ is a nilpotent Lie group with $\dim G\le 5$, then $G$ is type~$\RRa$. 
\end{proposition}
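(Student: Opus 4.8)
The plan is to reduce the statement to two earlier results: Lemma~\ref{L5} already disposes of every group of dimension $\le 5$ except the five-dimensional filiform group $F_5$, and for $F_5$ the right tool is Proposition~\ref{P4}. So I would split into two cases according to whether or not $G$ is isomorphic to $F_5$.

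Suppose first that $G$ is a nilpotent Lie group with $\dim G\le 5$ that is \emph{not} isomorphic to $F_5$. Then Lemma~\ref{L5} applies and gives that $G$ is type~$\RRa_1$, i.e. $[\gg,\gg]^\perp$ is the unique maximal element of $Q(\widehat G)$. By the last assertion of Lemma~\ref{L2}, every type~$\RRa_1$ group is type~$\RRa$, so $G$ is type~$\RRa$ and there is nothing further to prove in this case.

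It remains to treat $G=F_5$, whose Lie algebra is $\fg_5=\ng_{5,5}$ with basis $X_1,\dots,X_5$ and nonzero brackets $[X_5,X_4]=X_3$, $[X_5,X_3]=X_2$, $[X_5,X_2]=X_1$. Here I would verify directly that $F_5$ is of class~$\Tc$ and then invoke Proposition~\ref{P4}. Writing $\xi=(\xi_1,\dots,\xi_5)$ in the dual basis and noting that $[\gg,\gg]=\spa\{X_1,X_2,X_3\}$, the condition $\xi\in\gg^*\setminus[\gg,\gg]^\perp$ means $(\xi_1,\xi_2,\xi_3)\ne 0$. A short computation of the radical of the skew form $(X,Y)\mapsto\langle\xi,[X,Y]\rangle$ shows that $X=\sum_i a_iX_i$ lies in $\gg(\xi)$ precisely when $a_5\xi_1=a_5\xi_2=a_5\xi_3=0$ and $a_2\xi_1+a_3\xi_2+a_4\xi_3=0$. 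When $(\xi_1,\xi_2,\xi_3)\ne 0$ this forces $a_5=0$ and imposes a single nontrivial linear condition on $(a_2,a_3,a_4)$, with $a_1$ free, so $\dim\gg(\xi)=3$ for every such $\xi$. Consequently $\ind\gg=\min\{\dim\gg(\eta)\mid\eta\in\gg^*\}=3$ (the minimum being attained on the dense set where $(\xi_1,\xi_2,\xi_3)\ne0$), and $\dim\gg(\xi)=\ind\gg$ throughout $\gg^*\setminus[\gg,\gg]^\perp$; this is exactly the defining property of class~$\Tc$. Proposition~\ref{P4} then gives that $F_5$ is type~$\RRa$, which completes the proof.

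The only genuine work is the coadjoint-orbit computation for $F_5$: one must check that \emph{all} its nontrivial orbits have the same dimension $2$, equivalently that $\dim\gg(\xi)$ is constant off the characters. This is what places $F_5$ in class~$\Tc$ even though its orbits are not flat (one checks that the generic $\gg(\xi)$ is not an ideal), which is also the reason $F_5$ is excluded from Lemma~\ref{L5} and cannot be reached through Proposition~\ref{P3}. Everything else is a routine reduction to Lemmas~\ref{L5} and~\ref{L2} and Proposition~\ref{P4}.
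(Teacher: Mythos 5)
Your proposal is correct and follows exactly the paper's own two-case argument: Lemma~\ref{L5} plus Lemma~\ref{L2} for every group not isomorphic to $F_5$, and class~$\Tc$ plus Proposition~\ref{P4} for $F_5$ itself. The only difference is that you spell out the (correct) isotropy computation showing $\dim\gg(\xi)=3$ off the characters of $\fg_5$, which the paper takes as known.
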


\begin{proof}
	If $G$ is not isomorphic to the filiform group~$F_5$, then $G$ is type~$\RRa_1$ by Lemma~\ref{L5}, hence $G$ is type~$\RRa$ by Lemma~\ref{L2}. 
	
	If $G$ is isomorphic to~$F_5$, then $G$ is class~$\Tc$, hence $G$ is type~$\RRa$ by Proposition~\ref{P4}. 
	This completes the proof. 
\end{proof}

\begin{proposition}\label{P7}
	If $G_1$ and $G_2$ are nilpotent Lie groups satisfying the condition $\max\{\dim G_1,\dim G_2\}\le 5$, then one has 
	$$G_1\text{ is isomorphic to }G_2\iff
	\widehat{G_1}\text{ is homeomorphic to }\widehat{G_2}. $$
\end{proposition}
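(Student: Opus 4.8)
The plan is to prove the equivalence by treating the forward implication (isomorphic groups have homeomorphic duals) as immediate, and concentrating on the nontrivial reverse implication: a homeomorphism $\widehat{G_1}\simeq\widehat{G_2}$ forces $G_1\simeq G_2$. The strategy is to extract enough numerical and structural invariants from the topology of the unitary dual to pin down the isomorphism class via the classification in Proposition~\ref{5D_classif}. First I would record the invariants that a homeomorphism $\psi\colon\widehat{G_1}\to\widehat{G_2}$ must preserve. By Proposition~\ref{P6} both groups are type~$\RRa$, and by Lemma~\ref{L2} (together with Proposition~\ref{P6}, or directly via type~$\RRa_1$ whenever neither group is $F_5$) we get $\psi([\gg_1,\gg_1]^\perp)=[\gg_2,\gg_2]^\perp$ and hence $\dim[\gg_1,\gg_1]^\perp=\dim[\gg_2,\gg_2]^\perp$. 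Since $\dim[\gg_i,\gg_i]^\perp=\dim\gg_i-\dim[\gg_i,\gg_i]$, and Remark~\ref{27december2017-1600} gives $\ind G_1=\ind G_2$, we obtain several constraints linking $\dim G_i$, $\dim[\gg_i,\gg_i]$, and $\ind G_i$.

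Next I would dispose of the abelian and near-abelian cases quickly. By Lemma~\ref{abelian} (or Theorem~\ref{2H}) any $G_i$ with $\dim[\gg_i,\gg_i]\le 1$ is determined within exponential Lie groups by its dual, so if either group satisfies $\dim[\gg_i,\gg_i]\le 1$ the conclusion follows from Theorem~\ref{2H}; this covers $\ag_k$, the products $\ag_k\times\ng_3$ (Heisenberg times abelian), and settles $\ng_{5,1}$ as well. For the remaining groups, those with $\dim[\gg_i,\gg_i]\ge 2$, the invariants $\dim[\gg_i,\gg_i]$ and $\ind G_i$ already separate most isomorphism classes on the list. Since $\dim G_1=\dim G_2$ (this follows once $\dim[\gg_1,\gg_1]^\perp=\dim[\gg_2,\gg_2]^\perp$ is combined with the equality of $\dim[\gg_i,\gg_i]$, which I would derive from $\ind$ and the Morita-type arguments), I would proceed dimension by dimension, matching the finitely many algebras in Proposition~\ref{5D_classif} against their values of $(\dim G,\dim[\gg,\gg],\ind G)$ and the flatness of their coadjoint orbits.

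The main obstacle will be distinguishing the genuinely five-dimensional, non-flat indecomposable algebras whose coarse invariants coincide: $\ng_{5,4}$ (the filiform $F_5$) and $\ng_{5,6}$, and separating these from step-two algebras such as $\ng_{5,3}$ with matching $\dim[\gg,\gg]$ and $\ind$. Here the coarse invariants $(\dim,\dim[\gg,\gg],\ind)$ are not enough and I would need finer topological data from the stratification of $\widehat{G}$ by coadjoint-orbit dimension. The key tool is the filtration by open sets $\emptyset=D_0\subset D_1\subset\cdots\subset D_n=\widehat{G}$ used in the proof of Lemma~\ref{L5}: the \emph{number} $n$ of strata, together with the homeomorphism type of each stratum $D_j\setminus D_{j-1}$ (for instance $\RR\times\RR^\times$ versus $(0,\infty)\times\TT$ versus $\RR^\times$) and its position in the closure ordering, are preserved under $\psi$ and distinguish these cases. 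In particular the top non-trivial stratum of $\widehat{F_5}=\widehat{N_{5,4}}$ is homeomorphic to $(0,\infty)\times\TT$, whereas the corresponding open dense stratum for the step-two competitors is a product of Euclidean pieces, so invariance of domain and connectedness/homotopy considerations separate them.

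Finally, having matched the isomorphism types of the Lie algebras $\gg_1$ and $\gg_2$ via this combination of numerical invariants and stratum data, I would invoke the uniqueness in Proposition~\ref{5D_classif} to conclude $\gg_1\simeq\gg_2$, whence $G_1\simeq G_2$ since a nilpotent Lie group is determined up to isomorphism by its Lie algebra. The overall shape of the argument is thus: reduce to the reverse implication, extract the invariants $\dim[\gg,\gg]^\perp$, $\ind G$, and the stratification data of $\widehat{G}$ preserved by $\psi$, dispatch the $\dim[\gg,\gg]\le 1$ cases by Theorem~\ref{2H}, and resolve the finitely many remaining five-dimensional cases by direct comparison against the explicit dual-space descriptions available in the literature and recalled in Lemma~\ref{L5}.
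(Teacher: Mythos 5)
Your overall architecture (reduce to the reverse implication, extract $\ind G$ and $\dim[\gg,\gg]^\perp$ as homeomorphism invariants, dispatch the $\dim[\gg,\gg]\le 1$ cases by Theorem~\ref{2H}, then compare the finitely many remaining algebras from Proposition~\ref{5D_classif}) matches the paper's proof. However, there are two genuine gaps in the part that does the real work. First, your key tool for the hard cases is the assertion that the orbit-dimension stratification $\emptyset=D_0\subset\cdots\subset D_n=\widehat{G}$, the number of strata, and the homeomorphism type of each stratum are ``preserved under $\psi$.'' This is not justified: the $D_j$ are defined via coadjoint-orbit dimensions, not intrinsically from the topology of $\widehat{G}$, and nothing in the paper shows that an arbitrary homeomorphism of duals respects this filtration. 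The only piece of the stratification the paper can transport is the set of characters $[\gg,\gg]^\perp$, and even that requires the type~$\RRa_1$ property from Lemma~\ref{L5} (note that Proposition~\ref{P6}, which you cite, gives only type~$\RRa$, a statement about Morita equivalence that by itself says nothing about a homeomorphism of duals). Without a preserved stratification, ``invariance of domain and connectedness/homotopy considerations'' applied stratum by stratum do not get off the ground.

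Second, you misidentify the critical pairs. In the paper's notation $F_5=N_{5,5}$, not $N_{5,4}$, and $\ng_{5,4}$ versus $\ng_{5,6}$ is not a problematic pair (they already differ in $\ind$: $3$ versus $1$). The pairs that share both invariants $\RRa(C^*(G))$ and $\ind G$ are $(\ag_1\times\ng_4,\ \ng_{5,2})$ and $(\ng_{5,4},\ \ng_{5,5}=\fg_5)$. The first is separated by the Hausdorffness of $\widehat{G}\setminus[\gg,\gg]^\perp$, which is legitimate because the set of characters is canonically preserved. The second pair is the genuinely hard one and is not separable by the homeomorphism type of a generic stratum: the paper distinguishes $\widehat{N_{5,4}}$ from $\widehat{F_5}$ by the possible cardinalities of limit sets of properly convergent sequences (exactly three limit points occur in $\widehat{F_5}$, whereas in $\widehat{N_{5,4}}$ one gets one, two, or infinitely many), and proving the latter requires the nontrivial analytic argument that the set in \eqref{ineq} is either empty or infinite. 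This ingredient is entirely absent from your proposal, so the case $\ng_{5,4}$ versus $\ng_{5,5}$ remains open in your argument.
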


\begin{proof}
	Assume that $\widehat{G_1}$ is homeomorphic to $\widehat{G_2}$. 
	It follows by Lemma~\ref{27december2017-1555} that 
	\begin{equation}\label{P7_proof_eq1}
	\ind G_1=\ind G_2.
	\end{equation}
	Furthermore, if both $G_1$ and $G_2$ are isomorphic to the filiform group~$F_5$, then we are done. 
	Otherwise, if for instance $G_1$ is not isomorphic to $F_5$, then $G_1$ is type~$\RRa_1$ by Lemma~\ref{L5}. 
	It then follows by Lemma~\ref{L2} that 
	\begin{equation}\label{P7_proof_eq2}
	\RRa(C^*(G_1))=\RRa(C^*(G_2)).
	\end{equation}
	For any nilpotent Lie algebra $\gg$  and $G$ the corresponding 
	connected simply connected nilpotent Lie  group
	we have
	hence $\RRa(C^*(G))=\dim(\gg/[\gg,\gg])$ by~\eqref{RR_eq}. 
	
	On the other hand, $\gg_1$ and $\gg_2$ belong to the list of Lie algebras provided by Proposition~\ref{5D_classif}. 
	Using the detailed information that is available on the coarse partitions of the spaces of coadjoint orbits (see for instance \cite{Pe88}), one easily obtains the following values for the real rank and the index of the
	connected simply connected nilpotent Lie  groups corresponding to the Lie algebras in  the above mentioned list. 
	
	\begin{center}
		\begin{tabular}{ | c | c | c |}
			\hline
			$\gg$ 	& $\RRa(C^*(G))$ & $\ind(G)$  \\ \hline 
			$\ag_k$ ($1\le k\le 5$ )  	& $k$ &$ k$ \\ \hline
			$\ng_3(=\hg_3)$    	& 2 & 1 \\ \hline
			$\ag_1\times\ng_3(=\ag_1\times\hg_3)$ & 3 & 2 \\ \hline
			$\ng_4 (=\fg_4)$    & 2 & 2\\ \hline
			$\ag_2\times\ng_3 (=\ag_2\times\hg_3)$	& 4 &  3 \\ \hline
			$\ag_1\times\ng_4(=\ag_1\times\fg_4)$ & 3 & 3 \\ \hline
			$\ng_{5,1} (=\hg_5)$ & 4 & 1 \\ \hline
			$\ng_{5,2}$ & 3 & 3  \\ \hline
			$\ng_{5,3}$ & 3 & 1  \\ \hline
			$\ng_{5,4}$ & 2 & 3  \\ \hline
			$\ng_{5,5}(=\fg_5)$ & 2 & 3  \\ \hline
			$\ng_{5,6}$ & 2 & 1  \\ \hline
		\end{tabular}
	\end{center}
	
	If either $G_1$ or $G_2$ is a direct product of an abelian group with a Heisenberg group, then $G_1$ is isomorphic to $G_2$ by Theorem~\ref{2H}.

	Because of these observations, it suffices to compare to each other the unitary dual spaces of any two Lie groups from the above list from which one has removed the Heisenberg groups and the abelian groups. 
	Comparing the values of $\RRa$ and $\ind$ from the above table, 
	one easily checks that only in dimension~5 one encounters pairs of distinct Lie algebras with groups having the same real rank and index, 
	namely
	\begin{enumerate}
		\item $\ag_1\times\ng_4(=\ag_1\times\fg_4)$ and $\ng_{5,2}$,  
		\item $\ng_{5,4}$ and $\ng_{5,5}(=\fg_5)$.
	\end{enumerate}
	So, due to \eqref{P7_proof_eq1}--\eqref{P7_proof_eq2}, 
	it remains to compare to each other the unitary dual spaces of the above pairs, which can be done as follows: 
	\begin{enumerate}
		\item There is no homeomorphism between the unitary duals of the groups $G_1:=A_1\times F_4$ and $G_2:=N_{5,2}$. 
		
		Indeed, any homeomorphism between the unitary duals of the groups $G_1$ and $G_2$ should map to each other the spaces of characters of these groups, by Lemma~\ref{L5} and Lemma~\ref{L2}. 
		On the other hand, by \cite[Prop. 4]{Dix60}, the group $G_2$ has the property that the complement of the characters $\widehat{G_2}\setminus[\gg_2,\gg_2]^\perp$ is Hausdorff in its relative topology. 
		However, the relative topology of the complement of the characters $\widehat{G_1}\setminus[\gg_1,\gg_1]^\perp$ does not have the Hausdorff property as a direct consequence of  \cite[Prop. 2]{Dix60}.
		\item There is no homeomorphism between the unitary duals of the groups $N_{5,4}$ and $N_{5,5}(=F_5)$. 
		
		In fact, as noted in \cite[\S 5]{Lu90}, 
		there exists a properly convergent sequence in $\widehat{F_5}$ that has exactly three limit points. 
		
		On the other hand \cite[Thm. 5.6]{Fe62} (see also \cite[Thm. 8.2]{LuRe15}) shows that
		the set of limit points of any properly convergent sequence in $\widehat{N_{5,4}}$ contains either one point, or two points, or infinitely many points, as soon as we proved the following assertion:
		
		Assume that $\lim\limits_{n\to\infty}s_n=0$ in $\RR$, 
		$\lim\limits_{n\to\infty}w_n=0$ in $(0,\infty)$, 
		and $\{\theta_n\}_{n\ge 1}$ is a sequence of real numbers. 
		Then the set 
		\begin{equation}\label{ineq}
		\Bigl\{(u_1,u_2)\in\RR^2\mid
		\liminf\limits_{n\to\infty}
		\Bigl (\frac{s_n}{w_n}-u_1\sin\theta_n-u_2\cos\theta_n \Bigr )\le0
		\Bigr\}
		\end{equation}
		is either empty or infinite.

		To prove this, denote by $E\subseteq\RR^2$ the set in \eqref{ineq}, and assume that $E\ne \emptyset$, 
		so that we can select a point $(u_1,u_2)\in E$. 
		
		Restricting to a suitable subsequence, we may assume that 
		all the real numbers in the sequence $\{\sin\theta_n\}_{n\ge 1}$ have the same sign $\epsilon_1\in\{\pm1\}$, and likewise all the real numbers in the sequence  $\{\cos\theta_n\}_{n\ge 1}$ 
		have the same sign $\epsilon_2\in\{\pm1\}$. 
		
		Let $u_j'\in\RR$ arbitrary with $0\le(u_j'-u_j)\varepsilon_j$ 
		for $j=1,2$. 
		Then for every $n\ge 1$ one has 
		$0\le (u_1'-u_1)\varepsilon_1\vert\sin\theta_n\vert 
		= (u_1'-u_1)\sin\theta_n$ 
		and similarly $0\le  (u_2'-u_2)\cos\theta_n$. 
		We then obtain 
		$$(\forall n\ge 1)\quad 
		\frac{s_n}{w_n}-u_1'\sin\theta_n-u_2'\cos\theta_n
		\le \frac{s_n}{w_n}-u_1\sin\theta_n-u_2\cos\theta_n$$
		and, since $(u_1,u_2)\in E$, it then follows that $(u_1',u_2')\in E$. 
		As the set of points $(u_1',u_2')\in\RR^2$ with 
		$0\le(u_j'-u_j)\varepsilon_j$ 
		for $j=1,2$ is infinite, $E$ is infinite.

	\end{enumerate}
	This completes the proof. 
\end{proof}

\begin{remark}
	\normalfont
	Neither of Propositions \ref{P6} and \ref{P7} is stronger than the other one. 
\end{remark}

We now can take another step towards the proof of Theorem~\ref{5D}. 

\begin{proposition}\label{6D-MD}
	Let $G_1$ be a Lie group of class~$\Tc$ with $\dim G_1\le 5$. 
	If $G_2$ is a nilpotent Lie group for which $C^*(G_1)$ is Morita equivalent to $C^*(G_2)$, then $G_1$ is isomorphic to $G_2$. 
\end{proposition}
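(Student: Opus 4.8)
The plan is to transport the two invariants $\ind$ and $\dim[\gg,\gg]^\perp$ from $G_1$ to $G_2$, use them together with the class~$\Tc$ inequality of Lemma~\ref{MD2} to bound $\dim G_2$, and then finish with the classification result Proposition~\ref{P7}. I would split the argument according to the size of the commutator ideal $[\gg_1,\gg_1]$.

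First I would record what Morita equivalence gives us. Since $C^*(G_1)$ and $C^*(G_2)$ are Morita equivalent there is a homeomorphism $\widehat{G}_1\simeq\widehat{G}_2$, so Remark~\ref{27december2017-1600} yields $\ind G_1=\ind G_2$. Because $G_1$ is of class~$\Tc$, Proposition~\ref{P4} shows that $G_2$ is of class~$\Tc$ as well and that $\dim[\gg_1,\gg_1]^\perp=\dim[\gg_2,\gg_2]^\perp$. If $\dim[\gg_1,\gg_1]\le 1$, then the hypothesis of Theorem~\ref{2H} holds for $G_1$, and applying that theorem to the homeomorphism $\widehat{G}_1\simeq\widehat{G}_2$ (valid since $G_2$, being nilpotent, is exponential) already gives $G_1\simeq G_2$. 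So the remaining case is $\dim[\gg_1,\gg_1]\ge 2$, in which $G_1$ is noncommutative; hence its maximal coadjoint orbit has dimension $\ge 2$, so that $\ind\gg_1=\dim\gg_1-\max_\xi\dim\Oc_\xi\le\dim\gg_1-2\le 3$, while $\dim[\gg_1,\gg_1]^\perp=\dim\gg_1-\dim[\gg_1,\gg_1]\le 3$.

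The main obstacle is that Morita equivalence does not a priori control $\dim G_2$, so the crux of the argument is to bound it. Here I would exploit that $G_2$ is of class~$\Tc$ and invoke Lemma~\ref{MD2} for $G_2$, which gives $\dim[\gg_2,\gg_2]\le\ind\gg_2=\ind\gg_1\le 3$. Combined with $\dim[\gg_2,\gg_2]^\perp=\dim[\gg_1,\gg_1]^\perp\le 3$, this yields $\dim G_2=\dim[\gg_2,\gg_2]^\perp+\dim[\gg_2,\gg_2]\le 6$. To rule out $\dim G_2=6$, I would observe that equality would force both summands to equal $3$, so $\dim[\gg_2,\gg_2]=\ind\gg_2=3$ and the maximal coadjoint orbit of $G_2$ would have dimension $\dim\gg_2-\ind\gg_2=3$; this is impossible, since coadjoint orbits are symplectic and hence of even dimension. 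Therefore $\dim G_2\le 5$.

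Finally, with $\dim G_1\le 5$, $\dim G_2\le 5$, and the homeomorphism $\widehat{G}_1\simeq\widehat{G}_2$ all in hand, Proposition~\ref{P7} applies and gives $G_1\simeq G_2$, completing the proof. I expect the only genuinely delicate point to be the dimension bound, and in particular the parity observation that excludes $\dim G_2=6$ in the borderline case $\ind\gg_1=\dim[\gg_1,\gg_1]^\perp=3$; everything else is an assembly of the already-established Proposition~\ref{P4}, Lemma~\ref{MD2}, Theorem~\ref{2H}, and Proposition~\ref{P7}.
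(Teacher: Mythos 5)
Your proposal is correct, and its skeleton matches the paper's: transport $\ind$ and $\dim[\gg,\gg]^\perp$ to $G_2$ via Remark~\ref{27december2017-1600} and Proposition~\ref{P4}, bound $\dim G_2$ by Lemma~\ref{MD2}, dispose of the case $\dim[\gg_1,\gg_1]\le 1$ by Theorem~\ref{2H}, and finish with Proposition~\ref{P7}. Where you genuinely diverge is in two places. First, the paper runs an explicit case-by-case analysis over the classification list of class-$\Tc$ algebras of dimension $\le 5$, reading off $\ind$ and $\RRa$ from the table in the proof of Proposition~\ref{P7}; you replace this by the uniform bounds $\ind\gg_1\le\dim\gg_1-2\le 3$ and $\dim[\gg_1,\gg_1]^\perp\le 3$ valid whenever $\dim[\gg_1,\gg_1]\ge 2$, which is cleaner and avoids the enumeration. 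Second, and more substantially, to exclude $\dim G_2=6$ the paper invokes Lemma~\ref{29March2018}, whose proof uses Connes' Thom isomorphism and $K$-theory to show $\dim G_1-\dim G_2$ is even; you instead observe that equality in the bound forces $\dim[\gg_2,\gg_2]=\ind\gg_2=3$, so the (nontrivial, hence existing) maximal coadjoint orbits of $G_2$ would be $3$-dimensional, contradicting the even-dimensionality of coadjoint orbits. This is a more elementary argument of exactly the kind the paper itself uses elsewhere (Lemma~\ref{fili_L}), though it is less robust than the $K$-theoretic lemma: it works here only because the borderline case pins down both summands, whereas Lemma~\ref{29March2018} rules out a one-dimensional discrepancy unconditionally. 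Both routes are valid; yours trades generality for elementarity at that single step.
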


The proof of this proposition requires several lemmas.

\begin{lemma}\label{MD-indecomp}
	If $\gg$ is a nilpotent Lie algebra of class~$\Tc$ and there exist Lie algebras $\gg_1$ and $\gg_2$ with $\gg=\gg_1\times\gg_2$, then at least one of the Lie algebras $\gg_1$ and $\gg_2$ is abelian. 
\end{lemma}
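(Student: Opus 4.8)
The plan is to exploit the fact that being of class~$\Tc$ forces the coadjoint orbits of $\gg$ to take only two possible dimensions: the value $0$ (the characters, i.e. the points of $[\gg,\gg]^\perp$) and the single maximal value $d:=\dim\gg-\ind\gg$. In other words, \emph{every coadjoint orbit of $\gg$ that is not a point has the same dimension $d$}. I would argue by contradiction: assume that neither $\gg_1$ nor $\gg_2$ is abelian, and then produce two nontrivial coadjoint orbits of $\gg$ of different dimensions, contradicting this dichotomy.

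The key structural input is the behaviour of coadjoint orbits under direct products. For $\gg=\gg_1\times\gg_2$ one has $\gg^*=\gg_1^*\oplus\gg_2^*$, and the coadjoint action of $G=G_1\times G_2$ splits as a product action, so that $\Oc_{(\xi_1,\xi_2)}=\Oc_{\xi_1}\times\Oc_{\xi_2}$ and hence $\dim\Oc_{(\xi_1,\xi_2)}=\dim\Oc_{\xi_1}+\dim\Oc_{\xi_2}$. I would also use the standard fact that, for a nilpotent Lie algebra, a functional $\eta$ belongs to $[\gg,\gg]^\perp$ (equivalently, its orbit is a single point) exactly when $\eta$ vanishes on $[\gg,\gg]$; this is immediate from $\gg(\eta)=\gg \iff \langle\eta,[\gg,\gg]\rangle=0$.

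Since $\gg_1$ is non-abelian, $[\gg_1,\gg_1]\ne\{0\}$, so I can pick $\xi_1\in\gg_1^*$ not vanishing on $[\gg_1,\gg_1]$; then $d_1:=\dim\Oc_{\xi_1}>0$. Likewise, from $\gg_2$ non-abelian I obtain $\xi_2\in\gg_2^*$ with $d_2:=\dim\Oc_{\xi_2}>0$. Now I compare the functionals $(\xi_1,0)$ and $(\xi_1,\xi_2)$ in $\gg^*$. Both lie outside $[\gg,\gg]^\perp$, since each already fails to vanish on $[\gg_1,\gg_1]\subseteq[\gg,\gg]$, so each has a nontrivial orbit. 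Yet the orbit-product formula gives $\dim\Oc_{(\xi_1,0)}=d_1$ while $\dim\Oc_{(\xi_1,\xi_2)}=d_1+d_2>d_1$. This exhibits two nontrivial coadjoint orbits of unequal dimension, contradicting the defining property of class~$\Tc$. Therefore at least one of $\gg_1,\gg_2$ is abelian.

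The argument is essentially complete with these observations. The only points needing a little care are the orbit factorization $\Oc_{(\xi_1,\xi_2)}=\Oc_{\xi_1}\times\Oc_{\xi_2}$ for a direct product and the characterization of characters as the functionals vanishing on $[\gg,\gg]$; both are routine for nilpotent Lie groups, so I do not anticipate a genuine obstacle here.
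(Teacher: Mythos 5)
Your argument is correct and is essentially the paper's own proof in dual language: the paper picks $\xi_j\in\gg_j^*$ with $\gg_j(\xi_j)\subsetneqq\gg_j$ and compares the isotropy algebras of $(\xi_1,\xi_2)$ and $(0,\xi_2)$, which is exactly your comparison of the orbit dimensions $d_1+d_2$ versus $d_2$ via $\dim\Oc_\xi=\dim\gg-\dim\gg(\xi)$. No gap; the product formula for coadjoint orbits and the characterization of characters are used implicitly in the paper as well.
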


\begin{proof}
	Assuming for $j=1,2$ that $\gg_j$ is not abelian, it follows that there exists $\xi_j\in\gg_j^*$ with $\{0\}\subsetneqq\gg_j(\xi_j)\subsetneqq\gg_j$. 
	For $\xi:=(\xi_1,\xi_2)\in\gg_1^*\times\gg_2^*=\gg^*$ 
	one has $\gg(\xi)=\gg_1(\xi_1)\times\gg_2(\xi_2)$. 
	Similarly, for 
	$\eta:=(0,\xi_2)\in\gg_1^*\times\gg_2^*=\gg^*$ 
	one has $\gg(\eta)=\gg_1\times\gg_2(\xi_2)$, 
	hence $\gg(\eta)\subsetneqq\gg(\xi)\subsetneqq\gg$, and this shows that $\gg$ is not of class~$\Tc$, which is a contradiction with the hypothesis. 
\end{proof}

\begin{lemma}\label{MD-red}
	A Lie algebra $\gg$ is of class~$\Tc$ if and only if there exist an 
	integer $k\ge 0$ and an 
	indecomposable Lie algebra $\gg_0$ of class~$\Tc$ with $\gg=\ag_k\times\gg_0$. 
\end{lemma}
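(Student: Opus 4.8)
The plan is to reduce both implications to a single computation describing how the coadjoint isotropy behaves when an abelian direct factor is split off, and then to feed this into the structure theory together with Lemma~\ref{MD-indecomp}. The technical heart is the following observation: if $\gg=\ag_k\times\hg$ with $\ag_k$ central and abelian, then $[\gg,\gg]=[\hg,\hg]$, and for $\xi=(\alpha,\eta)\in\ag_k^*\times\hg^*=\gg^*$ and $X=(a,Y)\in\gg$ one has $[X,\gg]\subseteq\hg$ with $\langle\xi,[X,\gg]\rangle=\langle\eta,[Y,\hg]\rangle$. Hence $\gg(\xi)=\ag_k\times\hg(\eta)$, so $\dim\gg(\xi)=k+\dim\hg(\eta)$; taking minima gives $\ind\gg=k+\ind\hg$, while $\xi\in\gg^*\setminus[\gg,\gg]^\perp$ holds exactly when $\eta\in\hg^*\setminus[\hg,\hg]^\perp$. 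Combining these identities, the defining equality $\dim\gg(\xi)=\ind\gg$ on $\gg^*\setminus[\gg,\gg]^\perp$ holds if and only if $\dim\hg(\eta)=\ind\hg$ on $\hg^*\setminus[\hg,\hg]^\perp$; that is, \emph{$\gg$ is of class~$\Tc$ if and only if $\hg$ is}. I expect this isotropy computation, though routine, to be the main point on which everything rests.

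Granting this equivalence, the reverse implication is immediate: if $\gg_0$ is indecomposable of class~$\Tc$, then applying the equivalence with $\hg=\gg_0$ shows that $\gg=\ag_k\times\gg_0$ is of class~$\Tc$.

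For the forward implication I would first write $\gg$ as a finite direct product of indecomposable Lie algebras $\gg=\gg_1\times\cdots\times\gg_m$, which is possible by induction on $\dim\gg$ (keep splitting off a nontrivial ideal complement until no factor decomposes further). The claim is that \emph{at most one of the $\gg_i$ is non-abelian}: if, say, $\gg_1$ and $\gg_2$ were both non-abelian, then writing $\gg=\gg_1\times(\gg_2\times\cdots\times\gg_m)$ and invoking Lemma~\ref{MD-indecomp} would force $\gg_2\times\cdots\times\gg_m$ to be abelian, contradicting that $\gg_2$ is non-abelian. Grouping the abelian indecomposable factors (each a copy of $\ag_1$) into a single $\ag_k$ and calling the remaining factor $\gg_0$, we obtain $\gg=\ag_k\times\gg_0$ with $\gg_0$ indecomposable; since $\gg$ is of class~$\Tc$, the equivalence from the first paragraph shows $\gg_0$ is of class~$\Tc$. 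Finally, in the degenerate case where all factors are abelian, one has $\gg\cong\ag_n$ with $n\ge1$, and I would take $\gg_0=\ag_1$ and $k=n-1$, noting that $\ag_1$ is indecomposable and, being abelian, trivially of class~$\Tc$ (the set $\gg_0^*\setminus[\gg_0,\gg_0]^\perp$ is empty). This handles every case and completes the proof.
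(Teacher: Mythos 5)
Your proof is correct and follows essentially the same route as the paper's: both implications rest on Lemma~\ref{MD-indecomp} together with the routine fact that class~$\Tc$ passes between $\ag_k\times\hg$ and $\hg$, which the paper leaves as ``easily checked''/``straightforward'' and you spell out via the isotropy identity $\gg(\xi)=\ag_k\times\hg(\eta)$. The only cosmetic difference is that you decompose $\gg$ fully into indecomposable factors before invoking Lemma~\ref{MD-indecomp}, whereas the paper peels off one abelian factor per step of an induction on $\dim\gg$.
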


\begin{proof}
	If $\gg_0$ is of class~$\Tc$, then it is easily checked that $\ag_k\times\gg_0$ is of class~$\Tc$. 
	
	We prove the converse assertion by induction on $\dim\gg$. 
	If $\dim\gg=1$, then $\gg=\ag_1$, and we are done. 
	
	For the induction step, if $\gg$ is indecomposable, then we may set $k:=0$ and $\gg_0:=\gg$. 
	If $\gg$ is not indecomposable, then there exist Lie algebras $\gg_1$ and $\gg_2$ with  $\gg=\gg_1\times\gg_2$ and $\dim\gg_j\ge 1$ for $j=1,2$. 
	Since $\gg$ is of class~$\Tc$, it follows by Lemma~\ref{MD-indecomp} that one of the Lie algebras $\gg_1$ and $\gg_2$ is abelian. 
	We may assume that there exists an integer $k_1\ge 1$ with $\gg_1=\ag_{k_1}$. 
	Thus $\gg=\ag_{k_1}\times\gg_2$ with $\dim\gg_2<\dim\gg$. 
	Since $\gg$ is of class~$\Tc$, it is straightforward to check that 
	$\gg_2$ is of class~$\Tc$ and then, by the induction hypothesis, 
	there exist an integer $k_2\ge 0$ an an indecomposable Lie algebra $\gg_0$ of class~$\Tc$ with $\gg_2=\ag_{k_2}\times\gg_0$. 
	Thus $\ag=\ag_{k_1}\times\ag_{k_2}\times\gg_0=\ag_{k_1+k_2}\times\gg_0$, and this completes the induction step. 
\end{proof}

\begin{lemma}\label{29March2018}
	If $G_1$ and $G_2$ are simply connected solvable Lie groups for which $C^*(G_1)$ and $C^*(G_2)$ are Morita equivalent, then $\dim G_1-\dim G_2$ is an even integer. 
\end{lemma}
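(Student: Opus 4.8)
The plan is to reduce the parity statement to the homeomorphism invariance of the index $\ind G$. Since $C^*(G_1)$ and $C^*(G_2)$ are Morita equivalent, there is a homeomorphism $\widehat{G_1}\simeq\widehat{G_2}$ (cf.\ \cite{RaWi98}), so it suffices to extract from the topology of $\widehat{G}$ a single integer having the same parity as $\dim G$. The natural candidate is $\ind G$, and the whole argument rests on two facts: that $\dim G\equiv\ind G\pmod 2$ for purely symplectic reasons, and that $\ind G$ is read off from the topology of the spectrum.

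For the first fact I would use that every coadjoint orbit $\Oc_\xi$ carries the Kirillov--Kostant symplectic form, hence $\dim\Oc_\xi$ is even. Choosing $\xi\in\gg^*$ with $\dim\gg(\xi)=\min\{\dim\gg(\eta)\mid\eta\in\gg^*\}=\ind G$ gives a maximal-dimensional orbit $\dim\Oc_\xi=\dim\gg-\ind G$, which is therefore even; thus $\dim G=\dim\gg\equiv\ind G\pmod 2$. This holds for every finite-dimensional Lie group and in particular for our simply connected solvable ones, so it is free of any type~$\mathrm{I}$ hypothesis. For the second fact I would invoke the topological characterization of the index: by Lemma~\ref{27december2017-1555} (and Remark~\ref{27december2017-1600}), $\ind G$ is the unique $r$ for which $\widehat{G}$ contains an open dense subset homeomorphic to an open subset of $\RR^r$, uniqueness of $r$ being guaranteed by Brouwer's invariance of domain. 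Concretely, the set of $\xi$ with $\dim\gg(\xi)=\ind G$ is open dense in $\gg^*$, and since the quotient map $q\colon\gg^*\to\gg^*/G$ is open (Lemma~\ref{quot}) its image is an open dense subset of $\widehat{G}$ of dimension $\ind G$; a homeomorphism $\widehat{G_1}\simeq\widehat{G_2}$ carries such a subset to one of the same kind, forcing $\ind G_1=\ind G_2$. Combining the two facts yields $\dim G_1-\dim G_2\equiv\ind G_1-\ind G_2=0\pmod 2$.

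The main obstacle is making the invariance step rigorous at the stated level of generality. Lemma~\ref{27december2017-1555} is formulated for nilpotent groups, so to cover all simply connected solvable $G$ I would first record that the generic (maximal-orbit-dimension) part of $\widehat{G}$ is still an open dense subset homeomorphic to an open subset of $\RR^{\ind G}$: for exponential solvable $G$ this is immediate from the Kirillov--Bernat homeomorphism $\widehat{G}\simeq\gg^*/G$ together with openness of $q$, while for non-type-$\mathrm{I}$ solvable groups one must instead argue on the primitive ideal space, where the generic stratum retains this Euclidean structure even though the full spectrum need not be nicely behaved. Everything else is a formal consequence of the parity identity $\dim G\equiv\ind G\pmod 2$ and the homeomorphism invariance of $\ind$.
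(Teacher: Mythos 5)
Your reduction to the homeomorphism invariance of $\ind G$ is a genuinely different route from the paper's, and it does work where the supporting machinery exists, namely for nilpotent groups: there the parity identity $\dim G\equiv\ind G\pmod 2$ follows from the evenness of coadjoint orbits, and Lemma~\ref{27december2017-1555} together with Remark~\ref{27december2017-1600} makes $\ind G$ a homeomorphism invariant of $\widehat{G}$. Since the lemma is only ever invoked in the paper for pairs of nilpotent groups, your argument would actually suffice for those applications. But as a proof of the statement as written --- for arbitrary simply connected \emph{solvable} Lie groups --- there is a genuine gap, and it sits exactly at the point you defer to ``arguing on the primitive ideal space.'' For a non-type-I group such as the Mautner group $G=\RR\ltimes\CC^2$ (with an irrational winding action, $\dim G=5$), the generic stratum of $\Prim(C^*(G))$ is parametrized not by generic coadjoint orbits but by generic \emph{quasi-orbits} (orbit closures): by the Effros--Hahn/Gootman--Rosenberg description of primitive ideals of $C_0(\RR^4)\rtimes\RR$, the generic open dense piece of $\Prim(C^*(G))$ is homeomorphic to $(0,\infty)^2$. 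So the topologically visible ``index'' is $2$, while the orbit-theoretic index $\min\dim\gg(\xi)$ is $3$; your fact~1 holds for the latter, your fact~2 can at best hold for the former, and they differ in parity because $\dim\overline{\Oc_\xi}-\dim\Oc_\xi$ need not be even. The two halves of your argument therefore refer to different quantities precisely in the cases that make the solvable hypothesis nontrivial. (Even for exponential non-nilpotent groups, the claim that the generic stratum of $\gg^*/G$ is an open dense subset homeomorphic to an open subset of $\RR^{\ind G}$ is not established anywhere in the paper, which proves this only for nilpotent groups.)

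The paper's own proof avoids the spectrum entirely and is valid at the full stated generality: by Connes' Thom isomorphism, $K_0(C^*(G))\simeq K_{j_G}(\CC)$ with $j_G\equiv\dim G\pmod 2$ for every simply connected solvable $G$; Morita equivalence of separable $C^*$-algebras is stable isomorphism and hence preserves $K_0$; and $K_0(\CC)=\ZZ\not\simeq\{0\}=K_1(\CC)$ forces $j_{G_1}=j_{G_2}$. If you wish to keep your topological approach, you must either restrict the statement to nilpotent groups (enough for this paper's uses), or supply, for exponential groups, the missing stratification result identifying the generic layer of $\gg^*/G$ with an open subset of $\RR^{\ind G}$ --- and accept that the non-type-I case genuinely requires a different invariant.
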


\begin{proof}
	It follows by \cite[Sect. V, Cor. 7]{Co81} 
	that for any simply connected solvable Lie group $G$ there is a group isomorphism $K_0(C^*(G))\simeq K_{j_G}(\CC)$, where $j_G=0$ if $\dim G$ is an even integer, and $j_G=1$ if $\dim G$ is an odd integer. 
	Moreover, one has a group isomorphism $K_0(C^*(G))\simeq K_0(C^*(G)\otimes\Kc)$  
	by \cite[Cor. 6.2.11]{WO93}.  
	Thus the hypothesis implies $K_{j_{G_1}}(\CC)\simeq K_{j_{G_2}}(\CC)$. 
	
	On the other hand, 
	 it is well known that $K_0(\CC)=\ZZ$ and $K_1(\CC)=\{0\}$ (cf. \cite[\S 6.5]{WO93}), hence the existence of a group isomorphism $K_{j_{G_1}}(\CC)\simeq K_{j_{G_2}}(\CC)$ implies $j_{G_1}=j_{G_2}$, 
	and now the assertion follows at once. 
\end{proof}

\begin{proof}[Proof of Proposition~\ref{6D-MD}]
	By a simple analysis using Proposition~\ref{5D_classif} we see that every Lie algebra of class~$\Tc$ having dimension $n\le 5$ is isomorphic to precisely one of the following Lie algebras: 
	\begin{enumerate}[{\rm(a)}]
		\item {\it Case $n=1$}: $\ag_1$
		\item {\it Case $n=1$}: $\ag_2$
		\item {\it Case $n=3$}: $\ag_3$, $\ng_3$
		\item {\it Case $n=4$}: $\ag_4$, $\ag_1\times\ng_3$, $\ng_4$
		\item {\it Case $n=5$}: $\ag_5$, $\ag_2\times\ng_3$, $\ag_1\times\ng_4$, $\ng_{5,1}$, $\ng_{5,2}$, $\ng_{5,4}$, $\ng_{5,5}$ 
	\end{enumerate}
	We may assume without loss of generality that $G_1$ is nonabelian. 
	One has 
	\begin{equation}\label{6D-MD_proof_eq0}
	\ind G_1=\ind G_2\text{ and }\dim[\gg_1,\gg_1]^\perp=\dim[\gg_2,\gg_2]^\perp
	\end{equation}
	by Remark~\ref{27december2017-1600} and 
	Proposition~\ref{P4}, 
	respectively. 
	On the other hand, it follows by Proposition~\ref{P4} that $G_2$ is of class~$\Tc$ and then, 
	by Lemma~\ref{MD2}, we obtain 
	\begin{equation}\label{6D-MD_proof_eq1}
	\dim\gg_2\le \ind G_1+\dim[\gg_1,\gg_1]^\perp.
	\end{equation}

	Therefore we  need to discuss the cases below. 
	
	$\bullet$ $\dim\gg_1=3$. 
	Then $\gg_1=\ng_3$, 
	hence $\ind G_1=1$ and 
	$\dim[\gg_1,\gg_1]^\perp=2$.

	$\bullet$ $\dim\gg_1=4$. 
	If $\gg_1=\ag_1\times\ng_3$, then $\ind G_1=2$ and 
	$\dim[\gg_1,\gg_1]^\perp=3$. 
	If $\gg_1=\ng_4$, then $\ind G_1=2$ and 
	$\dim[\gg_1,\gg_1]^\perp=2$. 
	
	Thus, if $\dim G_1\le 4$, then we obtain $\dim G_2\le 5$ by \eqref{6D-MD_proof_eq1}, hence $G_1$ is isomorphic to $G_2$ by 
	Proposition~\ref{P7}. 
	
	$\bullet$ $\dim\gg_1=5$. 
	If $\gg_1=\ag_2\times\ng_3$, then Theorem~\ref{2H} is applicable. 
 	If $\gg_1$ is one of the Lie algebras 
	$\ag_1\times\ng_4$, $\ng_{5,1}$, $\ng_{5,2}$, $\ng_{5,4}$, or $\ng_{5,5}$, 
	then an inspection of the table from the proof of Proposition~\ref{P7}. 
	shows that 
	$\ind G_1+\dim[\gg_1,\gg_1]^\perp\le 6$. 
	
	Thus, if $\dim G_1=5$, then $\dim G_2\le 6$ by \eqref{6D-MD_proof_eq1}. 
	Moreover, Lemma~\ref{29March2018} shows that $\dim G_2\ne 6$, hence $\dim G_2\le 5$, and then 
	$G_1$ is isomorphic to $G_2$ by 
	Corollary~\ref{5D}. 
	\end{proof}

\subsection{Proof of Theorem~\ref{5D}}

\begin{lemma}\label{30March2018}
	Let $G_1$ and $G_2$ be nilpotent Lie groups for which $C^*(G_1)$ is Morita equivalent to $C^*(G_2)$, 
	and denote by $Z_j$ the center of $G_j$ for $j=1,2$. 
	If $\ind G_1=1$, then $\ind G_2=\dim Z_2=\dim Z_1=1$ and $C^*(G_1/Z_1)$ is Morita equivalent to $C^*(G_2/Z_2)$. 
\end{lemma}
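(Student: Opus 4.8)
The plan is to establish the three numerical equalities by soft invariance arguments and then to reduce the Morita statement to matching, across the Rieffel homeomorphism, the closed subsets of the two spectra that correspond to the quotients by the centers. First I would fix a homeomorphism $\psi\colon\widehat{G}_1\to\widehat{G}_2$, which exists by \cite[Cor.~3.33]{RaWi98}; then Remark~\ref{27december2017-1600} gives $\ind G_2=\ind G_1=1$. Applying the $r=1$ part of Lemma~\ref{27december2017-1555} to each $G_j$ yields $\dim Z_j=1$, a generator $W_j$ of $\zg_j$, and an open dense half‑line of generic orbits $\Gamma_j:=\{\Oc_{t\xi_j}\mid t\in\RR^\times\}\cong\RR^\times$ inside $\widehat{G}_j$. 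We may assume $G_1$ (hence $G_2$) is nonabelian, since otherwise $G_1\cong\RR$, both quotients are trivial, and the statement is immediate.

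The structural heart is to show that, for a nonabelian nilpotent $G$ with $\ind G=1$ and central generator $W$, the open dense set $U\subseteq\gg^*$ of points on a maximal (i.e.\ $(\dim\gg-1)$‑dimensional) orbit is exactly $\{\xi\mid\langle\xi,W\rangle\ne0\}$, so that $\Gamma=U/G$ and $\widehat{G}\setminus\Gamma=\zg^\perp/G=\widehat{G/Z}$. Since $W$ is central, the coadjoint action preserves $\xi\mapsto\langle\xi,W\rangle$ and each orbit lies in a single affine hyperplane $H_\nu=\{\langle\cdot,W\rangle=\nu\}$. On $H_0=\zg^\perp\cong(\gg/\zg)^*$ the action factors through $G/Z$; as the center of $\gg/\zg$ is nonzero, every such orbit has positive‑dimensional stabilizer, hence dimension $\le\dim\gg-2$, so $H_0$ carries no maximal orbit. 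On the other hand $U$ is scaling‑invariant, because $\gg(t\xi)=\gg(\xi)$ for $t\ne0$, so it is an open dense cone and cannot be contained in the hyperplane $\zg^\perp$; picking $\xi_0\in U$ with $\langle\xi_0,W\rangle\ne0$ and rescaling shows $U\cap H_\nu\ne\emptyset$ for every $\nu\ne0$. Any maximal orbit meeting such an $H_\nu$ is closed by Lemma~\ref{GCR}, of dimension $\dim\gg-1=\dim H_\nu$, hence open in the connected manifold $H_\nu$, so it equals $H_\nu$. Thus each $H_\nu$ with $\nu\ne0$ is a single maximal orbit, $\Gamma\cong\RR^\times$, and $\widehat{G}\setminus\Gamma=\zg^\perp/G=\widehat{G/Z}$.

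Next I would transport $\Gamma_j$ across $\psi$. By the previous paragraph the points of $\Gamma_j$ are precisely the maximal‑dimensional orbits, and by \cite[Cor.~2.9]{ArSoSp97} these are exactly the points of $\widehat{G}_j$ of finite upper multiplicity. Since finite upper multiplicity is preserved under Rieffel homeomorphisms, by \cite[Cor.~5.4]{ArSoSp97} (or \cite[Cor.~13(2)]{aHRaWi07}), we obtain $\psi(\Gamma_1)=\Gamma_2$ and therefore $\psi(\widehat{G_1/Z_1})=\widehat{G_2/Z_2}$. I expect this matching of the two ``trivial central character'' loci to be the main obstacle, because $\psi$ respects no algebraic structure a priori; the device that overcomes it is exactly the multiplicity‑theoretic description of the maximal‑orbit stratum together with the structural identification of its complement as $\widehat{G/Z}$.

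Finally I would conclude through the Rieffel correspondence. The quotient map $G_j\to G_j/Z_j$ realizes $C^*(G_j/Z_j)$ as $C^*(G_j)/J_j$, where $J_j\trianglelefteq C^*(G_j)$ is the ideal with $\widehat{J_j}=\Gamma_j$, i.e.\ the representations nontrivial on $Z_j$. The Morita equivalence of $C^*(G_1)$ and $C^*(G_2)$ induces a lattice isomorphism of ideals compatible with $\psi$, under which corresponding quotients are again Morita equivalent; as $\psi(\Gamma_1)=\Gamma_2$ pairs $J_1$ with $J_2$, the quotients $C^*(G_1/Z_1)$ and $C^*(G_2/Z_2)$ are Morita equivalent, completing the proof.
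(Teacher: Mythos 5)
Your proof is correct and follows essentially the same route as the paper's: identify the open dense stratum of maximal-dimensional coadjoint orbits as an ideal $J_j$ with $C^*(G_j)/J_j\cong C^*(G_j/Z_j)$, show this ideal is matched by the Rieffel correspondence, and conclude with \cite[Prop. 3.25]{RaWi98}. The only differences are cosmetic: you spell out in detail why the complement of that stratum is $\zg_j^\perp/G_j=\widehat{G_j/Z_j}$ (the paper asserts this directly from Lemma~\ref{27december2017-1555}), and you transport the ideal via finite upper multiplicity (\cite[Cor. 2.9 and Cor. 5.4]{ArSoSp97}) exactly as in Proposition~\ref{P4}, whereas the paper's proof of this lemma instead identifies $J_j$ as the largest bounded-trace ideal and invokes \cite[Cor. 9]{aHRaWi07}.
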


\begin{proof}
	One has $\ind G_2=\ind G_1=1$	
	by Remark~\ref{27december2017-1600}. 
	Therefore, by Lemma~\ref{27december2017-1555}, 
	one has $\dim Z_2=\dim Z_1=1$ 
	and there exists $\xi_j\in\gg_j^*$ with $\gg_j(\xi_j)=\zg_2$ and moreover the mapping $\RR^\times\to\gg_j^*/G_j$, $t\mapsto\Ad^*_{G_j}(G_j)(t\xi_j)$, is a homeomorphism of $\RR^\times$ onto an open dense subset $D_j$ of $\gg_j^*/G_j$. 
	The set $D_j$ is the set of all coadjoint orbits of $G_j$ having maximal dimension, whose union is 
	$\{\eta\in\gg_j^*\mid \zg\not\subseteq\Ker\eta\}$. 
	Let $\Jc_j\subseteq C^*(G_j)$ be the closed two-sided ideal with $\widehat{\Jc_j}=D_j$. 
	Then there is  a short exact sequence of $C^*$-algebras
	$$0\to\Jc_j\to C^*(G_j)\to C^*(G_j/Z_j)\to 0.$$
	Moreover, $\Jc_j$ is the largest bounded-trace ideal of $C^*(G_j)$ (see \cite[Sect.~2]{AKLSS01}).
	 
	It then follows by \cite[Cor. 9]{aHRaWi07} that for any fixed imprimitivity $C^*(G_1)$-$C^*(G_2)$-bimodule 
	its corresponding Rieffel correspondence carries $\Jc_1$ to $\Jc_2$. 
	Now, using \cite[Prop. 3.25]{RaWi98}, we obtain that the quotients 
	$C^*(G_1)/\Jc_1$ and $C^*(G_2)/\Jc_2$ are Morita equivalent. 
	Then, taking into account the above short exact sequences, 
	the $C^*$-algebras $C^*(G_1/Z_1)$ and $C^*(G_2/Z_2)$ are Morita equivalent. 
	This completes the proof. 
\end{proof}

\begin{proof}[Proof of Theorem~\ref{5D}] 
	Let $G_1$ be a nilpotent Lie group of dimension $\le 5$ and $G_2$ an exponential Lie group such that $C^*(G_1)$ is Morita equivalent to 
	$C^*(G_2)$. 
	We must prove that the Lie groups $G_1$ and $G_2$ are isomorphic. 
	
	First, Lemma~\ref{GCR} implies that $G_2$ must be nilpotent. 
	
	Then, if $G_1$ is of class~$\Tc$, 
	the assertion follows by Proposition~\ref{6D-MD}. 
	
	Now let us assume that $G_1$ is not of class~$\Tc$. 
	It follows by  Proposition~\ref{5D_classif} and the list in the proof of Proposition~\ref{6D-MD}
	that the only $5$-dimensional nilpotent Lie algebras which are not of class~$\Tc$ are $\ng_{5,3}$ and $\ng_{5,6}$. 
	Let us denote the center of $\gg_j$ by $\zg_j$ for $j=1,2$. 
	
	If either $\gg_1=\ng_{5,3}$ or  $\gg_1=\ng_{5,6}$, then $\ind G_1=1$, 
	hence by Lemma~\ref{30March2018} we obtain that $C^*(G_1/Z_1)$ is Morita equivalent to $C^*(G_2/Z_2)$ and $\dim Z_2=1$. 
	Here $G_1/Z_1$ is isomorphic either to $A_1\times H_3$ (if $\gg_1=\ng_{5,3}$)
	or to $N_4$ (if $\gg_1=\ng_{5,6}$). 
	Both Lie groups $A_1\times H_3$ and $N_4$ are 4-dimensional and are of class~$\Tc$, hence by Proposition~\ref{6D-MD} we obtain that $G_1/Z_1$ is isomorphic to $G_2/Z_2$. 
	In particular $\dim G_2=5$, and then $G_1$ is isomorphic to $G_2$ by 
	Proposition~\ref{P7}. 
	This completes the proof. 
\end{proof}

\section{Other examples }\label{other_ex}

So far we established that the class of stably $C^*$-rigid groups contains all nilpotent Lie groups of dimension $\le 5$ and direct products of Heisenberg groups with abelian Lie groups.
We show in this section that the above class also contains the filiform Lie groups and some $6$-dimensional nilpotent Lie groups.

	\subsection{Filiform Lie groups}
	Lie algebras of the filiform Lie groups are defined as follows:
	For $n:=\dim\gg\ge 3$,  the nilpotent Lie algebra $\fg_n$ has a basis $X_1,\dots,X_n$ with the commutation relations 
	$$[X_n,X_j]=X_{j-1} \quad \text{for} \quad j=1,\dots,n-1, $$
	where $X_0:=0$, 
	and $[X_k,X_j]=0$ if $1\le j\le k\le n-1$.

	\begin{proposition}\label{rigid_fili}
		For any $m,n\ge 3$,  $\widehat{F_n}$ and $\widehat{F_m}$ are homeomorphic if and only if $n=m$.  
			\end{proposition}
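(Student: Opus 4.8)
The direction ``$\Leftarrow$'' is immediate, since $n=m$ gives $F_n\simeq F_m$ and hence a homeomorphism $\widehat{F_n}\simeq\widehat{F_m}$. For the converse, the plan is to isolate a homeomorphism invariant that separates the $F_n$ for distinct $n$. The real rank is of no use here: since $[\fg_n,\fg_n]$ is spanned by $X_1,\dots,X_{n-2}$, one has $\dim[\fg_n,\fg_n]^\perp=2$ for every $n\ge 3$, so by \eqref{RR_eq} the real rank of $C^*(F_n)$ equals $2$ independently of $n$. The invariant I would use instead is the index $\ind F_n$, which is preserved under homeomorphisms of the unitary dual by Remark~\ref{27december2017-1600}.

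The heart of the proof is therefore the computation $\ind F_n=n-2$. By Lemma~\ref{27december2017-1555}(iii) it suffices to show $\min\{\dim\fg_n(\xi)\mid\xi\in\fg_n^*\}=n-2$. The structural observation driving everything is that every nonzero bracket of $\fg_n$ involves $X_n$, namely $[X_n,X_j]=X_{j-1}$ for $2\le j\le n-1$ (with $[X_n,X_1]=X_0=0$). Consequently, for each $\xi\in\fg_n^*$ the antisymmetric Kirillov form $B_\xi(Y,Z):=\langle\xi,[Y,Z]\rangle$ has nonzero entries only in the row and column indexed by $X_n$, so $\rank B_\xi\le 2$ and hence $\dim\Oc_\xi=\rank B_\xi\le 2$ for all $\xi$. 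Since $\dim\fg_n(\xi)=\dim\fg_n-\dim\Oc_\xi=n-\dim\Oc_\xi$, this already yields the lower bound $\dim\fg_n(\xi)\ge n-2$.

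To see that the bound is attained I would write $Y=\sum_i y_iX_i$ and solve the radical equations for $B_\xi$: the condition $Y\in\fg_n(\xi)$ reduces to $y_n\,\xi_{k-1}=0$ for $2\le k\le n-1$ together with $\sum_{j=2}^{n-1}y_j\,\xi_{j-1}=0$. Thus as soon as some $\xi_i$ with $i\le n-2$ is nonzero, i.e. $\xi\notin[\fg_n,\fg_n]^\perp$, one forces $y_n=0$ and is left with a single nontrivial linear constraint, so $\dim\fg_n(\xi)=n-2$. Hence $\ind F_n=n-2$, and a homeomorphism $\widehat{F_n}\simeq\widehat{F_m}$ forces $n-2=\ind F_n=\ind F_m=m-2$, that is $n=m$. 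The only point requiring genuine care is this last rank computation, namely verifying that the genericity locus $\{\xi\notin[\fg_n,\fg_n]^\perp\}$ is nonempty and really produces orbits of the maximal dimension~$2$; the displayed linear system confirms exactly this, and the rest of the argument is then purely formal.
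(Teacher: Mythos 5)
Your proof is correct and follows exactly the paper's route: the paper's entire argument is the identity $\ind F_n=n-2$ combined with the fact that $\ind$ is a homeomorphism invariant of the unitary dual (Remark~\ref{27december2017-1600}). You have merely supplied the details of the computation $\min\{\dim\fg_n(\xi)\mid\xi\in\fg_n^*\}=n-2$, which the paper asserts without proof, and your rank argument for the Kirillov form is accurate.
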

	
	\begin{proof}
		For every $n\ge 3$ one has $\ind F_n=n-2$, and then the assertion follows by Remark~\ref{27december2017-1600}. 
	\end{proof}
	
	\begin{remark}
		\normalfont
		The stronger hypothesis that $C^*(F_m)$ is Morita equivalent to
	$C^*(F_n)$ implies that $m=n$, by \cite[Thms.~4.1 and 4.8]{AKLSS01}, \cite[Prop.~2.2]{ArSp96} and 
\cite[Thm.~10]{aHRaWi07}.
		\end{remark}
	
	We introduce here the $6$-dimensional free 2-step nilpotent Lie algebra, denoted 
	$\ng_{6,15}$ in \cite{Pe88},  that is, the Lie algebra defined by a basis $X_1,X_2,X_3,X_4,X_5,X_6$ satisfying the commutation relations $$[X_6,X_5]=X_3,\ [X_6,X_4]=X_1,\ [X_5,X_4]=X_2.$$
This will be needed in the proof of the next theorem, and also treated in Subsection~\ref{free}.

\begin{theorem}\label{fili_th}
	The filiform Lie group $F_n$ is stably $C^*$-rigid for every $n\ge 3$.
\end{theorem}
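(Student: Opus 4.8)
The backward implication is immediate: isomorphic groups have isomorphic, hence Morita equivalent, $C^*$-algebras. So the plan is to fix an exponential Lie group $H$ with $C^*(F_n)$ Morita equivalent to $C^*(H)$ and to prove $H\cong F_n$. First I would observe that the points of $\widehat{F_n}$ are closed and that Morita equivalence yields a homeomorphism $\widehat{F_n}\simeq\widehat{H}$, so the points of $\widehat{H}$ are closed as well; Lemma~\ref{GCR} then forces $H$ to be nilpotent. Next I would check by a direct computation of the bracket form that $\fg_n$ is of class~$\Tc$: for $\xi\notin[\fg_n,\fg_n]^\perp$ the only nonzero entries of the antisymmetric matrix $(\langle\xi,[X_i,X_j]\rangle)_{i,j}$ come from $\ad X_n$, so its rank is $2$; hence every nontrivial coadjoint orbit is $2$-dimensional, $\ind F_n=n-2$, and $\dim[\fg_n,\fg_n]^\perp=2$.

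With this in hand, Proposition~\ref{P4} shows that $H$ is again of class~$\Tc$ with $\dim[\hg,\hg]^\perp=2$, while Remark~\ref{27december2017-1600} gives $\ind H=n-2$. The second step is a dimension count. The maximal orbit dimension $\dim\hg-\ind H$ is nonnegative, so $\dim\hg\ge n-2$; Lemma~\ref{MD2} gives $\dim\hg\le\ind H+\dim[\hg,\hg]^\perp=n$; and Lemma~\ref{29March2018} forces $\dim\hg\equiv n\pmod 2$. Hence $\dim\hg\in\{n-2,n\}$. The value $n-2$ would make the maximal orbit dimension $0$, i.e. $H$ abelian, so that $\widehat H$ would be Hausdorff; but $F_n$ is non-commutative, so by Lemma~\ref{quot} its characters are non-isolated and $\widehat{F_n}$ is not Hausdorff, a contradiction. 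Therefore $\dim\hg=n$, and since $\dim(\hg/[\hg,\hg])=\dim[\hg,\hg]^\perp=2$ the algebra $\hg$ is $2$-generated with $\dim[\hg,\hg]=n-2$. For $n=3$ this already finishes the argument, since $\fg_3=\hg_3$ is Heisenberg and Theorem~\ref{2H} applies; for $n=4$ the only $2$-generated class-$\Tc$ algebra of dimension $4$ is $\fg_4$, so we are again done.

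It remains to identify $\hg$ with $\fg_n$ for $n\ge 5$, and this is the heart of the matter. Being $2$-generated, class~$\Tc$, of dimension $n$ and with $\dim[\hg,\hg]=n-2$ does \emph{not} by itself force $\hg\cong\fg_n$: already for $n=5$ the algebra $\ng_{5,4}$ shares all of these invariants (including real rank and index) with $\fg_5=\ng_{5,5}$, yet is of smaller nilpotency class. The two must be separated by finer, Morita-invariant data, which is exactly what the limit-point analysis at the end of the proof of Proposition~\ref{P7} achieves for $n=5$, where $\widehat{F_5}$ carries a properly convergent sequence with exactly three limit points that $\widehat{N_{5,4}}$ cannot have. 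The plan for general $n$ is therefore to use the explicit coadjoint-orbit stratification of $\widehat{F_n}$ (available from \cite{Lu90}, \cite{AKLSS01}, \cite{Fe62}) to extract the maximal number of limit points of a properly convergent sequence, a quantity preserved under Morita equivalence by the multiplicity results of \cite{ArSoSp97} and \cite{aHRaWi07}, and to show that for $F_n$ this number detects the maximal nilpotency class $n-1$, thereby pinning down $\hg\cong\fg_n$.

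The main obstacle is precisely this last separation: one must rule out every non-filiform $2$-generated class-$\Tc$ algebra of dimension $n$ sharing the coarse invariants of $\fg_n$, using only the topology of the dual. The delicate dimension-six instance is where the free two-step algebra $\ng_{6,15}$, analyzed in Subsection~\ref{free}, enters the comparison; I expect the general case to require a uniform statement to the effect that, among $2$-generated class-$\Tc$ algebras of a given dimension, the maximal limit-point multiplicity is largest exactly for the filiform one.
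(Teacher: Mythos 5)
Your reduction is sound as far as it goes, and it tracks the paper's strategy closely at the start: Lemma~\ref{GCR} to force nilpotency, Proposition~\ref{P4} to transfer class~$\Tc$ and $\dim[\hg,\hg]^\perp=2$, and then a dimension count. (The paper reaches the conclusion that all nontrivial coadjoint orbits of $\hg$ are $2$-dimensional slightly differently, via Lemma~\ref{fili_L} applied to the condition $\RRa(C^*(G))\le 3$, rather than via your bracketing $\dim\hg\in\{n-2,n\}$ with Lemma~\ref{29March2018}; both routes are fine.) But the proof is not complete: you stop exactly at the step that does all the work, namely identifying an $n$-dimensional nilpotent algebra $\hg$ of class~$\Tc$ with $\dim[\hg,\hg]=n-2$ and all nontrivial orbits $2$-dimensional as $\fg_n$. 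You correctly observe that the coarse invariants do not suffice (witness $\ng_{5,4}$ versus $\fg_5$), but your proposed remedy --- extracting a ``maximal limit-point multiplicity'' of properly convergent sequences in $\widehat{F_n}$ and proving a uniform statement that this quantity is largest exactly for the filiform algebra --- is only announced, not carried out, and there is no evidence in the paper or its references that such a uniform statement holds or is tractable for general $n$.

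The paper closes this gap with a structural input you do not invoke: the classification in \cite{ACL95} of nilpotent Lie algebras all of whose coadjoint orbits have dimension at most two. That theorem says $\hg$ either has a codimension-one abelian ideal, or is $\ag_k\times\ng_{6,15}$, or is $\ag_k\times\ng_{5,4}$. The last two cases are eliminated by the real-rank count $\RRa(C^*(G))=2$ (giving $k=-1$, impossible, resp.\ $k=0$) together with Theorem~\ref{5D} to rule out $N_{5,4}$. In the remaining case one picks $X\in\gg\setminus\ag$ and notes that $D:=(\ad_{\gg}X)\vert_{\ag}$ is nilpotent with range of codimension one in $\ag$ (because $\dim[\gg,\gg]=\dim\gg-2$), so its Jordan form is a single cell and $\gg\cong\fg_m$; Proposition~\ref{rigid_fili} (i.e.\ $\ind F_m=m-2$) then forces $m=n$. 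Without the \cite{ACL95} classification, or a replacement for it, your argument cannot exclude non-filiform $2$-generated class-$\Tc$ algebras in dimensions $\ge 5$, so as written the proposal has a genuine gap at its final and essential step.
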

We first prove the following lemma. 
\begin{lemma}\label{fili_L}
	If $\gg$ is a nilpotent Lie algebra of class~$\Tc$ with $\RRa(C^*(G))\le
	3$, then $\ind\gg=\dim\gg-2$.
\end{lemma}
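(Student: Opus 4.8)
The plan is to recast the desired equality $\ind\gg=\dim\gg-2$ as a statement about the maximal dimension of a coadjoint orbit. For any nilpotent Lie algebra, Lemma~\ref{27december2017-1555} gives $\ind\gg=\min\{\dim\gg(\eta)\mid\eta\in\gg^*\}$, and since $\dim\Oc_\xi=\dim\gg-\dim\gg(\xi)$, the largest coadjoint orbits have dimension
\[
d:=\max_{\xi\in\gg^*}\dim\Oc_\xi=\dim\gg-\ind\gg .
\]
Hence $\ind\gg=\dim\gg-2$ is equivalent to $d=2$, i.e.\ to the maximal coadjoint orbits being two\-dimensional, and this is what I would establish.

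First I would bound $d$ from above, and this is the only place the class~$\Tc$ hypothesis enters. By Lemma~\ref{MD2} one has $\dim\gg\le\ind\gg+\dim[\gg,\gg]^\perp$, while the real-rank formula~\eqref{RR_eq} gives $\dim[\gg,\gg]^\perp=\RRa(C^*(G))\le 3$ by hypothesis. Combining these,
\[
\dim\gg\le\ind\gg+\dim[\gg,\gg]^\perp=\ind\gg+\RRa(C^*(G))\le\ind\gg+3 ,
\]
so that $d=\dim\gg-\ind\gg\le 3$.

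It then remains to pin down $d$ exactly, for which I would invoke the parity of coadjoint-orbit dimensions: every coadjoint orbit carries the Kirillov--Kostant--Souriau symplectic form, hence is even\-dimensional, so $d$ is even. Together with $0\le d\le 3$ this forces $d\in\{0,2\}$; and $d=0$ means every coadjoint orbit is a point, i.e.\ $\gg$ is abelian. Assuming $\gg$ nonabelian (which is the relevant case, the intended application being the filiform algebras) we therefore get $d=2$, that is $\ind\gg=\dim\gg-2$.

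The argument is short and presents no serious obstacle once Lemma~\ref{MD2} is available; the only point requiring care is the degenerate abelian case, where the asserted equality genuinely fails (there $\ind\gg=\dim\gg$) and must be excluded from the statement.
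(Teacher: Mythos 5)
Your proof is correct and is essentially the paper's own argument in a slightly different packaging: the paper applies Lemma~\ref{MD1} directly to produce $\xi\notin[\gg,\gg]^\perp$ with $[\gg,\gg]\subseteq\gg(\xi)$ and then squeezes the even integer $\dim(\gg/\gg(\xi))=\dim\gg-\ind\gg$ between $2$ and $\RRa(C^*(G))\le 3$, while you route the same upper bound through Lemma~\ref{MD2} (whose proof is exactly that application of Lemma~\ref{MD1}) and the same parity argument. Your remark about the abelian degenerate case is apt --- the paper's proof also tacitly assumes $[\gg,\gg]\ne\{0\}$, since Lemma~\ref{MD1} and the inequality $2\le\dim(\gg/[\gg,\gg])$ require it --- but this does not affect the intended application.
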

\begin{proof}
	Since $\gg$ is class~$\Tc$, it follows by Lemma~\ref{MD1} that there
	exists $\xi\in\gg^*$ with
	$$[\gg,\gg]\subseteq\gg(\xi)\subsetneqq\gg.$$
	Here $2\le \dim(\gg/[\gg,\gg])=\RRa(C^*(G))\le 3$ by hypothesis,
	hence $2\le \dim(\gg/\gg(\xi))\le 3$.
	On the other hand $\dim(\gg/\gg(\xi))$ is an even integer,
	hence $\dim(\gg/\gg(\xi))=2$.
	Since $\gg$ is class~$\Tc$, we then obtain $\ind\gg=\dim\gg(\xi)=\dim\gg-2$.
\end{proof}
\begin{proof}[Proof of Theorem~\ref{fili_th}]
	We must prove that if $G$ is an exponential Lie group for which
	$C^*(G)$ is Morita equivalent to $C^*(F_n)$,
	then $G$ is isomorphic to the  Lie group~
	$F_n$.
	\par
	It follows by Lemma~\ref{GCR} that $G$ is a nilpotent Lie group.
	Moreover,
	$\RRa(C^*(G))=\RRa(C^*(F_n))=2$  
	and $G$ is class~$\Tc$ by
	Proposition~\ref{P4}.
	Therefore we may use Lemma~\ref{fili_L} to obtain
	$\ind\gg=\dim\gg-2$, that is, all the non-trivial coadjoint orbits of $\gg$ have
	dimension~$ 2$.
	Since $\gg$ is nilpotent, it then follows by \cite[Thm.]{ACL95} that
	one of the following cases may occur:
	\par
	Case 1: There exists a hyperplane abelian ideal of~$\gg$.
	\par
	Case 2: There exists an integer $k\ge 0$ with $\gg=\ag_k\times\ng_{6,15}$.
	\par
	Case 3: There exists an integer $k\ge 0$ with $\gg=\ag_k\times\ng_{5,4}$. \\
	In Case 2 one has $2=\RRa(C^*(G))=\RRa(C^*(A_k\times N_{6,15}))
	=k+3$, hence $k=-1$, which is impossible.
	In Case 3, one has  similarly that $2= \RRa(C^*(G))=\RRa(C^*(A_k\times N_{5,4}))
	=k+2$, hence $k=0$.
	That is, $\gg=\ng_{5,4}$, but this is impossible since
	Theorem~\ref{5D} shows that $C^*(N_{5,4})$ is not Morita equivalent to
	$C^*(F_n)$.
	\par
	It follows by the above discussion that only Case 1 can occur, hence
	there exists
	an abelian ideal $\ag\trianglelefteq\gg$ with $\dim(\gg/\ag)=1$.
	Let us denote $m:=\dim\gg$ and select any $X\in\gg\setminus\ag$.
	Then
	$[\gg,\gg]=[X,\ag]$, since $[\ag,\ag]=\{0\}$.
	Since $\dim[\gg,\gg]=\dim\gg-\RRa(C^*(G))=m-2$, it then follows that
	the operator $D:=(\ad_{\gg} X)\vert_{\ag}\colon\ag\to\ag$ is
	nilpotent and its range has codimension~$1$ in $\ag$.
	This implies that the Jordan decomposition of $D$ consists of exactly
	one Jordan cell, which further implies that~$\gg$ is isomorphic to the
	filiform Lie algebra~$\fg_m$.
	Since $C^*(G)$ and $C^*(F_n)$ are Morita equivalent, it then follows
	by Proposition~\ref{rigid_fili} that $m=n$, hence $G$ is isomorphic
	to~$F_n$, and this completes the proof.
\end{proof}

	\subsection{The groups $H_{m,n}$}
		For any $m,n\ge1$,  $H_{m,n}$ are the nilpotent Lie groups with Lie algebras $\hg_{m,n}$  
	with a basis $\{X_1,\dots,X_m\}\cup\{Y_0,\dots,Y_n\}$ and the bracket given by  
	$$[X_i,Y_j]=Y_{i+j}$$
	for all $i\in\{1,\dots,m\}$ and $j\in\{0,\dots,n\}$ with $i+j\le n$. 
	   We recall that $\dim H_{m,n}=m+n+1$ for all $m\ge n\ge 1$, and all the coadjoint orbits of $H_{m, n}$ are flat.
	   (See \cite[Subsect.~6.2]{BBL17} and the references therein.)
	We also define $H_{m,0}:=A_{m+1}$, the $(m+1)$-dimensional abelian Lie group.

	For any $C^*$-algebra $\Ac$ we will denote by $\Ic(\Ac)$ the set of all closed 2-sided ideals of~$\Ac$.

		\begin{proposition}\label{rigid_flat}
		If $m_1\ge n_1\ge 1$ and $m_2\ge n_2\ge 1$, then $C^*(H_{m_1,n_1})$ is Morita equivalent to $C^*(H_{m_2,n_2})$ if and only if $m_1=m_2$ and $n_1=n_2$. 
	\end{proposition}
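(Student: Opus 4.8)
The plan is to extract two numerical invariants from $C^*(H_{m,n})$ that are preserved under Morita equivalence and that together recover the pair $(m,n)$. The nontrivial implication then follows at once, while the converse is trivial since isomorphic groups have isomorphic, hence Morita equivalent, $C^*$-algebras.

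First I would record two structural facts about the Lie algebra $\hg_{m,n}$. Since the only nonzero brackets are $[X_i,Y_j]=Y_{i+j}$ and each $Y_k$ with $1\le k\le n$ arises as $[X_k,Y_0]$ (note $k\le n\le m$), one has $[\hg_{m,n},\hg_{m,n}]=\spa\{Y_1,\dots,Y_n\}$, so $\dim[\hg_{m,n},\hg_{m,n}]^\perp=(m+n+1)-n=m+1$. Next, a direct inspection of the brackets shows that the center is $\zg=\spa\{X_{n+1},\dots,X_m,Y_n\}$: each $Y_j$ with $j<n$ fails to be central because $[X_1,Y_j]=Y_{j+1}\ne 0$, each $X_i$ with $i\le n$ fails because $[X_i,Y_0]=Y_i\ne 0$, while $Y_n$ and the $X_i$ with $i>n$ are annihilated by all brackets. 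Hence $\dim\zg=m-n+1$.

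The first invariant is the real rank. Because all coadjoint orbits of $H_{m,n}$ are flat, Proposition~\ref{P3} shows that $H_{m,n}$ is type~$\RRa_1$, hence type~$\RRa$ by Lemma~\ref{L2}; combined with formula~\eqref{RR_eq} this gives $\RRa(C^*(H_{m,n}))=\dim[\hg_{m,n},\hg_{m,n}]^\perp=m+1$, a quantity preserved under Morita equivalence. The second invariant is $\ind H_{m,n}$, which is preserved by any homeomorphism of unitary duals (Remark~\ref{27december2017-1600}), hence by Morita equivalence. To evaluate it via the characterization $\ind G=\min_{\xi\in\gg^*}\dim\gg(\xi)$ of Lemma~\ref{27december2017-1555}, I would first note that $\zg\subseteq\gg(\xi)$ for every $\xi$, so $\dim\gg(\xi)\ge m-n+1$ always. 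Then, writing $W=\sum a_iX_i+\sum b_jY_j$, I would exhibit a single $\xi$ with $\langle\xi,Y_n\rangle\ne 0$ for which the linear conditions defining $\gg(\xi)$ force $a_1=\cdots=a_n=0$ and $b_0=\cdots=b_{n-1}=0$ by a triangular elimination that starts from the top index $n$ and repeatedly divides by $\langle\xi,Y_n\rangle$; this gives $\gg(\xi)=\zg$ and therefore $\ind H_{m,n}=m-n+1$.

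Assembling these, if $C^*(H_{m_1,n_1})$ is Morita equivalent to $C^*(H_{m_2,n_2})$, then $m_1+1=m_2+1$ and $m_1-n_1+1=m_2-n_2+1$, which force $m_1=m_2$ and $n_1=n_2$. I expect the only delicate point to be the index computation: one must check that the coadjoint stabilizer at a suitable point collapses \emph{exactly} to the center, which is precisely the triangular elimination sketched above; the remaining arguments are bookkeeping on the two invariants together with citations of the invariance results already established.
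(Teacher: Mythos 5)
Your proof is correct, but it follows a genuinely different route from the paper's. The paper argues structurally: it identifies the largest bounded-trace ideal $J_k\subseteq C^*(H_{m_k,n_k})$ with the open set of maximal-dimensional coadjoint orbits, uses the short exact sequence $0\to J_k\to C^*(H_{m_k,n_k})\to C^*(H_{m_k,n_k-1})\to 0$, invokes the fact that the Rieffel correspondence carries the largest bounded-trace ideal to the largest bounded-trace ideal (so Morita equivalence descends to the quotients), and iterates until one side becomes abelian, where Lemma~\ref{abelian} finishes the argument. You instead extract two numerical Morita invariants, $\dim[\hg_{m,n},\hg_{m,n}]^\perp=m+1$ (via flatness of the orbits, Proposition~\ref{P3}, Lemma~\ref{L2} and \eqref{RR_eq}) and $\ind H_{m,n}=m-n+1$ (via Lemma~\ref{27december2017-1555} and Remark~\ref{27december2017-1600}), and read off $(m,n)$. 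Your computations check out: $[\hg_{m,n},\hg_{m,n}]=\spa\{Y_1,\dots,Y_n\}$, $\zg=\spa\{X_{n+1},\dots,X_m,Y_n\}$, and for $\xi=Y_n^*$ the stabilizer conditions $\langle\xi,[W,Y_{n-k}]\rangle=0$ and $\langle\xi,[W,X_k]\rangle=0$ do collapse $\gg(\xi)$ exactly to $\zg$ (with $\xi=Y_n^*$ the elimination is in fact immediate, no division needed), while $\zg\subseteq\gg(\eta)$ for every $\eta$ gives the matching lower bound. Your route is shorter given the machinery already in the paper, and it actually yields a formally stronger conclusion: since both invariants depend only on the homeomorphism class of the unitary dual (Lemma~\ref{L2} applies because $H_{m,n}$ is type~$\RRa_1$), a mere homeomorphism $\widehat{H_{m_1,n_1}}\simeq\widehat{H_{m_2,n_2}}$ already forces $(m_1,n_1)=(m_2,n_2)$, in the spirit of Theorem~\ref{2H}. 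What the paper's iterative argument buys in exchange is finer structural information — the entire filtration by bounded-trace ideals is matched under the Rieffel correspondence — which is the same technique reused in Lemma~\ref{30March2018}.
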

	
	\begin{proof}
		Denote $\Ac_k:=C^*(H_{m_k,n_k})$ for $k=1,2$. 
		Assume that $\Ac_1$ is Morita equivalent to $\Ac_2$, and let $X$ be an $\Ac_1-\Ac_2$-imprimitivity bimodule with  Rieffel correspondence $X$-$\Ind\colon \Ic(\Ac_2)\to \Ic(\Ac_1)$. 
		If $J_k\in \Ic(\Ac_k)$ is the largest bounded-trace ideal of $\Ac_k$ for $k=1,2$ then, 
		using \cite[Thm. 2.8 and Cor. 2.9]{ArSoSp97}, 
		one obtains the canonical homeomorphism $\widehat{J_k}\simeq\Gamma_k$, 
		where $\Gamma_k\subseteq\widehat{\Ac_k}$ 
		is the open subset corresponding to the coadjoint orbits of $H_{m_k,n_k}$ having maximal dimension.  
		The short exact sequence \cite[Eq. (6.3)]{BBL17} then takes on the form 
		$$0\to J_k\to \Ac_k\to C^*(H_{m_k,n_k-1})\to 0.$$
		On the other hand, since $J_k$ is the largest bounded-trace ideal of $\Ac_k$,  one has 
		$X$-$\Ind(J_2)=J_1$ by \cite[Cor. 9]{aHRaWi07}, 
		and it further follows by \cite[Prop. 3.25]{RaWi98} that $\Ac_1/J_1$ is Morita equivalent to $\Ac_2/J_2$ for $k=1,2$. 
		The above short exact sequence then shows that 
		$C^*(H_{m_1,n_1-1})$ is Morita equivalent to $C^*(H_{m_2,n_2-1})$. 
		
		Now let us assume that $n_1\le n_2$. 
		Iterating the above reasoning, we obtain that 
		$C^*(H_{m_1,0})$ is Morita equivalent to $C^*(H_{m_2,n_2-n_1})$. 
		Since $H_{m_1,0}$ is the abelian $(m_1+1)$-dimensional Lie group, 
		it then follows by 
		Lemma~\ref{abelian} 
		that $H_{m_2,n_2-n_1}$ is abelian and in fact is isomorphic to $H_{m_1,0}$, and then $n_2-n_1=0$ and $m_2=m_1$, 
		which concludes the proof.
	\end{proof}

\subsection{The 6-dimensional free 2-step nilpotent Lie group}\label{free}
We now turn our attention towards the  $6$-dimensional free 2-step nilpotent Lie algebra defined just before Theorem~\ref{fili_th}.

\begin{theorem}\label{N6-15}
The nilpotent Lie group $N_{6,15}$ is stably $C^*$-rigid. 
\end{theorem}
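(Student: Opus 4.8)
The plan is to run the same reduction as in the proofs of Theorems~\ref{5D} and~\ref{fili_th} in order to pin the mystery group down to an explicit finite list of candidate Lie algebras, and then to separate $\ng_{6,15}$ from the remaining candidates by a Morita‑invariant topological feature of the unitary dual. Throughout write $G:=N_{6,15}$ and $\gg:=\ng_{6,15}$. Since $\gg$ is $2$‑step with $[\gg,\gg]=\zg$ of dimension $3$, all its nontrivial coadjoint orbits are flat of dimension $2$ (Example~\ref{Ex5.5}\eqref{Ex5.5_item2}), so $\gg$ is of class~$\Tc$ with $\RRa(C^*(G))=\dim[\gg,\gg]^\perp=3$ and $\ind G=\dim\gg-2=4$.

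Let $H$ be an exponential Lie group with $C^*(H)$ Morita equivalent to $C^*(G)$. By Lemma~\ref{GCR}, $H$ is a nilpotent Lie group; by Proposition~\ref{P4}, $H$ is of class~$\Tc$ with $\RRa(C^*(H))=3$; and by Remark~\ref{27december2017-1600}, $\ind H=4$. Lemma~\ref{MD2} then gives $\dim\hg\le\ind H+\dim[\hg,\hg]^\perp=7$, while Lemma~\ref{29March2018} forces $\dim\hg$ even; ruling out $\dim\hg=4$ (there $\ind H=\dim\hg$ would make $H$ abelian, contradicting $\RRa=3$) and $\dim\hg\le 2$ (where $\RRa\le\dim\hg<3$) leaves $\dim\hg=6$, so every nontrivial coadjoint orbit of $H$ has dimension $6-4=2$. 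Next I would invoke the classification of nilpotent Lie algebras all of whose nontrivial coadjoint orbits are $2$‑dimensional \cite[Thm.]{ACL95}, which leaves three structural types: a hyperplane abelian ideal ($\hg\cong\gg_D$ as in Example~\ref{Ex5.5}\eqref{Ex5.5_item1}), $\hg\cong\ag_k\times\ng_{6,15}$, and $\hg\cong\ag_k\times\ng_{5,4}$. Imposing $\RRa(C^*(H))=3$ yields a short explicit list: in the second type $k=0$, giving $\hg\cong\ng_{6,15}$; in the third type $k=1$, giving $\hg\cong\ag_1\times\ng_{5,4}$; and in the first type $\dim[\hg,\hg]=\rank D=3$, which forces $D^2\ne0$ and, up to the Jordan type of $D$, gives $\hg\cong\ag_1\times\fg_5$ or the indecomposable algebra $\gg_D$ with $D=J_3\oplus J_2$. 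Among these candidates $\ng_{6,15}$ is the unique $2$‑step one; each of the others is at least $3$‑step, so by Lemma~\ref{9feb2018}\eqref{9feb2018_item2} its generic (maximal‑dimensional) coadjoint orbits fail to be flat.

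The heart of the matter, and the step I expect to be the main obstacle, is to separate the $2$‑step $\ng_{6,15}$ from these higher‑step candidates, all of which share the invariants $(\dim,\RRa,\ind)=(6,3,4)$, so that neither the real rank, nor the index, nor the partially ordered set $Q(\widehat H)$ exploited in Propositions~\ref{P6}--\ref{P7} can distinguish them. The invariant I would use instead is the Hausdorffness of the spectrum of the largest bounded‑trace ideal $\Jc\subseteq C^*(H)$, that is, of the generic stratum of maximal‑dimensional orbits. This stratum is a Morita invariant: for any imprimitivity bimodule the associated Rieffel correspondence carries the largest bounded‑trace ideal of $C^*(G)$ onto that of $C^*(H)$ \cite[Cor.~9]{aHRaWi07}, so the two generic strata are homeomorphic. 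For $\ng_{6,15}$ every maximal orbit is flat and hence a separated point of $\gg^*/G$ by \cite[Thm.~2.2]{ArSoKaSc99}, so the generic stratum of $G$ is Hausdorff.

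It therefore suffices to prove the complementary fact that each of the three higher‑step candidates has a \emph{non‑Hausdorff} generic stratum, converting the non‑flatness of their maximal orbits into a genuine failure of the Hausdorff property. I would establish this by exhibiting, for each candidate, a sequence of maximal‑dimensional orbits admitting two distinct limit points inside the generic stratum, using the explicit orbital pictures of $\fg_5$ and $\ng_{5,4}$ recorded in Proposition~\ref{P7} and in \cite{Pe88} for the two decomposable candidates, and a direct computation for $\gg_D$ with $D=J_3\oplus J_2$. Should a clean Hausdorffness dichotomy prove awkward to state uniformly, I would fall back to comparing the admissible cardinalities of limit sets of properly convergent sequences, exactly as in the $F_5$‑versus‑$N_{5,4}$ argument inside the proof of Proposition~\ref{P7}, showing that these cardinalities for $\ng_{6,15}$ differ from those of each remaining candidate. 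Either way, the candidate algebra must then coincide with $\ng_{6,15}$, so $H\simeq N_{6,15}$.
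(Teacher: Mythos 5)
Your proposal follows essentially the same route as the paper: the same Morita invariants ($\ind=4$, $\RRa(C^*(\cdot))=3$, class~$\Tc$, even dimension $\le 7$) reduce the unknown algebra to the same four candidates ($\ng_{6,15}$, $\ag_1\times\ng_{5,4}$, $\ag_1\times\ng_{5,5}=\ag_1\times\fg_5$, and your $\gg_D$ with $D=J_3\oplus J_2$, which is exactly $\ng_{6,18}$ --- the paper reaches this list via the decomposable/indecomposable split and Pedersen's tables rather than \cite{ACL95}), and the same Morita-invariant feature, Hausdorffness of the generic stratum (equivalently, of the complement of the characters, which Proposition~\ref{P4} carries onto itself), finishes the argument. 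For the one step you flag as the main obstacle, the paper replaces your case-by-case orbit computations by a cleaner device: each rival candidate admits a quotient isomorphic to the filiform group $N_4=F_4$, whose dual minus the characters is non-Hausdorff by Dixmier, and the dual of a quotient embeds as a closed subset sending characters to characters, so the non-Hausdorffness lifts.
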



\begin{proof}
	Let $G$ be an exponential Lie group for which $C^*(G)$ is Morita equivalent to $C^*(N_{6,15})$. 
	We must prove that $G$ is isomorphic to $N_{6,15}$.
	
	It is well known that the coadjoint orbits of $N_{6,15}$ have dimensions $\le 2$ and then we directly obtain 
	$\ind N_{6,15}=4$. 
	Thus $\ind G=\ind N_{6,15}=4$	
	by Remark~\ref{27december2017-1600}. 
	Also, since $N_{6,15}$ is class~$\Tc$, 
	it follows by 
		Proposition~\ref{P4}
	that $G$ is class~$\Tc$ and 
	 $$\dim [\gg, \gg]^\perp=\dim[\ng_{6,15},\ng_{6,15}]^\perp=3. $$ 
 On the other hand, 
	since $G$ is class~$\Tc$, 
	we obtain $\dim\gg\le 4+3=7$ by Lemma~\ref{MD2}. 
	Now, by Lemma~\ref{29March2018}, it follows that $\dim\gg\in\{2,4,6\}$. 
	We discuss these cases separately below. 
	
	$\bullet$ Case $\dim\gg=2$. 
	Then $\gg$ is abelian, hence 
	$\dim [\gg, \gg]^\perp=2$, which is a contradiction with the equality 
	$\dim [\gg, \gg]^\perp=3$ established above. 
	
	$\bullet$ Case $\dim\gg=4$. 
	If $\gg$ is abelian, then $\dim[\gg, \gg]^\perp=4$, which is a contradiction as above. 
	If $\gg$ is not abelian, then $\gg=\hg_3\times\ag_1$ and then, by Theorem~\ref{2H}, we obtain that the group $N_{6,15}$ is isomorphic to $H_3\times A_1$, which is again a contradiction. 
	
	$\bullet$ Case $\dim\gg=6$. 
	There are two possible subcases. 
	
	Subcase 1: There exist Lie algebras $\gg_1$ and $\gg_2$ 
	with $\gg=\gg_1\times\gg_2$ and $\dim\gg_j\ge 1$ for $j=1,2$. 
	We may assume $\dim\gg_1\le\dim\gg_2$ without loss of generality. 
	Since $\dim\gg_1+\dim\gg_2=\dim\gg=6$, we may have either $\dim\gg_1=1$, or $\dim\gg_1=2$, or $\dim\gg_1=3$. 
	
	If neither $\gg_1$ nor $\gg_2$ is abelian, then $\gg$ is not of class $\Tc$, which is a contradiction with what we already established above. 

	If one of the Lie algebras $\gg_1$ and $\gg_2$ is abelian, 
	then we have either $\dim\gg_1\le 2$, or $\dim\gg_1=3$ and $\gg_1$ is abelian, 
	hence $\gg_1=\ag_k$ with $k\in\{1,2,3\}$. 
	Then 
	$$
	\begin{aligned}
	3= & \dim[\gg, \gg]^\perp=\RRa(C^*(G))=
	\RRa(C^*(A_k\times G_2))=k+\RRa(C^*(G_2))\\
	= & k+\dim[\gg_2,\gg_2]^\perp,
	\end{aligned}
	$$
	hence $\dim[\gg_2,\gg_2]^\perp=3-k$. 
	Since $\gg_2$ is nilpotent, it then follows that $k\le 1$, hence $k=1$, 
	and then $\RRa(C^*(G_2))=2$ and $\gg=\ag_1\times\gg_2$. 
	On the other hand, 
	$4=\ind \gg=\ind(\ag_1\times\gg_2)=1+\ind\gg_2$, hence $\ind\gg_2=3$. 
	An inspection of the table from the proof of Proposition~\ref{P7} 
	shows that the only  nilpotent Lie algebras $\gg_2$ with $\dim\gg_2=5$, $\RRa(C^*(G_2))=1$, and $\ind\gg_2=3$ are $\ng_{5,4}$ and $\ng_{5, 5}$.
	But this is impossible: The group $N_{6, 15}$ is two-step nilpotent and of class $\Tc$, therefore the relative topology of
	$\widehat{N_{6,15}}\setminus [\ng_{6, 15}, \ng_{6, 15}]^\perp$ is Hausdorff, while this property is not shared by the complement of characters of any of the groups $A_1\times N_{5, 4}$ and 
	$A_1\times N_{5, 5}$, since suitable quotients of these groups are isomorphic to the filiform group $F_{4}= N_{4}$.

	Subcase 2: The Lie algebra $\gg$ is indecomposable, that is, there exist no Lie algebras $\gg_1$ and $\gg_2$ 
	with $\gg=\gg_1\times\gg_2$ and $\dim\gg_j\ge 1$ for $j=1,2$. 
	Since $\dim\gg=6$, it then follows that $\gg$ is one of the 24 Lie algebras labeled as N6N1, N6N2,\dots N6N24 in \cite{Pe88}. 
	Since we already established that $\ind\gg=4$ and $\dim[\gg, \gg]^\perp=3$, it then easily follows by a direct inspection that either $\gg=\ng_{6,15}$ or $\gg=\ng_{6,18}$, where $\ng_{6,18}$ is the Lie algebra defined by a basis $X_1,X_2,X_3,X_4,X_5,X_6$ satisfying the commutation relations $$[X_6,X_5]=X_3,\ [X_6,X_4]=X_2,\ [X_6,X_3]=X_1.$$
	To complete the proof, we must show that,  assuming $\gg=\ng_{6,18}$, one obtains a contradiction. 
	In fact, if we define $\hg:=\RR X_2 +\RR X_4$, then $\hg$ is an ideal of $\ng_{6,18}$ for which the quotient $\ng_{6,18}/\hg$ is isomorphic to the 4-dimensional filiform Lie algebra~$\ng_4$  that occurs in Proposition~\ref{5D_classif}. 
	Therefore the unitary dual $\widehat{N_4}$ is homeomorphic to a closed subset of $\widehat{N_{6,18}}$ via a homeomorphism that takes the characters $[\ng_4, \ng_4]^\perp$ of $N_4$ to characters 
	$[\ng_{6,18}, \ng_{6,18}]^\perp$ of 
	$N_{6,18}$. 
	(See for instance \cite[Lemme 3]{Dix60}.) 
	The relative topology in $\widehat{N_4}$ of 
	$\widehat{N_4} \setminus [\ng_4, \ng_4]^\perp$ 
	is not Hausdorff 
	by \cite[Prop. 2 3]{Dix60}. 
	Hence  the relative topology in $\widehat{N_{6,18}}$ 
	of $\widehat{N_{6, 18}} \setminus [\ng_{6,18}, \ng_{6,18}]^\perp$  is not Hausdorff. 
	
	On the other hand, since $N_{6,15}$ is a 2-step nilpotent Lie groups, it follows that all its coadjoint orbits are flat. 
	Since the coadjoint orbits of $N_{6,15}$ have dimensions $\le 2$, it then follows by \cite[Lemma 6.8(1)]{BBL17} that the relative topology in $\widehat{N_{6,15}}$ 
	of  $\widehat{N_{6, 15}} \setminus [\ng_{6,15}, \ng_{6,15}]^\perp$is Hausdorff. 
	Taking into account the above remarks on $N_{6,18}$, it then follows that $C^*(N_{6,15})$ and $C^*(N_{6,18})$ are not Morita equivalent. 
	Indeed, if these two $C^*$-algebras were Morita equivalent then, 
	by Proposition~\ref{P4}
	one obtains a homeomorphism from $\widehat{N_{6,15}}$ onto $\widehat{N_{6,18}}$ that takes the characters of $N_{6,15}$ onto the characters of $N_{6,18}$. 
	This homeomorphism will then map the complement of the characters of $N_{6,15}$ homeomorphically onto the the complement of the characters of $N_{6,18}$, which is a contradiction with the fact that one of these spaces is Hausdorff while the other is not, as established above. 
	
	The assumption $\gg=\ng_{6,18}$ thus leads to a contradiction, and then there remains the fact that $\gg=\ng_{6,15}$, which completes the proof. 
\end{proof}

	\begin{remark}\label{N6-18}
		\normalfont
	A by-product of the proof of Theorem~\ref{N6-15} is that the Lie group 
	$N_{6, 18}$ is stably $C^*$-rigid as well. 
	\end{remark}


\subsection*{Acknowledgment} 
We wish to thank the Referee for several remarks that helped us improve our manuscript.


\end{document}